\documentclass[a4paper,10pt]{article}
\usepackage[all]{xy}
\usepackage[T1]{fontenc}
\setcounter{tocdepth}{3}
\usepackage{graphicx}
\usepackage[utf8]{inputenc}
\usepackage{amsmath,amsthm}
\usepackage{amsfonts,amssymb}
\usepackage{bigdelim}
\usepackage{multirow}
\usepackage{enumerate}
\usepackage{float}
\usepackage{pstricks}
\usepackage[ae]{}
\usepackage{authblk}
\usepackage{pst-plot,pst-intersect,mathtools}
\usepackage{natbib}
\usepackage[draft]{pgf}
\usepackage{listings}
\usepackage{xcolor}
\sloppy




\addtolength{\hoffset}{-1.9cm} \addtolength{\textwidth}{1.9cm}
\addtolength{\voffset}{-1.2cm} \addtolength{\textheight}{1.2cm}

\newcommand{\E}{\mathbb{E}}
\newcommand{\N}{\mathbb{N}}

\newcommand{\R}{\mathbb{R}}

\newcommand{\Pb}{\mathbb{P}}
\newcommand{\tpr}{t^{\prime}}
\newcommand{\spr}{s^{\prime}}
\newcommand{\mt}{\mathbf{t}}
\newcommand{\ms}{\mathbf{s}}

\newcommand{\ve}{\varepsilon}

\newcommand{\wtb}{\widetilde{\beta}}

\def\={{\;\mathop{=}\limits^{\text{(law)}}\;}}

\newtheorem{theorem}{Theorem}[section]
\newtheorem{prop}[theorem]{Proposition}

\newtheorem{defi}[theorem]{Definition}
\newtheorem{corol}[theorem]{Corollary}

\theoremstyle{definition}
\newtheorem{rem}[theorem]{Remark}

\newtheorem{exa}[theorem]{Example}

\numberwithin{equation}{section}

\usepackage{fancyhdr}
\lhead{}
\chead{}
\rhead{}
\chead[]{}                                      
\rhead{}                                      
\lfoot{}                                      
\date{}

\usepackage[english]{babel}

\title{Mean residual life processes and associated submartingales}
\author[]{Antoine-Marie Bogso
}
\affil[]{University of Yaounde I, Department of Mathematics, P.O. Box 812 Yaounde, Cameroon\\ Emails: ambogso@uy1.uninet.cm, ambogso@gmail.com,\\ Phone: (+237)652620452}
\date{}

\begin{document}
 
\maketitle
\begin{abstract}We use Madan-Yor's argument to construct associated submartingales to a class of two-parameter processes that are ordered by the increasing convex dominance. This class includes processes whose integrated survival functions are multivariate totally positive of order 2 (MTP$_2$). We prove that the integrated survival function of an integrable two-parameter process is MTP$_2$ if and only if it is totally positive of order 2 (TP$_2$) in each pair of arguments when the remaining argument is fixed. This result can not be deduced from known results since there are several two-parameter processes whose integrated survival functions do not have interval support. Since the MTP$_2$ property is closed under several transformations, it allows to exhibit many other processes having the same total positivity property.\\   
\text{}
  \\
\textbf{keywords: }Cox-Hobson algorithm, Incomplete Markov processes, MRL ordering, Two-parameter submartingales, Total positivity.\\
 \textbf{subclass MSC:} 60E15, 60G44, 60J25, 32F17.
\end{abstract}

\section{Introduction}
\label{intro}
The connection between mean residual life (MRL) ordering and the martingale property originated to Madan and Yor \citep{MY02}. Using the Az\'ema-Yor solution to the Skorokhod embedding problem, these authors provided, for every constant-mean  single-parameter mean residual life (MRL) process, an explicit associated martingale, i.e. a martingale with the same one-dimensional marginals. They also exhibited many examples of  single-parameter MRL processes. Several other examples of single-parameter MRL processes may be found in \citep{Bo15,HPRY11,LYY13}. Since single-parameter MRL processes are ordered by the increasing convex dominance, the existence of an associated martingale to a given single-parameter constant-mean MRL process follows directly from a remarkable Kellerer's result which states that processes ordered by the increasing convex dominance have submartingale marginals. Moreover, this submartingale may be chosen Markovian. In particular, for single-parameter peacocks, i.e. for constant-mean single-parameter processes which are ordered by the increasing convex dominance, the Kellerer's theorem yields the existence of an associated Markovian martingale. But the Madan-Yor's argument does not apply to non-MRL peacocks. Therefore, other Skorokhod embedding solutions have been used to construct explicitly an associated martingale to a given single-parameter peacock (see e.g. \citep[Chapter 7]{HPRY11} and \citep{KTT17}). We refer to \citep{BCH17} and \citep{Ob04},  where numerous solutions to the Skorokhod embedding problem are presented. A motivation of the Skorokhod embedding approach to associate a martingale to a given peacock is the Dambis-Dubins-Schwarz theorem which states that every single-parameter martingale is a time-changed one-dimensional Brownian motion. The idea of Madan and Yor extends to the whole family of MRL processes. Indeed, as the MRL ordering is preserved by translation, the generalization of the Az\'ema-Yor embedding algorithm to non-centered target distributions due to Cox and Hobson \citep{CH06} yields an explicit associated submartingale to every single-parameter MRL process. Moreover, this submartingale is Markovian when the marginals of the MRL process have densities. In the case of two-parameter processes, the Madan-Yor argument is still valid. In particular, every two-parameter MRL process is associated to a two-parameter submartingale. On the contrary, there does not yet exist a counterpart of Kellerer's theorem for two-parameter processes. Recently, Juillet \citep{Ju16} exhibited counterexamples which show that Kellerer's theorem fails in the two-parameter case. He then answered an open question formulated in \citep{HPRY11} on the existence of an associated two-parameter martingale to a given two-parameter peacock. Juillet also provided a family of centered two-parameter peacocks for which there exists an associated two-parameter martingale. Precisely, he introduced the notion of {\it diatomic convex ordering} and constructed explicitly an associated martingale to every two-parameter process which is non-decreasing in the diatomic convex ordering. However, one may observe that centered diatomic convex ordered processes are convex combinations of centered diatomic MRL processes, and that Juillet exploits the following argument: since every centered diatomic MRL process is associated to a martingale measure, then a convex combination of centered diatomic MRL processes is associated to a martingale measure equals to a convex combination of the corresponding associated martingale measures. We apply this idea to other two-parameter processes. Precisely, we provide new families of two-parameter MRL processes, and then we construct explicitly associated submartingales to several non-MRL processes. Among MRL processes, there are processes with multivariate totally positive of order 2 (MTP$_2$) integrated survival functions (see Paragraph \ref{ssec:MTP2} for the definiton of the MTP$_2$  property). We prove and apply the following result: the integrated survival function of an integrable two-parameter process is MTP$_2$ if and only if it is totally positive of order 2 (TP$_2$) in each pair of arguments when the remaining argument is fixed. According to \citep[Proposition 2.1]{KaR80} (see also \citep[Proposition 3.5]{Fal17}), such a result holds for nonnegative functions which have interval support. We mention that the terminology ``{\it having interval support }'' is borrowed from \citep[Page 7]{Fal17}. Note that several integrated survival functions do not have interval support and, as a consequence, the result of Karlin and Rinot does not apply. The MTP$_2$ property of the integrated survival functions of certain two-parameter processes allows to generate many other processes with MTP$_2$ integrated survival functons. Indeed, the MTP$_2$ property is closed under several convex transformations.  

\subsection{Two-parameter mean residual life processes}
We define a concept of two-parameter mean residual life (MRL) process, and, using Madan-Yor's argument, we show that the Cox-Hobson algorithm provides an associated submartingale to a given two-parameter MRL process.

Let $\R_+^2$ denote the first quadrant of the coordinate plane. We endow $\R_+^2$ with the following usual partial ordering: for every $\ms=(s,\spr)$ and $\mt=(t,\tpr)$ in $\R_+^2$, $\ms\leq\mt$ means $s\leq t$ and $\spr\leq\tpr$. 
\begin{defi}
We call two-parameter mean residual life (MRL) process a family of integrable probability measures $\left(\mu_{\mt},\mt\in\R_+^2\right)$ such that the corresponding family of Hardy-Littlewood functions $\left(\Psi_{\mu_\mt},\mt\in\R_+^2\right)$ defined on the real line $\R$ by
$$
\Psi_{\mu_\mt}(x)=\left\{
\begin{array}{ll}
\dfrac{1}{\mu_\mt([x,+\infty[)}\displaystyle\int_{[x,+\infty[}y\mu_\mt(dy)&\text{if }x<r_{\mu_\mt},\\
x&\text{if }x\geq r_{\mu_\mt},
\end{array}
\right.
$$
where $r_{\mu_\mt}=\inf\{z\in\R;\,\mu_\mt([z,+\infty[)=0\}$, is pointwise non-decreasing, i.e. for every $x\in\R$ and every $\ms\leq\mt$, $\Psi_{\mu_\ms}(x)\leq\Psi_{\mu_\mt}(x)$.
\end{defi}
\begin{rem}\label{rem:MRLPsiRLmu}\text{}
\item[1.]Let $\Psi_{\nu}$ denote the Hardy-Littlewood function of an integrable probability measure $\nu$. Then $\Psi_{\nu}$ is left-continuous, non-decreasing and, for every $x\in\R$, $\Psi_{\nu}(x)\geq x$.
Moreover, one has $\lim\limits_{x\to-\infty}\Psi_{\nu}(x)=\int_{\R}y\nu(dy)$. As a consequence, if $\left(\mu_\mt,\mt\in\R_+^2\right)$ is a MRL process, then $\mt\longmapsto\int_{\R}y\mu_\mt(dy)$ is non-decreasing.
\item[2.]Let $\left(\mu_\mt,\mt\in\R_+^2\right)$ be a family of integrable probability measures. For every $\mt\in\R_+^2$, let $r_{\mu_\mt}$  denote the upper bound of the support of $\mu_\mt$. Precisely,
$$
r_{\mu_\mt}=\inf\{x\in\R;\,\mu_\mt([x,+\infty[)=0\}.
$$
Observe that $r_{\mu_\mt}$ rewrites as follows (see e.g. \citep[Chapter VI, Lemma 5.1]{ReY99}):
$$
r_{\mu_\mt}=\inf\{x\in\R;\,\Psi_{\mu_\mt}(x)=x\}.
$$
If $\left(\mu_\mt,\mt\in\R_+^2\right)$ is MRL ordered, then $\mt\longmapsto r_{\mu_\mt}$ is non-decreasing. Indeed, for $\ms\leq\mt$, if $r_{\mu_\mt}=+\infty$, then $r_{\mu_\ms}\leq r_{\mu_\mt}$; otherwise, one has 
$$
r_{\mu_\mt}\leq\Psi_{\mu_\ms}(r_{\mu_\mt})\leq\Psi_{\mu_\mt}(r_{\mu_\mt})=r_{\mu_{\mt}}
$$
from which we deduce that $r_{\mu_\ms}\leq r_{\mu_\mt}$.
\end{rem}
In the two-parameter case, the MRL ordering is related to the following submartingale property (see e.g. \citep[Section 1]{Mi83}).
\begin{defi}
Let $(\Omega,\mathcal{F},\Pb)$ be a complete probability space, and let $(\mathcal{F}_\mt,\mt\in\R_+^2)$ be a filtration of $\mathcal{F}$, i.e. a family of non-decreasing sub-sigma-algebras of $\mathcal{F}$. A process $\left(X_\mt,\mt\in\R_+^2\right)$ is said to be $(\mathcal{F}_\mt)$-adapted if $X_\mt$ is $\mathcal{F}_\mt$-measurable for every $\mt\in\R_+^2$. An integrable $(\mathcal{F}_\mt)$-adapted process $\left(X_\mt,\mt\in\R_+^2\right)$ is a $(\mathcal{F}_\mt)$-submartingale if, for every $\ms\leq\mt$, $X_\ms\leq\E[X_\mt\vert\mathcal{F}_\ms]$. 
\end{defi}
\begin{rem}
Let $\left(X_\mt,\mt\in\R_+^2\right)$ be an integrable and adapted process with respect to some filtration $\left(\mathcal{F}_\mt,\mt\in\R_+^2\right)$.
\item[1.]If $\left(X_\mt,\mt\in\R_+^2\right)$ is a $(\mathcal{F}_\mt)$-submartingale with a constant mean, then it is a $(\mathcal{F}_\mt)$-martingale, i.e., for every $\ms\leq\mt$, $\E[X_\mt\vert\mathcal{F}_\ms]=X_\ms$.
\item[2.]Suppose that $\left(\mathcal{F}_\mt,\mt\in\R_+^2\right)$ is the natural filtration of $\left(X_\mt,\mt\in\R_+^2\right)$. Then $\left(X_\mt,\mt\in\R_+^2\right)$ is a $(\mathcal{F}_\mt)$-submartingale if, and only if, for every positive integer $n$, every $\ms_1\leq\cdots\leq\ms_n\leq\ms\leq\mt$ element of $\R_+^2$ and every continuous bounded function $\Phi:\,\R^{n+1}\to\R$,
$$
\E[\Phi(X_{\ms_1},\cdots,X_{\ms_n},X_\ms)(X_\mt-X_\ms)]\geq0.
$$
\end{rem}

\subsection{The Cox-Hobson algorithm and the mean residual life ordering}
A connection between the MRL ordering and the submartingale property is deduced from the Cox-Hobson algorithm which extends that of Azéma and Yor (see \citep{AY79}). Let $\left(\mu_\mt,\mt\in\R_+^2\right)$ be a family of integrable probability measures such that, for every $\mt\in\R_+^2$, $m_{\mu_\mt}:=\int_\R y\mu_\mt(dy)>0$, and let $(B_v,v\geq0)$ be a standard Brownian motion started at $0$. The Cox-Hobson algorithm provides a family of stopping times $\left(T^{}_{\mu_\mt},\mt\in\R_+^2\right)$ such that, for every $\mt\in\R_+^2$,
\begin{enumerate}
\item[C1.]$\left(B^+_{T^{}_{\mu_\mt}\wedge v},v\geq0\right)$ is uniformly integrable,
\item[C2.]$B_{T^{}_{\mu_\mt}}$ has law $\mu_\mt$.
\end{enumerate}
Precisely, for a fixed $\mt$, consider the convex function $\pi_{\mu_\mt}$ given by:
$$
\forall\,x\in\R,\quad\pi_{\mu_\mt}(x)=\int_\R|y-x|\mu_\mt(dy)+m_{\mu_\mt}.
$$
For every $\theta\in[-1,1]$, define
$$
u_{\mu_\mt}(\theta)=\inf\{y\in\R:\,\pi_{\mu_\mt}(y)+\theta(x-y)\leq\pi_{\mu_\mt}(x),\text{ for every }x\in\R\}.
$$
and
$$
\forall\,\theta\in[-1,1],\quad z_{\mu_\mt}(\theta)=\frac{\pi_{\mu_\mt}(u_{\mu_\mt}(\theta))-\theta u_{\mu_\mt}(\theta)}{1-\theta}.
$$
Define also the function $b_{\mu_\mt}$ by:
$$
\forall\,\alpha\in\R_+,\quad b_{\mu_\mt}(\alpha)=u_{\mu_\mt}\left(z^{-1}_{\mu_\mt}(\alpha)\right),
$$
where
$$
z^{-1}_{\mu_\mt}(\alpha)=\inf\{\theta\in[-1,1]:\,z_{\mu_\mt}(\theta)\geq\alpha\}.
$$
\begin{figure}


\ifx\JPicScale\undefined\def\JPicScale{.5}\fi
\psset{xunit=.4cm,yunit=.4cm}
\psset{dotsize=0.7 2.5,dotscale=1 1,fillcolor=black}
\begin{pspicture}(-16,-2)(16,15)
\psline[linewidth=1pt,arrowinset=0]{->}(-15,0)(15,0)
\psline[linewidth=1pt,arrowinset=0]{->}(0,-1)(0,14)
\psline[linewidth=0.8pt](0,0)(14,12)
\psline[linewidth=0.8pt](0,0)(-14,12)
\psline[linewidth=0.6pt, linestyle=dashed](2,0)(-12,12)
\psline[linewidth=1pt](-11.5,11.87) (-10,10.56) 
\pscurve[linewidth=1pt](-10,10.56)(0,4.7) (12,10.6)  
\psline[linewidth=1pt](12,10.6) (13.3,11.65)
\psline[linewidth=0.6pt](-6,6.31)(3.85,3.3)
\psline[linewidth=0.6pt, linestyle=dashed](3.85,0)(3.85,3.3)
\psline[linewidth=0.6pt, linestyle=dashed](-1.5,0)(-1.5,5)
\uput[dr](3.85,0){$z$}
\uput[d](-1.5,0){$b_{\mu_\mt}(z)$}
\uput[d](2,0){$2m_{\mu_\mt}$}
\uput[d](15,0){$x$}
\uput[dl](0,14){$\pi_{\mu_\mt}(x)$}
\end{pspicture}
\caption{$\pi_{\mu_\mt}$ for a $\mu_\mt$ with positive mean and such that $\text{supp}(\mu_\mt)=\R$.}
\label{pict:MTP2MRL}
\end{figure}
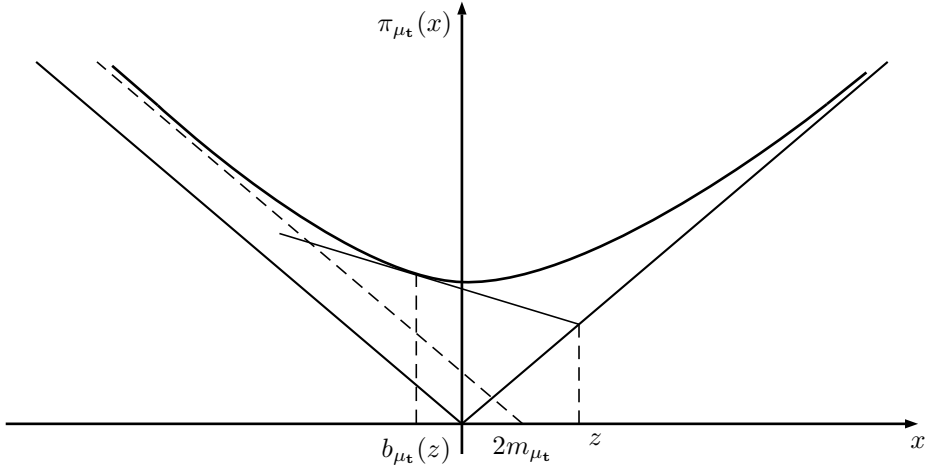
We refer to Figure \ref{pict:MTP2MRL} for a pictorial representation of the function $\pi_{\mu_\mt}$ and of the image $b_{\mu_\mt}(z)$ of a point $z\in\R_+$ under $b_{\mu_\mt}$. Let $T_{\mu_\mt}$ be the stopping time given by:
$$
T_{\mu_\mt}=\inf\{v\geq0:\,b_{\mu_\mt}(S_v)\geq B_v\}
=\inf\{v\geq0:\,S_v\geq b^{-1}_{\mu_\mt}(B_v)\},
$$
where $S_v=\sup_{w\leq v}B_w$ and where, for every $y\in\R$, $b_{\mu_\mt}^{-1}(y)=\inf\{z\in\R_+:\,b_{\mu_\mt}(z)\geq y\}$.
Cox and Hobson proved that $T_{\mu_\mt}$ satisfies Conditions C1 and C2 (see \citep[Theorem 12]{CH06}). They also established that Condition C1 is equivalent to the assertion that $T_{\mu_\mt}$ is minimal for $(B_v,v\geq0)$ in the sense that if there is a stopping time $R\leq T_{\mu_\mt}$ which embeds $\mu_\mt$, i.e. such that $B_R$ has law $\mu_\mt$, then $R=T_{\mu_\mt}$ a.s. Moreover, by Theorem 12 in \citep{CH06}, $T_{\mu_\mt}$ is optimal in the sense that $T_{\mu_\mt}$ maximizes $\Pb(S_R\geq x)$ amongst all minimal stopping times $R$ embedding $\mu_\mt$, uniformly in $x$. We mention that Beiglb{\"o}ck, Cox and Huesmann \citep{BCH17} develop recently a transport-based approach to the Skorokhod embedding problem which allows them to derive all known and a variety of new optimal solutions.
\noindent
One may observe that $T_{\mu_\mt}$ rewrites in terms of $\Psi_{\mu_\mt}$. Indeed, for every $x\in\R$,
$$
u^{-1}_{\mu_\mt}(x)=\inf\{\theta\in[-1,1]:\,u_{\mu_\mt}(\theta)\geq x\}=\partial\pi_{\mu_\mt}(x)=1-2\mu_\mt([x,+\infty[),
$$
where $\partial\pi_{\mu_\mt}$ denotes the left-derivative of $\pi_{\mu_\mt}$, and
$$
\pi_{\mu_\mt}\left(u_{\mu_\mt}\left(u^{-1}_{\mu_\mt}(x)\right)\right)-u^{-1}_{\mu_\mt}(x)u_{\mu_\mt}\left(u^{-1}_{\mu_\mt}(x)\right)=\pi_{\mu_\mt}(x)-xu^{-1}_{\mu_\mt}(x).
$$
Then, as in \citep[Section 3.2]{Cox04}, one may prove that
$$
b^{-1}_{\mu_\mt}(x)=z_{\mu_\mt}\left(u^{-1}_{\mu_\mt}(x)\right)=\Psi_{\mu_\mt}(x).
$$
Note that C1 is also equivalent to $\E\left[B_{T_{\mu_\mt}}|\mathcal{F}_R\right]\geq B_R$ for all stopping times $R\leq T_{\mu_\mt}$. In particular, if the family $\left(T_{\mu_\mt},\mt\in\R_+^2\right)$ is non-decreasing a.s., then, for every $\ms\leq\mt$, $\E\left[B_{T_{\mu_\mt}}|\mathcal{F}_{T_{\mu_\ms}}\right]\geq B_{T_{\mu_\ms}}$ which means that $\left(B_{T_{\mu_\mt}},\mt\in\R_+^2\right)$ is a submartingale, and, according to C2, this submartingale has marginals $\left(\mu_\mt,\mt\in\R_+^2\right)$. Moreover, if, for every $\mt\in\R_+^2$, $\mu_\mt$ has density, then, using Madan-Yor's argument, one may show that $\left(B_{T_{\mu_\mt}},\mt\in\R_+^2\right)$ is an incomplete Markov process in the sense that, for every $\ms_0<\ms_1<\cdots<\ms_n<\mt$ ($n\in\N^\ast$) elements of $\R_+^2$, every real numbers $x_0,x_1,\cdots,x_n$ and every Borel subset $A$ of $\R$,
$$
\Pb\left(\left.B_{T_{\mu_\mt}}\in A\right|B_{T_{\mu_{\ms_0}}}=x_0,B_{T_{\mu_{\ms_1}}}=x_1,\cdots, B_{T_{\mu_{\ms_n}}}=x_n\right) 
= \Pb\left(\left.B_{T_{\mu_\mt}}\in A\right|B_{T_{\mu_{\ms_n}}}=x_n\right).
$$
We refer to \citep{Ra83} (see also \citep{JLu92}) for the definition and some properties of incomplete Markov processes.
In the case where there is a measure $\mu_\ms$ with atoms, the epigraph of $\Psi_{\mu_\ms}$ admits vertical slopes, and then, conditionally on $\mathcal{F}_{T_{\mu_\ms}}$, any future time $T_{\mu_\mt}$, where $\ms\leq\mt$, depend on both $B_{T_{\mu_\ms}}$ and $S_{T_{\mu_\ms}}$ which implies that $\left(B_{T_{\mu_\mt}},\mt\in\R_+^2\right)$ does not enjoy necessarily the incomplete Markov property.
On the other hand, since $T_{\mu_\mt}=\inf\{v\geq0:\,S_v\geq \Psi_{\mu_\mt}(B_v)\}$ is the first time the process $(B_v,S_v;v\geq0)$ hits the epigraph of $\Psi_{\mu_\mt}$, then the family $\left(T_{\mu_\mt},\mt\in\R_+^2\right)$ is non-decreasing a.s. if, and only if, for every $\ms\leq\mt$, the epigraph of $\Psi_{\mu_\mt}$ includes that of $\Psi_{\mu_\ms}$ which means that $\left(\mu_\mt,\mt\in\R_+^2\right)$ is a MRL process.
\begin{rem}\label{rem:MRLTwoDSubMart}
Even if there exists some $\mu_{\mt_0}$ with negative mean, the MRL ordering of $\left(\mu_\mt,\mt\in\R_+^2\right)$ is sufficient for the construction of an associated submartingale to $\left(\mu_\mt,\mt\in\R_+^2\right)$ using the Cox-Hobson algorithm. Indeed, if $m_0$ is a real number satisfying $m_0<\int_{\R}y\mu_{(0,0)}(dy)$, and if $g_{\#}\mu_\mt$ denotes the image of $\mu_\mt$ under $g:\,y\longmapsto y-m_0$, then
$\left(g_{\#}\mu_\mt,\mt\in\R_+^2\right)$ is still a MRL process, 
and, for every $\mt\in\R_+^2$, $\int_\R z\,g_{\#}\mu_\mt(dz)>0$. 
Precisely, for every $x\in\R$,
$$
\mt\longmapsto \Psi_{g_{\#}\mu_\mt}(x)=\Psi_{\mu_\mt}(x+m_0)-m_0\text{ is non-decreasing,}
$$
and, as $\mt\longmapsto\int_{\R}y\mu_\mt(dy)$ is non-decreasing (see Point 1 of Remark \ref{rem:MRLPsiRLmu}),
$$
\int_\R z\,g_{\#}\mu_\mt(dz)=\int_\R(y-m_0)\mu_\mt(dy)>\int_\R y\mu_\mt(dy)-\int_{\R}y\mu_{(0,0)}(dy)\geq0.
$$
Now, if $\left(B_v,v\geq0\right)$ is a standard Brownian motion issued from $0$, and if we denote by $T_{g_{\#}\mu_\mt}$ the Cox-Hobson stopping which embeds $g_{\#}\mu_\mt$, then $\left(B_{T_{g_{\#}\mu_\mt}}+m_0,\mt\in\R_+^2\right)$ is a submartingale associated to $\left(\mu_\mt,\mt\in\R_+^2\right)$.
\end{rem}
It follows from the preceding remark that MRL ordering is a sufficient condition for the existence of an associated submartingale to a given integrable two-parameter process. 

\subsection{Organization of the paper}
In Section 2, we recall the definition of MTP$_2$ function and discuss some multivariate total positivity properties of two-parameter MRL processes. We prove that the integrated survival function of an integrable two-parameter process is MTP$_2$ if, and only if it is TP$_2$ in each pair of variables when the remaining variable is fixed. In Section 3, we provide several examples of two-parameter MRL processes. In particular, we show that the MTP$_2$ property of certain two-parameter MRL processes is useful to generate other MRL processes with the same property. The last section is devoted to explicit construction of associated submartingales to certain non-MRL processes.

\section{Total positivity properties of MRL processes}
\subsection{Multivariate totally positive functions}\label{ssec:MTP2}
\begin{defi}
Let $n$ be an integer such that $n\geq2$ and let $\mathcal{I}_1,\mathcal{I}_2,\cdots,\mathcal{I}_n$ be $n$ totally ordered sets. A nonnegative function $f$ over $\mathcal{I}=\prod\limits_{i=1}^n\mathcal{I}_i$ is said to be multivariate totally positive of order 2 (MTP$_2$) if, for every $(\mathbf{x},\mathbf{y})\in\mathcal{I}\times\mathcal{I}$,
\begin{equation}\label{eq:MTP2}
f(\mathbf{x}\wedge\mathbf{y})f(\mathbf{x}\vee\mathbf{y})\geq f(\mathbf{x})f(\mathbf{y}),
\end{equation}
where $\mathbf{x}=(x_1,x_2,\cdots,x_n)$, $\mathbf{y}=(y_1,y_2,\cdots,y_n)$,
$$
\mathbf{x}\wedge\mathbf{y}=(x_1\wedge y_1,x_2\wedge y_2,\cdots,x_n\wedge y_n)=\left(\min\{x_1,y_1\},\min\{x_2,y_2\},\cdots,\min\{x_n,y_n\}\right)
$$
and
$$
\mathbf{x}\vee\mathbf{y}=(x_1\vee y_1,x_2\vee y_2,\cdots,x_n\vee y_n)=\left(\max\{x_1,y_1\},\max\{x_2,y_2\},\cdots,\max\{x_n,y_n\}\right).
$$
\end{defi}
\begin{rem}
\item[1.]When $n=2$, a nonnegative function $f$ satisfying (\ref{eq:MTP2}) is said to be {\it totally positive of order 2} (TP$_2$). We refer to \citep{Ka68} for examples and properties of totally positive functions.
\item[2.]Suppose that $n\geq3$. By definition, a MTP$_2$ function is TP$_2$ in each pair of variables when the remaining variables are kept constant. But the converse is false. When $n=3$, Kemperman \citep[Page 330]{Kem77} provided an example of function which is TP$_2$ in each pair of variables when the remaining variable is fixed, and which is not MTP$_2$. Karlin and Rinot \citep{KaR80} proved that the converse holds when we restrict ourselves to functions that have interval support. We prove that the result of Karlin and Rinot extends to the integrated survival functions of integrable two-parameter processes. We point out that these functions do not have necessarily interval support.
\end{rem}
Many examples of MTP$_2$ density functions have been exhibited by Karlin and Rinot \citep{KaR80}. These authors also provided several transformations that preserve the MTP$_2$ property. In particular, they proved the following composition formula which allows to generate other MTP$_2$ functions.
\begin{prop}\label{prop:CompoMTP2}\citep[Proposition 3.4]{KaR80}.
Let $n$, $m$, $l$ be positive integers. Let $\mathcal{I}=\prod\limits_{i=1}^n\mathcal{I}_i$, $\mathcal{J}=\prod\limits_{i=1}^m\mathcal{J}_i$, $\mathcal{K}=\prod\limits_{i=1}^l\mathcal{K}_i$, where  $\mathcal{I}_i$, $\mathcal{J}_i$ and $\mathcal{K}_i$ are totally ordered sets. Let $f$ be MTP$_2$ on $\mathcal{I}\times\mathcal{J}$ and $g$ be MTP$_2$ on $\mathcal{J}\times\mathcal{K}$. Define
$$
h(\mathbf{x},\mathbf{z})=\int_{\mathcal{J}}f(\mathbf{x},\mathbf{y})g(\mathbf{y},\mathbf{z})\varrho(d\mathbf{y}),
$$
where $\varrho=\varrho_1\times\cdots\times\varrho_m$ and $\varrho_i$ is a $\sigma$-finite positive measure on $\mathcal{J}_i$. Then $h$ is MTP$_2$ on $\mathcal{I}\times\mathcal{K}$.
\end{prop}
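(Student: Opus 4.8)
The plan is to deduce Proposition~\ref{prop:CompoMTP2} from two elementary closure properties of the class of MTP$_2$ functions. First, the pointwise product $(\mathbf{x},\mathbf{y},\mathbf{z})\mapsto f(\mathbf{x},\mathbf{y})g(\mathbf{y},\mathbf{z})$ is MTP$_2$ on $\mathcal{I}\times\mathcal{J}\times\mathcal{K}$. Second, integrating an MTP$_2$ function against a product measure over a block of its coordinates again produces an MTP$_2$ function. Granting these, $h(\mathbf{x},\mathbf{z})=\int_{\mathcal{J}}f(\mathbf{x},\mathbf{y})g(\mathbf{y},\mathbf{z})\varrho(d\mathbf{y})$ is MTP$_2$ on $\mathcal{I}\times\mathcal{K}$, which is the assertion. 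Throughout I would use that meet and join on a finite product of totally ordered sets act coordinatewise, so that the defining inequality (\ref{eq:MTP2}) is invariant under permutations of the coordinates, and I would read the conclusion on the set where the integral defining $h$ (and the intermediate functions below) is finite.

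The first property is a direct check of (\ref{eq:MTP2}): given points $(\mathbf{x}_i,\mathbf{y}_i,\mathbf{z}_i)$ for $i=1,2$, coordinatewiseness of the lattice operations gives $(\mathbf{x}_1\wedge\mathbf{x}_2,\mathbf{y}_1\wedge\mathbf{y}_2)=(\mathbf{x}_1,\mathbf{y}_1)\wedge(\mathbf{x}_2,\mathbf{y}_2)$ in $\mathcal{I}\times\mathcal{J}$, and likewise for the joins and for the corresponding pairs in $\mathcal{J}\times\mathcal{K}$, so the MTP$_2$ property of $f$ yields one inequality, that of $g$ another, and multiplying them is exactly (\ref{eq:MTP2}) for the product.

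For the second property I would write $\varrho=\varrho_1\times\cdots\times\varrho_m$ and, using Tonelli's theorem (nonnegative integrand, $\sigma$-finite $\varrho_i$), integrate out the coordinates $y_1,\dots,y_m$ one at a time; by the permutation-invariance just mentioned it is enough to treat a single integration and then iterate. Thus the crux is: if $\mathcal{T}$ is a totally ordered set carrying a $\sigma$-finite measure $\rho$, $\mathcal{L}$ a finite product of totally ordered sets, and $F$ is MTP$_2$ on $\mathcal{L}\times\mathcal{T}$, then $H(\mathbf{w}):=\int_{\mathcal{T}}F(\mathbf{w},t)\,\rho(dt)$ is MTP$_2$ on $\mathcal{L}$. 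To prove this, fix $\mathbf{u},\mathbf{v}\in\mathcal{L}$, put $\mathbf{a}=\mathbf{u}\wedge\mathbf{v}$ and $\mathbf{b}=\mathbf{u}\vee\mathbf{v}$, and set $\Delta_{\mathbf{p},\mathbf{q}}(s,t)=F(\mathbf{p},s)F(\mathbf{q},t)+F(\mathbf{p},t)F(\mathbf{q},s)$, which is symmetric in $s,t$; symmetrising the double integral gives $H(\mathbf{a})H(\mathbf{b})-H(\mathbf{u})H(\mathbf{v})=\tfrac12\iint[\Delta_{\mathbf{a},\mathbf{b}}(s,t)-\Delta_{\mathbf{u},\mathbf{v}}(s,t)]\,\rho(ds)\,\rho(dt)$, so it suffices to show $\Delta_{\mathbf{a},\mathbf{b}}(s,t)\geq\Delta_{\mathbf{u},\mathbf{v}}(s,t)$ for $s\leq t$. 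For such $s,t$ the four pairs $\{(\mathbf{u},s),(\mathbf{v},t)\}$, $\{(\mathbf{u},t),(\mathbf{v},s)\}$, $\{(\mathbf{u},s),(\mathbf{v},s)\}$, $\{(\mathbf{u},t),(\mathbf{v},t)\}$ each have meet and join of the form $(\mathbf{a},\cdot)$, $(\mathbf{b},\cdot)$; writing $x=F(\mathbf{a},s)F(\mathbf{b},t)$, $y=F(\mathbf{a},t)F(\mathbf{b},s)$, $p=F(\mathbf{u},s)F(\mathbf{v},t)$, $q=F(\mathbf{u},t)F(\mathbf{v},s)$, the MTP$_2$ property of $F$ applied to the first two pairs gives $x\geq p$ and $x\geq q$, and applied to the last two and multiplied gives $xy\geq pq$. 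If $x=0$ then $p=q=0$ and there is nothing to show; otherwise $y\geq pq/x$ and $\Delta_{\mathbf{a},\mathbf{b}}(s,t)-\Delta_{\mathbf{u},\mathbf{v}}(s,t)=x+y-p-q\geq x+pq/x-p-q=(x-p)(x-q)/x\geq0$ since $x\geq p$ and $x\geq q$. Applying this one-coordinate statement $m$ times completes the argument.

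I expect the one-coordinate marginalisation to be the only genuine obstacle: it is the multivariate lattice analogue of the classical basic composition formula (that the integrated product of two TP$_2$ kernels is again TP$_2$), and its proof rests on the two-point inequality $\Delta_{\mathbf{a},\mathbf{b}}\geq\Delta_{\mathbf{u},\mathbf{v}}$, which I have reduced to four applications of the MTP$_2$ property of $F$ together with the elementary estimate $x+pq/x\geq p+q$ valid whenever $x\geq\max(p,q)$. The other ingredients---the product computation, Tonelli's theorem to decompose $\int_{\mathcal{J}}$ and to symmetrise, permutation-invariance of (\ref{eq:MTP2}), and finiteness of the intermediate functions---are routine.
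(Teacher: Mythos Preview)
The paper does not give its own proof of this proposition: it is quoted verbatim from \citep[Proposition 3.4]{KaR80} and used as a black box. So there is nothing to compare against on the paper's side.

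Your argument is correct and is essentially the standard route taken in the cited reference: reduce to the two closure properties (products of MTP$_2$ functions are MTP$_2$; integrating out one totally ordered coordinate preserves MTP$_2$), and iterate the latter via Tonelli. Your one-coordinate marginalisation lemma is the heart of the matter, and the four applications of the MTP$_2$ inequality for $F$ together with the elementary bound $x+pq/x\geq p+q$ when $x\geq\max(p,q)$ do the job cleanly. The only points worth flagging are routine: measurability of the intermediate one-step integrals (needed to iterate Tonelli), and the convention that the MTP$_2$ inequality is read as $+\infty\geq\cdot$ whenever $h$ is infinite at the meet or the join. Neither is a gap in the mathematics.
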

In Section 3, we shall apply a special case of the above result to exhibit several MRL processes which have MTP$_2$ integrated survival functions.

\subsection{Integrated survival functions of MRL processes}
We give some total positivity properties of the integrated survival functions of MRL processes. Let $\mu=\left(\mu_\mt,\mt\in\R_+^2\right)$ be an integrable process, and let $C_\mu$ be its integrable survival function, defined by:
$$
\forall\,(\mt,x)\in\R_+^2\times\R,\quad C_\mu(\mt,x)=\int_{[x,+\infty[}(y-x)\mu_\mt(dy).
$$
\begin{rem}\label{rem:CMuSuppCond}
Let $r_{\mu_\mt}$ be the upper bound of the support of $\mu_\mt$ (see Point 2 of Remark \ref{rem:MRLPsiRLmu}). Then $C_\mu(\mt,x)=0$ if, and only if $r_{\mu_\mt}\leq x$.
\end{rem}
We recall that an integrable probability measure is entirely determined by its integrated survival function. This is the purpose of the next result, borrowed from \citep[Section 2]{HR12} (see also \citep[Theorem 1.5.10]{MS02}).
\begin{prop}
Let $\nu$ be an integrable probability measure and let $C_\nu$ denote the integrated survival function of $\nu$, i.e. the function defined on $\R$ by $C_\nu(x)=\int_{[x,+\infty[}(y-x)\mu_\mt(dy)$. Then $C_\nu$ enjoys the following properties:
\begin{enumerate}
\item[i)]$C_\nu$ is a convex, nonnegative function on $\R$,
\item[ii)]$\lim\limits_{x\to+\infty}C_\nu(x)=0$,
\item[iii)]there exists $l\in\R$ such that $\lim\limits_{x\to-\infty}C_\nu(x)+x=l$.
\end{enumerate}
Conversely, if a function $C$ satisfies the above three properties, then there exists a unique integrable probability measure $\nu$ such that $C_\nu=C$, i.e. $C$ is the integrated survival function of $\nu$. Precisely, $\nu$ is the second order derivative of $C$ in the sense of distributions, and $l=\int_\R y\nu(dy)$. 
\end{prop}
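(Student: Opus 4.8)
\medskip
\noindent\textbf{Proof proposal.} For the direct statement, the plan is to identify $C_\nu$ with a ``call function''. Since $(y-x)=(y-x)^+$ on $[x,+\infty[$ and vanishes elsewhere, one has, for $Y$ of law $\nu$,
$$
C_\nu(x)=\int_\R(y-x)^+\,\nu(dy)=\E\big[(Y-x)^+\big].
$$
Property (i) is then immediate, since $x\longmapsto(y-x)^+$ is convex and nonnegative for each fixed $y$, and both features survive integration against $\nu$. For (ii) and (iii) I would apply dominated convergence with the $\nu$-integrable majorant $|y|$: one has $(y-x)^+\leq|y|$ for $x\geq0$ and $(y-x)^+\to0$ pointwise as $x\to+\infty$, giving (ii); and from the parity $(y-x)^+=(x-y)^+ +(y-x)$ together with $\nu(\R)=1$ one gets $C_\nu(x)+x=\int_\R(x-y)^+\,\nu(dy)+\int_\R y\,\nu(dy)$, where $\int_\R(x-y)^+\,\nu(dy)\to0$ as $x\to-\infty$ (again by dominated convergence, using $(x-y)^+\leq|y|$ for $x\leq0$), so that $\lim\limits_{x\to-\infty}\big(C_\nu(x)+x\big)=\int_\R y\,\nu(dy)=:l\in\R$.

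For the converse, assume $C$ satisfies (i)--(iii). As $C$ is convex, its right-derivative $C'_+$ exists everywhere, is nondecreasing and right-continuous; I would define $\nu$ to be the Lebesgue--Stieltjes measure of $C'_+$ --- equivalently, the distributional second derivative of $C$ --- so that $\nu$ is a nonnegative Radon measure with $\nu(]a,b])=C'_+(b)-C'_+(a)$. The crux is to evaluate the limits of $C'_+$ at $\pm\infty$. At $+\infty$: if $C'_+(x_0)>0$ for some $x_0$, convexity forces $C(x)\geq C(x_0)+C'_+(x_0)(x-x_0)\to+\infty$, against (ii); hence $C'_+\leq0$, and being nondecreasing and bounded above it has a finite limit at $+\infty$, which must be $0$ (a strictly negative limit would again drive $C$ to $-\infty$). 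At $-\infty$: $C'_+$ being nondecreasing has a limit $\ell\in[-\infty,+\infty)$ there; using the chord bounds $C(x)\leq C(0)+\ell x$ when $\ell>-1$ and $C(x)\geq\text{const}-(1+\delta)x$ when $\ell\leq-1-\delta$, one sees that $\ell>-1$ makes $C(x)+x\to-\infty$ and $\ell<-1$ makes $C(x)+x\to+\infty$, each contradicting (iii); hence $\ell=-1$. Therefore $\nu(\R)=\lim\limits_{x\to+\infty}C'_+(x)-\lim\limits_{x\to-\infty}C'_+(x)=1$, so $\nu$ is a probability measure.

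It then remains to prove that $\nu$ is integrable, that $C_\nu=C$, and that $l=\int_\R y\,\nu(dy)$, which I would handle by Stieltjes integration by parts. On $]0,+\infty[$: $\int_{]0,b]}y\,dC'_+(y)=bC'_+(b)-C(b)+C(0)$, and a convexity estimate (from $bC'_+(2b)\geq C(2b)-C(b)\to0$ together with $C'_+\leq0$) gives $bC'_+(b)\to0$, whence $\int_{]0,+\infty[}y\,\nu(dy)=C(0)<\infty$. On $]-\infty,0]$ the same computation, $\int_{]a,0]}(-y)\,dC'_+(y)=a\big(C'_+(a)+1\big)-\big(C(a)+a\big)+C(0)$, is an $\infty-\infty$ indeterminate form, which I would tame with the identity $C'_+(a)+1=\nu(]-\infty,a])$ (obtained by letting $a'\to-\infty$ in $C'_+(a)-C'_+(a')=\nu(]a',a])$): this yields $-a\big(C'_+(a)+1\big)=\int_{]-\infty,a]}(-a)\,\nu(dy)\leq\int_{]-\infty,a]}(-y)\,\nu(dy)$, and feeding it back gives $\int_{]a,0]}(-y)\,dC'_+(y)\leq-\big(C(a)+a\big)+C(0)\to C(0)-l$, so $\int_{]-\infty,0]}(-y)\,\nu(dy)<\infty$ and $\nu$ is integrable; moreover $-a\big(C'_+(a)+1\big)\to0$, which then gives $\int_\R y\,\nu(dy)=C(0)-\big(C(0)-l\big)=l$. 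Finally, integrating by parts once more, $\int_{]x,b]}(y-x)\,dC'_+(y)=(b-x)C'_+(b)-C(b)+C(x)\to C(x)$ as $b\to+\infty$, so $C_\nu=C$; uniqueness is automatic, since $\partial_x^2(y-x)^+=\delta_y$ in the distributional sense forces $C_\nu''=\nu$, so $\nu$ is recovered from $C$.

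The step I expect to be the genuine obstacle is the $-\infty$ analysis in the converse: first, deducing $\lim\limits_{x\to-\infty}C'_+(x)=-1$ from the lone hypothesis that $C(x)+x$ converges (with no a priori control on slopes), and second, the integrability of $\nu$ near $-\infty$, where the naive integration by parts is indeterminate and only resolves after identifying $C'_+(a)+1$ with the tail mass $\nu(]-\infty,a])$. The remaining verifications are routine convex-analysis bookkeeping.
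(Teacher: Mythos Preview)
Your argument is correct. Note, however, that the paper does not supply its own proof of this proposition: it is quoted as a known characterization, with an explicit reference to \citep[Section 2]{HR12} and \citep[Theorem 1.5.10]{MS02}. So there is no ``paper's proof'' to compare against; what you have written is a self-contained verification of the cited result.

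A couple of minor comments on presentation. In the converse, the sentence ``feeding it back gives $\int_{]a,0]}(-y)\,dC'_+(y)\leq-\big(C(a)+a\big)+C(0)$'' is slightly misleading: that inequality follows simply from $a(C'_+(a)+1)\leq0$ (since $a<0$ and $C'_+(a)+1=\nu(]-\infty,a])\geq0$), not from the comparison with $\int_{]-\infty,a]}(-y)\,\nu(dy)$; the latter inequality is only needed afterwards, to show $-a(C'_+(a)+1)\to0$ once integrability is established. Also, at $+\infty$ the contradiction from a strictly negative limit of $C'_+$ is with the nonnegativity of $C$ (part of (i)), not with (ii) as your phrasing ``drive $C$ to $-\infty$'' might suggest. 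These are purely cosmetic; the logic is sound, including the two points you flagged as potential obstacles (the identification $\lim_{x\to-\infty}C'_+(x)=-1$ from (iii), and the resolution of the indeterminate form via $C'_+(a)+1=\nu(]-\infty,a])$).
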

We have the same characterization of MRL ordering in terms of integrated survival functions as in the one-parameter case and the proof follows the same lines as that of Theorem 3.3 in \citep{Bo15}.
\begin{theorem}\label{theo:MRLTPP2}
The family $\left(\mu_\mt,\mt\in\R_+^2\right)$ is non-decreasing in the MRL ordering if, and only if its integrated survival function $C_\mu$ satisfies:
\begin{equation}\label{eq:MRLTPP2}
\forall\,\mt_1\leq\mt_2,\,\forall\,x_1\leq x_2,\quad \det\left(
\begin{array}{cc}
C_\mu(\mt_1,x_1)&C_\mu(\mt_1,x_2)\\
C_\mu(\mt_2,x_1)&C_\mu(\mt_2,x_2)
\end{array}
\right)\geq0.
\end{equation}
\end{theorem}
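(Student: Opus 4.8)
The plan is to reformulate both conditions as the single pointwise statement ``$\Psi_{\mu_{\mt_1}}(x)\leq\Psi_{\mu_{\mt_2}}(x)$ for all $\mt_1\leq\mt_2$ and all $x$'' --- for the MRL ordering this is just its definition, and for \eqref{eq:MRLTPP2} it will be obtained through the elementary identity
$$
\Psi_{\mu_\mt}(x)=x+\frac{C_\mu(\mt,x)}{\mu_\mt([x,+\infty[)},\qquad x<r_{\mu_\mt},
$$
which is immediate from the definitions of $\Psi_{\mu_\mt}$ and $C_\mu$. By Remark \ref{rem:CMuSuppCond}, $C_\mu(\mt,x)>0$ exactly when $x<r_{\mu_\mt}$, and then $\mu_\mt([x,+\infty[)>0$ as well, so $\Psi_{\mu_\mt}(x)-x>0$ on $\{x<r_{\mu_\mt}\}$ while $\Psi_{\mu_\mt}(x)=x$ for $x\geq r_{\mu_\mt}$.

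First I would record that $\mt\longmapsto r_{\mu_\mt}$ is non-decreasing under either hypothesis: under the MRL ordering this is Point 2 of Remark \ref{rem:MRLPsiRLmu}, and under \eqref{eq:MRLTPP2} it follows because, if one had $C_\mu(\ms,x)>0$ and $C_\mu(\mt,x)=0$ for some $\ms\leq\mt$, then \eqref{eq:MRLTPP2} applied with $x_1\leq x_2=x$ would force $C_\mu(\mt,x_1)=0$ for every $x_1\leq x$, contradicting $C_\mu(\mt,x_1)\to+\infty$ as $x_1\to-\infty$. Hence, for $\mt_1\leq\mt_2$, both $C_\mu(\mt_1,\cdot)$ and $C_\mu(\mt_2,\cdot)$ are strictly positive on $J:=(-\infty,r_{\mu_{\mt_1}})$. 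A short case analysis (the cases $x_1,x_2\in J$; $x_1<r_{\mu_{\mt_1}}\leq x_2$; $r_{\mu_{\mt_1}}\leq x_1$) then shows that \eqref{eq:MRLTPP2} is equivalent to: for every $\mt_1\leq\mt_2$, the map $x\longmapsto C_\mu(\mt_1,x)/C_\mu(\mt_2,x)$ is non-increasing on $J$. Likewise, since $\Psi_{\mu_{\mt_1}}(x)=x\leq\Psi_{\mu_{\mt_2}}(x)$ automatically for $x\geq r_{\mu_{\mt_1}}$, the MRL ordering is equivalent to: for every $\mt_1\leq\mt_2$, $\Psi_{\mu_{\mt_1}}\leq\Psi_{\mu_{\mt_2}}$ on $J$.

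The heart of the argument is then the equivalence, for fixed $\mt_1\leq\mt_2$, between ``$C_\mu(\mt_1,\cdot)/C_\mu(\mt_2,\cdot)$ non-increasing on $J$'' and ``$\Psi_{\mu_{\mt_1}}\leq\Psi_{\mu_{\mt_2}}$ on $J$''. Here I would use that each $C_\mu(\mt_i,\cdot)$ is convex (as recalled in the proposition above), hence locally Lipschitz on $J$, bounded away from $0$ on compact subintervals of $J$, and differentiable off a countable set with derivative $-\mu_{\mt_i}([x,+\infty[)$ there; thus $\phi:=\log C_\mu(\mt_1,\cdot)-\log C_\mu(\mt_2,\cdot)$ is locally absolutely continuous on $J$ and, at almost every $x\in J$,
$$
\phi'(x)=\frac{\mu_{\mt_2}([x,+\infty[)}{C_\mu(\mt_2,x)}-\frac{\mu_{\mt_1}([x,+\infty[)}{C_\mu(\mt_1,x)}=\frac{1}{\Psi_{\mu_{\mt_2}}(x)-x}-\frac{1}{\Psi_{\mu_{\mt_1}}(x)-x}.
$$
Since $\Psi_{\mu_{\mt_i}}(x)-x>0$ on $J$, one has $\phi'(x)\leq0$ for a.e.\ $x\in J$ if and only if $\Psi_{\mu_{\mt_1}}(x)\leq\Psi_{\mu_{\mt_2}}(x)$ for a.e.\ $x\in J$. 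It then remains to promote these two ``almost everywhere'' statements to ``everywhere'': on one side, $\phi$ being locally absolutely continuous, $\phi'\leq0$ a.e.\ on $J$ is equivalent to $\phi$, hence the ratio, being non-increasing on $J$; on the other, $\Psi_{\mu_{\mt_1}}$ and $\Psi_{\mu_{\mt_2}}$ are non-decreasing and left-continuous (Point 1 of Remark \ref{rem:MRLPsiRLmu}), so a strict reverse inequality $\Psi_{\mu_{\mt_1}}(x_0)>\Psi_{\mu_{\mt_2}}(x_0)$ at some $x_0\in J$ would persist on a left-neighbourhood of $x_0$ of positive Lebesgue measure, contradicting the a.e.\ inequality. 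Chaining the three equivalences gives the theorem.

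The routine parts are the identity for $\Psi_{\mu_\mt}$, the monotonicity of $r_{\mu_\mt}$, and the boundary case analysis. The step requiring genuine care is the last one: the two passages between ``a.e.'' and ``everywhere'', and --- underlying the first of them --- the verification that $\phi$ is locally absolutely continuous, so that the sign of $\phi'$ really controls the monotonicity of $\phi$. This is exactly where the convexity of $C_\mu(\mt,\cdot)$ and the left-continuity of $\Psi_{\mu_\mt}$ enter; everything else is bookkeeping, as in the proof of Theorem 3.3 in \citep{Bo15}.
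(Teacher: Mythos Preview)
Your argument is correct and is precisely the kind of proof the paper is pointing to: the paper gives no detailed proof of this theorem, only the remark that ``the proof follows the same lines as that of Theorem 3.3 in \citep{Bo15}'', and your reduction --- via the identity $\Psi_{\mu_\mt}(x)=x+C_\mu(\mt,x)/\mu_\mt([x,+\infty[)$ and the monotonicity of the ratio $C_\mu(\mt_1,\cdot)/C_\mu(\mt_2,\cdot)$ --- is exactly that argument carried out in full. The care you take with the boundary cases (monotonicity of $r_{\mu_\mt}$ under either hypothesis, the case split according to the position of $x_1,x_2$ relative to $r_{\mu_{\mt_1}}$) and with the passage from a.e.\ to everywhere (local absolute continuity of $\phi$, left-continuity of the $\Psi_{\mu_{\mt_i}}$) is appropriate and complete.
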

\begin{rem}\label{rem:KemMRLTPP2}
Inequality (\ref{eq:MRLTPP2}) is equivalent to the following assertion:
\begin{equation}\label{eq:MTPP2} 
C_\mu\text{ is TP}_2\text{ in }(t,x)\text{ when }\tpr\text{ is fixed, and TP}_2\text{ in }(\tpr,x)\text{ when }t\text{ is fixed.}
\end{equation}
Such equivalence does not hold in general. Precisely, (\ref{eq:MRLTPP2}) strictly implies (\ref{eq:MTPP2}). Indeed, consider the following example borrowed from \citep[Page 330, Proof of (ii)]{Kem77}: Let $u$ and $v$ be two positive real numbers, and let $\phi:\,\R_+^3\to\R_+$ be defined by:
\begin{equation}\label{eq:CountexaKem}
\phi(t,\tpr,x)=\left\{
\begin{array}{cl}
u&\text{if }(t,\tpr,x)\in[0,1]\times[0,1]\times]1,2],\\
v&\text{if }(t,\tpr,x)\in]1,2]\times]1,2]\times[0,1],\\
0&\text{otherwise.}
\end{array}
\right.
\end{equation}
Then $\phi$ is TP$_2$ in $(t,x)$ when $\tpr$ is fixed, and TP$_2$ in $(\tpr,x)$ when $t$ is fixed (see e.g. \citep[Page 330, Proof of (ii)]{Kem77}). But, for every $(t_1,\tpr_1,x_1)\in[0,1]^3$ and $(t_2,\tpr_2,x_2)\in]1,2]^3$, 
$$
\phi(t_1,\tpr_1,x_1)\phi(t_2,\tpr_2,x_2)=0<uv=\phi(t_1,\tpr_1,x_2)\phi(t_2,\tpr_2,x_1)
$$
which shows that $\phi$ does not satisfies (\ref{eq:MRLTPP2}). To show that (\ref{eq:MTPP2}) implies (\ref{eq:MRLTPP2}) in the case of integrated survival functions, we apply a characterization of MRL ordering in the one-parameter case (see \citep[Theorem 3.3]{Bo15}). Let $\left(\mu_\mt,\mt\in\R_+^2\right)$ be an integrable process whose integrated survival function satisfies (\ref{eq:MTPP2}). Let $\mt_1=(t_1,\tpr_1)$ and $\mt_2=(t_2,\tpr_2)$ be such that $\mt_1\leq\mt_2$. Since $C_\mu$ is TP$_2$ in $(t,x)$ when $\tpr$ is fixed, we have $\mu_{\mt_1}=\mu_{(t_1,\tpr_1)}\leq_{mrl}\mu_{(t_2,\tpr_1)}$, i.e. $\mu_{(t_2,\tpr_1)}$ dominates $\mu_{\mt_1}$ in the MRL ordering, and, since $C_\mu$ is TP$_2$ in $(\tpr,x)$ when $t$ is fixed, we also have $\mu_{(t_2,\tpr_1)}\leq_{mrl}\mu_{(t_2,\tpr_2)}=\mu_{\mt_2}$. Then, as the MRL ordering is transitive, $\mu_{\mt_1}\leq_{mrl}\mu_{\mt_2}$ which means that $C_\mu$ satisfies (\ref{eq:MRLTPP2}).
\end{rem}

One may observe that the MRL ordering is not preserved by some convex transformations. For instance, if $\mu=\left(\mu_\mt,\mt\in\{1,2\}\times\R_+\right)$ is a MRL process, and if $\nu=\left(\nu_\mt,\mt\in[0,1]\times\R_+\right)$ is the process given by
$$
\forall\,\mt\in\R^2_+,\quad\nu_\mt=(1-t)\mu_{(1,\tpr)}+t\mu_{(2,\tpr)},
$$
then $\nu$ is not necessarily a MRL process. Indeed, the integrated survival function $C_\nu$ of $\nu$, defined on $[0,1]\times\R_+\times\R$ by
$$
C_\nu(\mt,x)=(1-t)C_\mu(1,\tpr,x)+tC_\mu(2,\tpr,x),
$$
where $C_\mu$ denotes the integrated survival function of $\mu$, is not always TP$_2$ in $(\tpr,x)$ when $t$ is fixed. We introduce a class of MRL processes which is closed by several convex transformations, and from which are generated other MRL processes that belong to the same class.
\begin{theorem}\label{theo:MRLMTP2Tail}
Let $\left(\mu_\mt,\mt\in\R_+^2\right)$ be an integrable process, and let $C_\mu$ denote its integrated survival function. If $C_\mu$ is TP$_2$ in every pair of variables when the remaining variable is fixed, then $C_\mu$ is MTP$_2$.
\end{theorem}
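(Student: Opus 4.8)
The plan is to use the special structure of integrated survival functions, namely Remark~\ref{rem:CMuSuppCond}, to make up for the lack of interval support, and then to run the usual Karlin--Rinot chaining of $2\times2$ determinantal inequalities. Write $\mathbf{a}=(\mt_a,x_a)=(t_a,\tpr_a,x_a)$ and $\mathbf{b}=(\mt_b,x_b)=(t_b,\tpr_b,x_b)$ for two points of $\R_+^2\times\R$. If $C_\mu(\mathbf{a})=0$ or $C_\mu(\mathbf{b})=0$, then \eqref{eq:MTP2} holds trivially for $C_\mu$ because $C_\mu\geq0$; so I may assume $C_\mu(\mathbf{a})>0$ and $C_\mu(\mathbf{b})>0$. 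Likewise, if $\mathbf{a}$ and $\mathbf{b}$ are comparable, or if they agree in one coordinate, then \eqref{eq:MTP2} is either an equality or a single instance of the TP$_2$ hypothesis of $C_\mu$ in the remaining pair of variables. Hence the only real case is the one in which $\mathbf{a}$ and $\mathbf{b}$ differ in every coordinate and are incomparable.

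The first step --- which I expect to be the crux --- is to show that the positivity set $D=\{(\mt,x)\in\R_+^2\times\R:\,C_\mu(\mt,x)>0\}$ is a sublattice of $\R_+^2\times\R$, and in fact enjoys the stronger stability properties that $(\mt,x)\in D$ and $\mt'\geq\mt$ imply $(\mt',x)\in D$, while $(\mt,x)\in D$ and $x'\leq x$ imply $(\mt,x')\in D$. Three ingredients enter here. First, by Remark~\ref{rem:CMuSuppCond}, $C_\mu(\mt,x)>0$ if and only if $x<r_{\mu_\mt}$, which gives the second stability property at once. Second, the hypothesis contains the two TP$_2$ conditions appearing in \eqref{eq:MTPP2}, so by Remark~\ref{rem:KemMRLTPP2} and Theorem~\ref{theo:MRLTPP2} the family $(\mu_\mt)$ is MRL, whence $\mt\longmapsto r_{\mu_\mt}$ is non-decreasing by Remark~\ref{rem:MRLPsiRLmu}; this gives the first stability property, and shows that $D$ is stable under $\vee$. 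Third, for each fixed $c\in\R$ the function $C_\mu(\cdot,\cdot,c)$ is TP$_2$ in $(t,\tpr)$, and a nonnegative TP$_2$ function of two variables has a sublattice positivity set (for two incomparable arguments at which the function is positive, the $2\times2$ TP$_2$ inequality forces the function to be positive at both their meet and their join). Combining the last two points, if $C_\mu(\mathbf{a})>0$ and $C_\mu(\mathbf{b})>0$, then, taking $c=x_a\wedge x_b$, both $\mt_a$ and $\mt_b$ belong to the sublattice $\{\mt:\,r_{\mu_\mt}>c\}$, hence so does $\mt_a\wedge\mt_b$, i.e.\ $C_\mu(\mathbf{a}\wedge\mathbf{b})>0$; together with stability under $\vee$ this yields $C_\mu(\mathbf{a}\vee\mathbf{b})>0$ as well, and the stability of $D$ under $\wedge$.

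The second step is the chaining itself. Up to exchanging $\mathbf{a}$ and $\mathbf{b}$ and exchanging the two coordinates $t$ and $\tpr$ --- operations that preserve both the hypotheses and \eqref{eq:MTP2} --- the incomparable case splits into two: either $x$ is the coordinate in which $\mathbf{a}$ and $\mathbf{b}$ are crossed, say $t_a>t_b$, $\tpr_a>\tpr_b$, $x_a<x_b$, or one of $t,\tpr$ is, say $t_a<t_b$, $\tpr_a>\tpr_b$, $x_a>x_b$. In the first case I would use the TP$_2$ property of $C_\mu$ in $(t,x)$ at $\tpr=\tpr_b$ fixed and in $(\tpr,x)$ at $t=t_a$ fixed; in the second, the TP$_2$ property in $(t,\tpr)$ at $x=x_b$ fixed and in $(t,x)$ at $\tpr=\tpr_a$ fixed. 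Each case produces two determinantal inequalities whose product reads $C_\mu(\mathbf{a}\wedge\mathbf{b})\,C_\mu(\mathbf{a}\vee\mathbf{b})\,C_\mu(\mathbf{p})\,C_\mu(\mathbf{q})\geq C_\mu(\mathbf{a})\,C_\mu(\mathbf{b})\,C_\mu(\mathbf{p})\,C_\mu(\mathbf{q})$ for two intermediate points $\mathbf{p},\mathbf{q}$ obtained from $\mathbf{a}$, $\mathbf{b}$ or $\mathbf{a}\wedge\mathbf{b}$ by raising an $\mt$-coordinate at fixed $x$ or by lowering $x$ at fixed $\mt$; by the stability properties of $D$ established in the first step, $C_\mu(\mathbf{p})$ and $C_\mu(\mathbf{q})$ are positive, so they may be cancelled, leaving exactly \eqref{eq:MTP2} for $C_\mu$. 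The classical argument (as for functions with interval support) is what happens in this second step; the genuinely new point is the first step, where the monotonicity of $\mt\mapsto r_{\mu_\mt}$ together with the TP$_2$ property in $(t,\tpr)$ are used precisely to keep the intermediate points inside the positivity region even though $C_\mu$ need not have interval support.
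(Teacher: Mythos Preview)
Your proposal is correct and follows essentially the same route as the paper's proof: both identify that the positivity set $\{C_\mu>0\}=\{x<r_{\mu_\mt}\}$ with $\mt\mapsto r_{\mu_\mt}$ non-decreasing (via Remarks~\ref{rem:MRLPsiRLmu}, \ref{rem:CMuSuppCond}, \ref{rem:KemMRLTPP2}) is what replaces interval support, and then run a Karlin--Rinot chaining of two $2\times2$ determinantal inequalities through an intermediate point that is kept positive by this structure. The only differences are organizational: the paper handles zeros contrapositively (showing LHS $=0$ forces RHS $=0$) instead of first proving $D$ is a sublattice, and it avoids your case split by assuming WLOG $x_1\le x_2$ and using the packaged inequality~\eqref{eq:MRLTPP2} (between the comparable points $\mt_2\le\mt_1\vee\mt_2$) together with TP$_2$ in $(t,\tpr)$, dividing by the single intermediate value $C_\mu(\mt_2,x_1)$.
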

\begin{rem}
Theorem \ref{theo:MRLMTP2Tail} is not valid for all nonnegative functions. We know from \citep{Lo53} (see also \citep{Kem77} and \citep{KaR80}) that such a result holds for positive functions. Karlin and Rinot \citep[Proposition 2.1]{KaR80} proved that Theorem \ref{theo:MRLMTP2Tail} still applies to nonnegative functions which have interval support (see also \citep[Proposition 3.5]{Fal17} ): as in \citep[Page 7]{Fal17}, we say that $\phi:\,\R_+^2\times\R\to\R_+$ {\it has interval support} if, for every $(\mt_1,x_1)$ and $(\mt_2,x_2)$ in $\R_+^2\times\R$, $\phi(\mt_1,x_1)\phi(\mt_2,x_2)>0$ implies $\phi(\mt,x)>0$ for every $(\mt,x)\in\R_+^2\times\R$ satisfying $(\mt_1\wedge\mt_2,x_1\wedge x_2)\leq (\mt,x)\leq(\mt_1\vee\mt_2,x_1\vee x_2)$. But as it is shown in Propositions \ref{prop:TheoMRLMTP2} and \ref{prop:ExaMTP2ModifNu} below, there are several integrated survival functions which do not have interval support.
\end{rem}
\begin{proof}[Proof of Theorem \ref{theo:MRLMTP2Tail}.]
Let $C_\mu$ be the integrated survival function of an integrable process $\left(\mu_\mt,\mt\in\R_+^2\right)$ which is TP$_2$ in each pair of variables when the remaining variable is fixed. For every $\mt\in\R_+^2$, we denote by $r_{\mu_\mt}$ the upper bound of the support of $\mu_\mt$. Let $\mt_1$ and $\mt_2$ be elements of $\R_+^2$, and let $x_1$ and $x_2$ be two real numbers. We suppose without loss of generality that $x_1\leq x_2$. We wish to prove the following inequality.
\begin{equation}\label{eq:ProofMRLMTP2a}
C_\mu(\mt_1\wedge\mt_2,x_1)C_\mu(\mt_1\vee\mt_2,x_2)\geq C_\mu(\mt_1,x_1)C_\mu(\mt_2,x_2).
\end{equation}
We first show that the right-hand side of (\ref{eq:ProofMRLMTP2a}) vanishes when the left-hand side equals zero. The left-hand side of (\ref{eq:ProofMRLMTP2a}) equals zero if, and only if at least one of $C_\mu(\mt_1\wedge\mt_2,x_1)$ and $C_\mu(\mt_1\vee\mt_2,x_2)$ equals zero.  
\begin{itemize}
\item If $C_\mu(\mt_1\vee\mt_2,x_2)=0$, then, by Remark \ref{rem:CMuSuppCond}, $r_{\mu_{\mt_1\vee\mt_2}}\leq x_2$. Moreover, since $C_\mu$ is TP$_2$ in $(t,x)$ when $\tpr$ is fixed, and TP$_2$ in $(\tpr,x)$ when $t$ is fixed, it follows from Remark \ref{rem:KemMRLTPP2} that $C_\mu$ satisfies Condition (\ref{eq:MRLTPP2}) in Theorem \ref{theo:MRLTPP2}. Hence $\left(\mu_\mt,\mt\in\R_+^2\right)$ is a MRL process, and we deduce from Point 2 of Remark \ref{rem:MRLPsiRLmu} that $\mt\longmapsto r_{\mu_\mt}$ is non-decreasing. Then $r_{\mu_{\mt_2}}\leq r_{\mu_{\mt_1\vee\mt_2}}\leq x_2$ which implies that $C_\mu(\mt_2,x_2)=0$ and, as a consequence, that the right-hand side of (\ref{eq:ProofMRLMTP2a}) equals zero.
\item Suppose that $C_\mu(\mt_1\wedge\mt_2,x_1)=0$. Because $C_\mu$ is TP$_2$ in $(t,\tpr)$ when $x$ is fixed,
\begin{equation}\label{eq:ProofMRLMTP2b}
C_\mu(\mt_1\wedge\mt_2,x_1)C_\mu(\mt_1\vee\mt_2,x_1)\geq C_\mu(\mt_1,x_1)C_\mu(\mt_2,x_1).
\end{equation} 
Since $C_\mu(\mt_1\wedge\mt_2,x_1)=0$, the right-hand side of (\ref{eq:ProofMRLMTP2b}) equals zero which implies that at least one of $C_\mu(\mt_1,x_1)$ and $C_\mu(\mt_2,x_1)$ equals zero. If $C_\mu(\mt_1,x_1)=0$, then the right-hand side of (\ref{eq:ProofMRLMTP2a}) equals zero. If $C_\mu(\mt_2,x_1)=0$, then $r_{\mu_{\mt_2}}\leq x_1\leq x_2$ which implies that $C_\mu(\mt_2,x_2)=0$. Thus, the right-hand side of (\ref{eq:ProofMRLMTP2a}) equals zero.
\end{itemize} 
Now, suppose that the left-hand side of (\ref{eq:ProofMRLMTP2a}) is positive. In particular $C_\mu(\mt_1\wedge\mt_2,x_1)$ is positive and, as a consequence $x_1<r_{\mu_{\mt_1\wedge\mt_2}}$. Since $\mt\longmapsto r_{\mu_\mt}$ is non-decreasing, $x_1< r_{\mu_{\mt_1\wedge\mt_2}}\leq r_{\mu_{\mt_2}}$ which implies that $C_\mu(\mt_2,x_1)$ is positive too. Hence, we may write:
\begin{equation}\label{eq:MRLMTP2Proofa}
C_\mu(\mt_1\wedge\mt_2,x_1)C_\mu(\mt_1\vee\mt_2,x_2)=\frac{C_\mu(\mt_1\wedge\mt_2,x_1)}{C_\mu(\mt_2,x_1)}[C_\mu(\mt_2,x_1)C_\mu(\mt_1\vee\mt_2,x_2)].
\end{equation}
Because $C_\mu$ is TP$_2$ in $(t,x)$ when $\tpr$ is fixed, and TP$_2$ in $(\tpr,x)$ when $t$ is fixed, we deduce from Remark \ref{rem:KemMRLTPP2} and from (\ref{eq:MRLTPP2}) that
\begin{equation}\label{eq:MRLMTP2Proofb}
C_\mu(\mt_2,x_1)C_\mu(\mt_1\vee\mt_2,x_2)\geq C_\mu(\mt_2,x_2)C_\mu(\mt_1\vee\mt_2,x_1).
\end{equation}
Combining (\ref{eq:MRLMTP2Proofa}) and (\ref{eq:MRLMTP2Proofb}), we obtain
\begin{equation}\label{eq:MRLMTP2Proofc}
C_\mu(\mt_1\wedge\mt_2,x_1)C_\mu(\mt_1\vee\mt_2,x_2)\geq\frac{C_\mu(\mt_2,x_2)}{C_\mu(\mt_2,x_1)}[C_\mu(\mt_1\vee\mt_2,x_1)C_\mu(\mt_1\wedge\mt_2,x_1)].
\end{equation}
Moreover, since $C_\mu$ is TP$_2$ in $(t,\tpr)$ when $x$ is constant,
\begin{equation}\label{eq:MRLMTP2Proofd}
C_\mu(\mt_1\wedge\mt_2,x_1)C_\mu(\mt_1\vee\mt_2,x_1)\geq C_\mu(\mt_1,x_1)C_\mu(\mt_2,x_1).
\end{equation}
Then (\ref{eq:MRLMTP2Proofc}) and (\ref{eq:MRLMTP2Proofd}) yield
$$
C_\mu(\mt_1\wedge \mt_2,x_1)C_\mu(\mt_1\vee\mt_2,x_2)\geq C_\mu(\mt_1,x_1)C_\mu(\mt_2,x_2)
$$
which completes the proof. 
\end{proof}

In the next section, we exhibit many examples of MRL processes among which there are processes that possess MTP$_2$ integrated survival functions.

\section{Some examples of two-parameter MRL processes}
We provide several examples of MRL processes among which there are processes with MTP$_2$ integrated survival functions. In particular, the MTP$_2$ property of these processes is useful to generate other processes having MTP$_2$ integrated survival functions.

\subsection{A family of diatomic MRL processes}
Here is an example of a family of diatomic processes to which Theorem \ref{theo:MRLMTP2Tail} applies. 

\begin{prop}\label{prop:TheoMRLMTP2}
Let $\ve\in(0,1)$ and $r\in\R$. Let $\left(\mu_\mt^{\ve},\mt\in\R_+^2\right)$ be the process given by: for every $t\in\R_+$, $\mu_{(t,0)}^\ve=\mu_{(0,t)}^\ve=\delta_r$, and, for every $\mt=(t,\tpr)\in\R_+^\ast\times\R_+^\ast$,
$$
\mu_\mt^\ve=\frac{\tpr}{t+\tpr}\delta_{r-(1-\ve)t}+\frac{t}{t+\tpr}\delta_{r+\tpr},
$$
where $\R_+^\ast$ denotes the set of positive numbers, and, for every $a\in \R$, $\delta_a$ denotes the Dirac measure at point $a$. Then the integrated survival function $C_{\mu^\ve}$ of $\left(\mu^\ve_\mt,\mt\in\R_+^2\right)$ is TP$_2$ in each pair of variables when the remaining variable is held constant. 
\end{prop}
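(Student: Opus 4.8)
The plan is to compute $C_{\mu^\ve}$ in closed form and then check, one at a time, the three total positivity statements hidden in ``TP$_2$ in each pair of variables when the remaining variable is fixed'': TP$_2$ in $(t,x)$ with $\tpr$ fixed, TP$_2$ in $(\tpr,x)$ with $t$ fixed, and TP$_2$ in $(t,\tpr)$ with $x$ fixed. Since the MRL ordering and each of these three conditions are unaffected by a common translation of the $x$-variable, I may and will take $r=0$. For $t,\tpr>0$ the measure $\mu_{(t,\tpr)}^\ve$ sits on the two points $-(1-\ve)t<0<\tpr$ with masses $\tpr/(t+\tpr)$ and $t/(t+\tpr)$, so its mean is $\ve t\tpr/(t+\tpr)$, and integrating $(y-x)^+$ gives
$$
C_{\mu^\ve}((t,\tpr),x)=
\begin{cases}
\dfrac{\ve t\tpr}{t+\tpr}-x & \text{if }x\le-(1-\ve)t,\\
\dfrac{t}{t+\tpr}\,(\tpr-x) & \text{if }-(1-\ve)t\le x\le\tpr,\\
0 & \text{if }x\ge\tpr,
\end{cases}
$$
together with $C_{\mu^\ve}((t,0),x)=C_{\mu^\ve}((0,\tpr),x)=(-x)^+$. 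I would first note that the three branches agree on the lines $x=-(1-\ve)t$ and $x=\tpr$, and reduce to $(-x)^+$ when $t\downarrow0$ or $\tpr\downarrow0$; hence $C_{\mu^\ve}$ is continuous on $\R_+^2\times\R$, a fact needed later when gluing partial positivity statements.

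For the two conditions involving $x$ and one time-variable I would invoke the one-parameter characterization of the MRL ordering (\citep[Theorem 3.3]{Bo15}, recalled in Remark \ref{rem:KemMRLTPP2}): $C_{\mu^\ve}$ is TP$_2$ in $(t,x)$ with $\tpr$ fixed if and only if, for each fixed $\tpr$, the one-parameter family $(\mu^\ve_{(t,\tpr)})_{t\ge0}$ is non-decreasing in the MRL ordering, and likewise for $(\tpr,x)$ with $t$ fixed. To check the latter I would compute the Hardy--Littlewood functions: for $t,\tpr>0$,
$$
\Psi_{\mu^\ve_{(t,\tpr)}}(x)=
\begin{cases}
\ve t\tpr/(t+\tpr) & \text{if }x\le-(1-\ve)t,\\
\tpr & \text{if }-(1-\ve)t<x\le\tpr,\\
x & \text{if }x>\tpr,
\end{cases}
$$
with $\Psi_{\delta_0}(x)=\max(0,x)$ on the axes. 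Because $t\mapsto-(1-\ve)t$ decreases, $t\mapsto\ve t\tpr/(t+\tpr)$ increases, and $\ve t\tpr/(t+\tpr)\le\tpr$ (here $\ve<1$ is used), running through the finitely many positions of $x$ relative to the two breakpoints shows $x\mapsto\Psi_{\mu^\ve_{(t,\tpr)}}(x)$ is pointwise non-decreasing in $t$; for fixed $t$ the first breakpoint is now constant while $\tpr$ and the mean increase, and the same inspection gives monotonicity in $\tpr$. This disposes of the $(t,x)$ and $(\tpr,x)$ conditions.

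The condition in $(t,\tpr)$ with $x$ fixed is the substantive one, since it is strictly weaker than MRL ordering and cannot be obtained from the previous step (the example in Remark \ref{rem:KemMRLTPP2}). Fix $x$ and set $g(t,\tpr)=C_{\mu^\ve}((t,\tpr),x)$. If $x\ge0$ then the first branch is never active for $t,\tpr>0$, so $g(t,\tpr)=\frac{t}{t+\tpr}(\tpr-x)^+$; this is positive exactly on the product set $\{t>0\}\times\{\tpr>x\}$ and zero elsewhere, and on that set the TP$_2$ inequality reduces, after cancelling the evident positive factors, to $(t_1+\tpr_2)(t_2+\tpr_1)\ge(t_1+\tpr_1)(t_2+\tpr_2)$, whose difference equals $(t_1-t_2)(\tpr_1-\tpr_2)\ge0$; the product shape of the zero set handles all quadruples meeting it. If $x<0$, put $d=-x>0$ and $t^\ast=d/(1-\ve)$; then $g>0$ everywhere, $g=d+\ve\,t\tpr/(t+\tpr)$ for $t\le t^\ast$, $g=\frac{t}{t+\tpr}(\tpr+d)$ for $t\ge t^\ast$, and the two expressions agree at $t^\ast$. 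On $\{t\ge t^\ast\}$ the same cancellation as before gives TP$_2$. On $\{t\le t^\ast\}$ I would write $g=d+\ve h$ with $h(t,\tpr)=t\tpr/(t+\tpr)$ and expand
$$
g_{11}g_{22}-g_{12}g_{21}=d\ve\,(h_{11}+h_{22}-h_{12}-h_{21})+\ve^2\,(h_{11}h_{22}-h_{12}h_{21}),
$$
both parentheses being nonnegative since $h$ is supermodular (its mixed second derivative is $2t\tpr/(t+\tpr)^3\ge0$) and TP$_2$ (again by the $(t_1-t_2)(\tpr_1-\tpr_2)\ge0$ identity). Finally, for $t_1\le t^\ast\le t_2$ I would glue by transitivity of TP$_2$: multiply the TP$_2$ inequalities for the quadruples $\{(t_1,\tpr_i),(t^\ast,\tpr_i)\}_{i=1,2}$ and $\{(t^\ast,\tpr_i),(t_2,\tpr_i)\}_{i=1,2}$ and cancel $g(t^\ast,\tpr_1)g(t^\ast,\tpr_2)>0$.

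The main obstacle is exactly this last condition, and within it the crossing regime $t_1\le t^\ast\le t_2$, which is where the transitivity argument is indispensable; everything else is bookkeeping of the piecewise formula and of the degenerate cases $t=0$ or $\tpr=0$, where $g$ is either constant in one variable or has a product-shaped zero set, so TP$_2$ holds trivially. I would be careful, when gluing, that the branches of $C_{\mu^\ve}$ really match on their common boundaries (recorded in the first step), so that ``TP$_2$ on each closed piece plus transitivity'' does yield TP$_2$ on the whole domain. Putting the three verifications together proves that $C_{\mu^\ve}$ is TP$_2$ in every pair of arguments when the third is held constant, as claimed; by Theorem \ref{theo:MRLMTP2Tail} it then follows that $C_{\mu^\ve}$ is even MTP$_2$, though that is not part of the present statement.
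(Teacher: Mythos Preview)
Your proof is correct and follows essentially the same route as the paper: compute $C_{\mu^\ve}$ explicitly, establish the two TP$_2$ conditions involving $x$ by verifying the MRL ordering through the Hardy--Littlewood functions, and then check TP$_2$ in $(t,\tpr)$ at fixed $x$ by splitting according to which branch of the piecewise formula is active.

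The one noteworthy difference is how you treat the mixed regime $t_1\le t^\ast\le t_2$ (equivalently, the paper's interval $r-(1-\ve)t_2\le x<r-(1-\ve)t_1$). The paper carries out a direct algebraic verification there, bounding the cross terms using the TP$_2$ property of $(t,\tpr)\mapsto 1/(t+\tpr)$ and monotonicity of $t\mapsto t/(t+\tpr_2)$. You instead insert the intermediate point $t^\ast$ and multiply the two already-established TP$_2$ inequalities, cancelling $g(t^\ast,\tpr_1)g(t^\ast,\tpr_2)>0$; this standard ``transitivity'' trick for TP$_2$ along a totally ordered coordinate is cleaner and avoids the paper's computation entirely. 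The positivity needed for the cancellation is exactly what you recorded ($g>0$ everywhere when $x<0$), and the continuity of $C_{\mu^\ve}$ across the seams, which you noted up front, ensures the two closed pieces genuinely overlap at $t^\ast$. A minor remark: the bound $\ve\,t\tpr/(t+\tpr)\le\tpr$ you use in the Hardy--Littlewood step holds for all $\ve\le 1$ simply because $t/(t+\tpr)\le 1$, so the restriction $\ve<1$ is not actually what drives that inequality.
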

\begin{proof}
We first show that $C_{\mu^\ve}$ is TP$_2$ in $(t,x)$ when $\tpr$ is fixed, and TP$_2$ in $(\tpr,x)$ when $t$ is fixed. By Remark \ref{rem:KemMRLTPP2}, it suffices to prove that $\left(\mu^\ve_\mt,\mt\in\R_+^2\right)$ is a MRL process. For every $\mt\in\left(\R_+^\ast\right)^2=\R_+^\ast\times\R_+^\ast$, the Hardy-Littlewood function $\Psi_{\mu_\mt^\ve}$ of $\mu_\mt^\ve$ is defined by:
$$
\forall\,x\in\R,\quad\Psi_{\mu_\mt^\ve}(x)=\left\{
\begin{array}{cl}
r+\ve\dfrac{t\tpr}{t+\tpr}&\text{if }x\leq r-(1-\ve)t,\\
r+\tpr&\text{if }r-(1-\ve)t<x\leq r+\tpr,\\
x&\text{if }x>r+\tpr.
\end{array}
\right.
$$
Since $(t,\tpr)\longmapsto\dfrac{t\tpr}{t+\tpr}$ is non-decreasing on $\left(\R_+^\ast\right)^2$, it is not difficult to show that, for every $x\in\R$ and every $\mt_1\leq\mt_2$, $\Psi_{\mu_{\mt_1}^\ve}(x)\leq \Psi_{\mu_{\mt_2}^\ve}(x)$. It remains to prove that $C_{\mu^\ve}$ is TP$_2$ in $(t,\tpr)$ when $x$ is fixed. The function $C_{\mu^\ve}$ is given by:
$$
\forall\,(\mt,x)\in\left(\R_+^\ast\right)^2\times\R,\quad C_{\mu^\ve}(\mt,x)=\left\{
\begin{array}{cl}
r-x+\ve\dfrac{t\tpr}{t+\tpr}&\text{if }x<r-(1-\ve)t,\\
\dfrac{t(\tpr+r-x)}{t+\tpr}&\text{if }r-(1-\ve)t\leq x<r+\tpr,\\
0&\text{if }x\geq r+\tpr.
\end{array}
\right.
$$
Let $t_1\leq t_2$ and $\tpr_1\leq\tpr_2$ be positive real numbers. We wish to prove that, for every $x\in\R$,
\begin{equation}\label{eq:exaMRLMTP2}
C_{\mu^\ve}(t_1,\tpr_1,x)C_{\mu^\ve}(t_2,\tpr_2,x)-C_{\mu^\ve}(t_1,\tpr_2,x)C_{\mu^\ve}(t_2,\tpr_1,x)\geq0.
\end{equation}
We may write 
$$
\R=]-\infty,r-(1-\ve)t_2[\cup[r-(1-\ve)t_2,r-(1-\ve)t_1[ \cup[r-(1-\ve)t_1,r+\tpr_1[\cup[r+\tpr_1,+\infty[.
$$
If $x\in]-\infty,r-(1-\ve)t_2[$, then
\begin{align*}
&C_{\mu^\ve}(t_1,\tpr_1,x)C_{\mu^\ve}(t_2,\tpr_2,x)-C_{\mu^\ve}(t_1,\tpr_2,x)C_{\mu^\ve}(t_2,\tpr_1,x)\\
&=\left(r-x+\ve\dfrac{t_1\tpr_1}{t_1+\tpr_1}\right)\left(r-x+\ve\dfrac{t_2\tpr_2}{t_2+\tpr_2}\right)-\left(r-x+\ve\dfrac{t_1\tpr_2}{t_1+\tpr_2}\right)\left(r-x+\ve\dfrac{t_2\tpr_1}{t_2+\tpr_1}\right)\\
&=\ve(r-x)\left(\dfrac{t_1\tpr_1}{t_1+\tpr_1}+\dfrac{t_2\tpr_2}{t_2+\tpr_2}-\dfrac{t_1\tpr_2}{t_1+\tpr_2}-\dfrac{t_2\tpr_1}{t_2+\tpr_1}\right)+\\
&\qquad\ve^2t_1\tpr_1t_2\tpr_2\left(\dfrac{1}{(t_1+\tpr_1)(t_2+\tpr_2)}-\dfrac{1}{(t_1+\tpr_2)(t_2+\tpr_1)}\right).
\end{align*}
Since, on $\left(\R_+^\ast\right)^2$, $(t,\tpr)\longmapsto\dfrac{t\tpr}{t+\tpr}$ is supermodular and $(t,\tpr)\longmapsto\dfrac{1}{t+\tpr}$ is TP$_2$, we have
$$
\dfrac{t_1\tpr_1}{t_1+\tpr_1}+\dfrac{t_2\tpr_2}{t_2+\tpr_2}-\dfrac{t_1\tpr_2}{t_1+\tpr_2}-\dfrac{t_2\tpr_1}{t_2+\tpr_1}\geq0
$$ 
and
$$
\dfrac{1}{(t_1+\tpr_1)(t_2+\tpr_2)}-\dfrac{1}{(t_1+\tpr_2)(t_2+\tpr_1)}\geq0
$$
respectively which implies that (\ref{eq:exaMRLMTP2}) holds.\\
Suppose that $x\in[r-(1-\ve)t_2,r-(1-\ve)t_1[$.  Then
\begin{align*}
&C_{\mu^\ve}(t_2,\tpr_2,x)C_{\mu^\ve}(t_1,\tpr_1,x)-C_{\mu^\ve}(t_2,\tpr_1,x)C_{\mu^\ve}(t_1,\tpr_2,x)\\
&=\dfrac{t_2(\tpr_2+r-x)}{t_2+\tpr_2}\left(r-x+\ve\dfrac{t_1\tpr_1}{t_1+\tpr_1}\right)-\dfrac{t_2(\tpr_1+r-x)}{t_2+\tpr_1}\left(r-x+\ve\dfrac{t_1\tpr_2}{t_1+\tpr_2}\right)\\
&=t_2(r-x)\left(\dfrac{\tpr_2+r-x}{t_2+\tpr_2}-\dfrac{\tpr_1+r-x}{t_2+\tpr_1}\right)+\ve t_1t_2\left(\dfrac{\tpr_1(\tpr_2+r-x)}{(t_1+\tpr_1)(t_2+\tpr_2)}-\dfrac{\tpr_2(\tpr_1+r-x)}{(t_1+\tpr_2)(t_2+\tpr_1)}\right).
\end{align*}
Observe that, as $x\geq r-(1-\ve)t_2$,
$$
\frac{\tpr_2+r-x}{t_2+\tpr_2}-\frac{\tpr_1+r-x}{t_2+\tpr_1}=\frac{(t_2-r+x)(\tpr_2-\tpr_1)}{(t_2+\tpr_2)(t_2+\tpr_1)}\geq\ve\frac{t_2(\tpr_2-\tpr_1)}{(t_2+\tpr_2)(t_2+\tpr_1)}.
$$
On the other hand, since $(t,\tpr)\longmapsto\dfrac{1}{t+\tpr}$ is TP$_2$ on $\left(\R_+^\ast\right)^2$,
$$
\frac{\tpr_1(\tpr_2+r-x)}{(t_1+\tpr_1)(t_2+\tpr_2)}\geq \frac{\tpr_1(\tpr_2+r-x)}{(t_1+\tpr_2)(t_2+\tpr_1)}
$$
and, as a consequence, 
$$
\frac{\tpr_1(\tpr_2+r-x)}{(t_1+\tpr_1)(t_2+\tpr_2)}-\frac{\tpr_2(\tpr_1+r-x)}{(t_1+\tpr_2)(t_2+\tpr_1)}\geq-\frac{(r-x)(\tpr_2-\tpr_1)}{(t_1+\tpr_2)(t_2+\tpr_1)}.
$$
Hence,
\begin{align*}
&C_{\mu^\ve}(t_2,\tpr_2,x)C_{\mu^\ve}(t_1,\tpr_1,x)-C_{\mu^\ve}(t_2,\tpr_1,x)C_{\mu^\ve}(t_1,\tpr_2,x)\\
&\geq\dfrac{\ve(r-x)t_2(\tpr_2-\tpr_1)}{t_2+\tpr_1}\left(\dfrac{t_2}{t_2+\tpr_2}-\dfrac{t_1}{t_1+\tpr_2}\right)\geq0
\end{align*}
since $t\longmapsto\dfrac{t}{t+\tpr_2}$ is non-decreasing on $\R_+$.\\
Suppose that $x\in[r-(1-\ve)t_1,r+\tpr_1[$. Then it follows from the total positivity property of the function $(t,\tpr)\longmapsto\dfrac{1}{t+\tpr}$ that
\begin{align*}
&C_{\mu^\ve}(t_1,\tpr_1,x)C_{\mu^\ve}(t_2,\tpr_2,x)-C_{\mu^\ve}(t_1,\tpr_2,x)C_{\mu^\ve}(t_2,\tpr_1,x)\\
&=t_1t_2(\tpr_1+r-x)(\tpr_2+r-x)\left(\frac{1}{(t_1+\tpr_1)(t_2+\tpr_2)}-\frac{1}{(t_1+\tpr_2)(t_2+\tpr_1)}\right)\geq0.
\end{align*}
Finally, if $x\in[r+\tpr_1,+\infty[$, the left-hand side of (\ref{eq:exaMRLMTP2}) equals zero and (\ref{eq:exaMRLMTP2}) is obviously satisfied. This ends the proof of (\ref{eq:exaMRLMTP2}). We mention that (\ref{eq:exaMRLMTP2}) remains true if one allows $t$, or $\tpr$, or both $t$
and $\tpr$ to take the value zero.
\end{proof}
\begin{rem}
We mention that $C_{\mu^\ve}$ does not have interval support. Indeed, if $(\mt_1,x_1)$ and $(\mt_2,x_2)$ satisfy $0<t_1<t_2$ and $x_1<\tpr_1+r<x_2<\tpr_2+r$, then $(\mt_1,x_1)\leq(\mt_1,x_2)\leq(\mt_2,x_2)$, $C_{\mu^\ve}(\mt_1,x_1)>0$, $C_{\mu^\ve}(\mt_2,x_2)>0$ and $C_{\mu^\ve}(\mt_1,x_2)=0$.
As a consequence, Proposition 2.1 of \citep{KaR80} does not apply.
\end{rem}
The next example shows that the family of MRL processes includes strictly that of processes which have MTP$_2$ integrated survival functions.
\begin{exa}\label{exa:coExaMRLMTP2}
Let $(\mu_\mt,\mt\in\R_+^2)$ be the process given by: for every $t\in\R_+$, $\mu_{(t,0)}=\mu_{(0,t)}=\delta_0$ and, for every $\mt=(t,\tpr)\in\left(\R_+^\ast\right)^2$,
$$
\mu_\mt=\frac{t+\tpr}{2t+\tpr}\delta_{-t}+\frac{t}{2t+\tpr}\delta_{t+\tpr}.
$$ 
Then $\left(\mu_\mt,\mt\in\R_+^2\right)$ is a MRL process whose integrated survival function $C_\mu$ is not TP$_2$ in $(t,\tpr)$ when $x$ is fixed. We start by showing that $\left(\mu_\mt,\mt\in\R_+^2\right)$ is a MRL process. The Hardy-Littlewood function of $\mu_\mt$ is given by:
$$
\forall\,x\in\R,\quad\Psi_{\mu_\mt}(x)=\left\{
\begin{array}{cl}
0&\text{if }x\leq-t,\\
t+\tpr&\text{if }-t<x\leq t+\tpr,\\
x&\text{if }x>t+\tpr
\end{array}
\right.
$$
and it is not difficult to verify that, for every $x\in\R$, $\mt\longmapsto\Psi_{\mu_\mt}(x)$ is non-decreasing. 

Now, we show that $C_\mu$ is not TP$_2$ in $(t,\tpr)$ when $x$ is fixed. We have
$$
\forall\,(\mt,x)\in\left(\R_+^\ast\right)^2\times\R,\quad C_\mu(\mt,x)=\left\{
\begin{array}{cl}
-x&\text{if }x<-t,\\
\dfrac{t(t+\tpr-x)}{2t+\tpr}&\text{if }-t\leq x<t+\tpr,\\
0&\text{if }x\geq t+\tpr
\end{array}
\right.
$$
and, in particular, $C_\mu(t,\tpr,0)=\dfrac{t(t+\tpr)}{2t+\tpr}$. But, $(t,\tpr)\longmapsto\dfrac{t+\tpr}{2t+\tpr}$ is not TP$_2$ on $\left(\R_+^\ast\right)^2$.
\end{exa}

\subsection{MRL processes obtained by censoring transformations}
Let $\left(\nu_t,t\in\R_+\right)$ be a one-parameter MRL process such that, for every $t\in\R_+$,
$$
r_t=r_{\nu_t}=\inf\{z\in\R,\,\nu_t([z,+\infty[)=0\}<\infty.
$$
Let $\phi,\varphi:\,\R_+^2\to\R$ be two maps such that $\phi$ is non-increasing, $\varphi$ is non-decreasing,   $\phi(0,0)=\varphi(0,0)=r_0$ and, for every $\mt\in\R_+^2\setminus\{(0,0)\}$, $\varphi(\mt)\geq r_t$ and $\varphi(\mt)>\phi(\mt)$. Consider the process $\left(\mu_\mt,\mt\in\R_+^2\right)$ given by: for every $t\in\R_+$, $\mu_{(0,0)}=\nu_0$ and 
\begin{equation}\label{eq:ExaMRLprocGnrle}
\forall\,\mt\in\R_+^2\setminus\{(0,0)\},\quad\mu_\mt=1_{]-\infty,\phi(\mt)[}\nu_t+\alpha_\mt\delta_{\phi(\mt)}+\beta_\mt\delta_{\varphi(\mt)},
\end{equation}
where
$$
\alpha_\mt=\frac{1}{\varphi(\mt)-\phi(\mt)}\int_{[\phi(\mt),r_t]}(\varphi(\mt)-y)\nu_t(dy)
$$
and
$$
\beta_\mt=\frac{1}{\varphi(\mt)-\phi(\mt)}\int_{[\phi(\mt),r_t]}(y-\phi(\mt))\nu_t(dy).
$$
Observe that $(\alpha_\mt,\beta_\mt)$ is the unique solution of the linear system:
$$
\alpha_\mt+\beta_\mt=\nu_t([\phi(\mt),r_t])\text{ and }\phi(\mt)\alpha_\mt+\varphi(\mt)\beta_\mt=\int_{[\phi(\mt),r_t]}y\nu_t(dy).
$$
\begin{prop}\label{prop:MRLprocGnrle}
The process $\left(\mu_\mt,\mt\in\R_+^2\right)$ defined by (\ref{eq:ExaMRLprocGnrle}) is a MRL process.
\end{prop}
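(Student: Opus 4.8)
The plan is to verify directly that $\mt\longmapsto\Psi_{\mu_\mt}(x)$ is non-decreasing for every fixed $x\in\R$, which is the definition of an MRL process. I begin with elementary consequences of the hypotheses. Since $\left(\nu_t,t\in\R_+\right)$ is a one-parameter MRL process, the one-parameter version of Point~2 of Remark~\ref{rem:MRLPsiRLmu} gives that $t\longmapsto r_t$ is non-decreasing; with $\phi$ non-increasing and $\phi(0,0)=r_0$ this yields $\phi(\mt)\leq r_0\leq r_t$ for every $\mt$, so that $[\phi(\mt),r_t]$ is a non-empty interval and the assumptions $\varphi(\mt)\geq r_t$, $\varphi(\mt)>\phi(\mt)$ force $\alpha_\mt,\beta_\mt\geq0$. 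Moreover the two equations defining $(\alpha_\mt,\beta_\mt)$ say exactly that $\mu_\mt$ and $\nu_t$ carry the same total mass and the same first moment on $[\phi(\mt),+\infty[$, and $\mu_\mt$ coincides with $\nu_t$ on $]-\infty,\phi(\mt)[$; in particular $\mu_\mt$ is an integrable probability measure. Finally $\beta_\mt=0$ happens precisely when $\phi(\mt)=r_t$, which by the above forces $r_t=r_0$ and $\mu_\mt=\nu_t$ (the \emph{boundary case}); outside it, $r_{\mu_\mt}=\varphi(\mt)$.

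Next I evaluate $\Psi_{\mu_\mt}$ outside the boundary case. If $x\leq\phi(\mt)$, the equalities above give $\mu_\mt([x,+\infty[)=\nu_t([x,+\infty[)$ and $\int_{[x,+\infty[}y\,\mu_\mt(dy)=\int_{[x,+\infty[}y\,\nu_t(dy)$ (the two atoms replace the restriction of $\nu_t$ to $[\phi(\mt),r_t]$ with equal mass and barycentre), whence $\Psi_{\mu_\mt}(x)=\Psi_{\nu_t}(x)$. If $\phi(\mt)<x\leq\varphi(\mt)=r_{\mu_\mt}$, only the atom at $\varphi(\mt)$ lies in $[x,+\infty[$, so $\Psi_{\mu_\mt}(x)=\varphi(\mt)$, and if $x>\varphi(\mt)$ then $\Psi_{\mu_\mt}(x)=x$. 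Since, directly from its definition, $\Psi_{\nu_t}(x)\leq r_t$ for $x\leq r_t$, and $\phi(\mt)\leq r_t\leq\varphi(\mt)$, we deduce, in the boundary case as well (there $\mu_\mt=\nu_t$ and $\varphi(\mt)>r_t$),
$$
\forall\,x\in\R,\qquad\Psi_{\mu_\mt}(x)\leq\max\bigl(x,\varphi(\mt)\bigr).
$$

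Now fix $\ms\leq\mt$. As $\phi$ is non-increasing and $\varphi$ non-decreasing, $\phi(\mt)\leq\phi(\ms)$ and $\varphi(\ms)\leq\varphi(\mt)$; as $\left(\nu_t\right)$ is MRL, $\Psi_{\nu_s}(x)\leq\Psi_{\nu_t}(x)$ for every $x$. If $\mt$ is a boundary case, then $\phi(\mt)=r_t=r_0$ forces $\phi(\ms)=r_0$ and $r_s=r_0$, so $\ms$ is a boundary case too and $\Psi_{\mu_\ms}=\Psi_{\nu_s}\leq\Psi_{\nu_t}=\Psi_{\mu_\mt}$. Otherwise split $\R$ into $]-\infty,\phi(\mt)]$, $]\phi(\mt),\varphi(\mt)]$ and $]\varphi(\mt),+\infty[$: for $x\leq\phi(\mt)\leq\phi(\ms)$, $\Psi_{\mu_\ms}(x)=\Psi_{\nu_s}(x)\leq\Psi_{\nu_t}(x)=\Psi_{\mu_\mt}(x)$; for $\phi(\mt)<x\leq\varphi(\mt)$, $\Psi_{\mu_\mt}(x)=\varphi(\mt)\geq\max\bigl(x,\varphi(\ms)\bigr)\geq\Psi_{\mu_\ms}(x)$; and for $x>\varphi(\mt)\geq\varphi(\ms)$, $\Psi_{\mu_\mt}(x)=x=\max\bigl(x,\varphi(\ms)\bigr)\geq\Psi_{\mu_\ms}(x)$. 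Hence $\Psi_{\mu_\ms}(x)\leq\Psi_{\mu_\mt}(x)$ for all $x$, i.e. $\left(\mu_\mt,\mt\in\R_+^2\right)$ is an MRL process.

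The only genuinely delicate step is the identity $\Psi_{\mu_\mt}(x)=\Psi_{\nu_t}(x)$ for $x\leq\phi(\mt)$: it asserts that replacing the restriction of $\nu_t$ to $[\phi(\mt),r_t]$ by the two-atom law on $\{\phi(\mt),\varphi(\mt)\}$ with the same mass and barycentre leaves the Hardy--Littlewood function unchanged below $\phi(\mt)$, and this is immediate from the defining equations of $(\alpha_\mt,\beta_\mt)$; after that, only the bookkeeping of the boundary case $\beta_\mt=0$ requires attention, and it collapses to the one-parameter statement. If one prefers to stay within the framework of integrated survival functions, the same scheme works: $C_{\mu_\mt}$ agrees with $C_{\nu_t}$ on $]-\infty,\phi(\mt)]$ and equals the positive part of the chord joining $(\phi(\mt),C_{\nu_t}(\phi(\mt)))$ to $(\varphi(\mt),0)$ on $[\phi(\mt),+\infty[$, so convexity of $C_{\nu_t}$ (note $C_{\nu_t}(\varphi(\mt))=0$) gives $C_{\mu_\mt}\geq C_{\nu_t}$; the determinant inequality (\ref{eq:MRLTPP2}) of Theorem~\ref{theo:MRLTPP2} then follows from this bound, the monotonicity of $\phi$ and $\varphi$, the monotonicity of $x\longmapsto C_{\nu_t}(x)/(\varphi(\mt)-x)$ on $]-\infty,r_t[$ (again a consequence of convexity of $C_{\nu_t}$ together with $C_{\nu_t}(\varphi(\mt))=0$), and the one-parameter form of (\ref{eq:MRLTPP2}) for $\left(\nu_t\right)$.
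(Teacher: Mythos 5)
Your proof is correct and follows essentially the same route as the paper: compute $\Psi_{\mu_\mt}$ explicitly (equal to $\Psi_{\nu_t}$ on $]-\infty,\phi(\mt)]$, to $\varphi(\mt)$ on $]\phi(\mt),\varphi(\mt)]$, and to $x$ beyond $\varphi(\mt)$) and then read off the pointwise monotonicity in $\mt$ from the monotonicity of $\phi$, $\varphi$ and of $(\Psi_{\nu_t})$. You are in fact more careful than the paper about the degenerate case $\phi(\mt)=r_t$ (where $\beta_\mt=0$, $\mu_\mt=\nu_t$, and the displayed formula for $\Psi_{\mu_\mt}$ needs amending), a case the paper's one-line argument glosses over but which does not affect the conclusion.
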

\begin{proof}
For every $\mt\in\R_+^2$, the Hardy-Littlewood function $\Psi_{\mu_\mt}$ attached to $\mu_\mt$ is given by:
$$
\Psi_{\mu_\mt}(x)=\left\{
\begin{array}{cl}
\Psi_{\nu_t}(x)&\text{if }x\leq\phi(\mt),\\
\varphi(\mt)&\text{if }\phi(\mt)< x\leq \varphi(\mt),\\
x&\text{if }x> \varphi(\mt)
\end{array}
\right.
$$
from which one deduces that the family $\left(\Psi_{\mu_\mt},\mt\in\R_+^2\right)$ is pointwise non-decreasing.
\end{proof}
The integrated survival function of a process of the form (\ref{eq:ExaMRLprocGnrle}) is not necessarily TP$_2$ in $(t,\tpr)$ when $x$ is fixed. For instance, if one takes $\nu_t=\delta_t$, $\phi(\mt)=-t$ and $\varphi(\mt)=t+\tpr$, one recovers the process given in Example \ref{exa:coExaMRLMTP2} whose integrated survival function is not MTP$_2$. Now, we restrict ourselves to two-parameter MRL processes of the form (\ref{eq:ExaMRLprocGnrle}) which have MTP$_2$ integrated survival functions.
\begin{prop}\label{prop:ExaMRLMTP2Mzero}
Let $\nu$ denote an integrable probability measure such that the upper bound $r$ of its support is finite. Let $\left(\mu_\mt,\mt\in\R_+^2\right)$ be the process defined by: $\mu_{(0,t)}=\nu$ for every $t\in\R_+$, and
\begin{equation}\label{eq:ExaMRLMTP2Mzero}
\forall\,\mt\in\R_+\times\R_+^{\ast},\quad\mu_\mt=1_{]-\infty,r-t[}\nu+\alpha_\mt\delta_{r-t}+\beta_\mt\delta_{r+\tpr},
\end{equation}
where
\begin{equation}\label{eq:ExaMRL0DefAlpha}
\alpha_\mt=\frac{1}{t+\tpr}\int_{[r-t,r]}(r+\tpr-y)\nu(dy)
\end{equation}
and
\begin{equation}\label{eq:ExaMRL0DefBeta}
\beta_\mt=\frac{1}{t+\tpr}\int_{[r-t,r]}(y-r+t)\nu(dy).
\end{equation}
Then the integrated survival function $C_\mu$ of $\left(\mu_\mt,\mt\in\R_+^2\right)$ is MTP$_2$.
\end{prop}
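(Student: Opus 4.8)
The plan is to verify the hypotheses of Theorem~\ref{theo:MRLMTP2Tail}, i.e.\ to check that $C_\mu$ is TP$_2$ in each pair of variables when the remaining variable is held fixed. First I would observe that $\left(\mu_\mt,\mt\in\R_+^2\right)$ is exactly the process produced by the construction~(\ref{eq:ExaMRLprocGnrle}) of Proposition~\ref{prop:MRLprocGnrle}, applied to the constant one-parameter MRL process $\nu_t\equiv\nu$, to the non-increasing map $\phi(\mt)=r-t$ and to the non-decreasing map $\varphi(\mt)=r+\tpr$; the compatibility requirements ($\phi(0,0)=\varphi(0,0)=r$, and $\varphi(\mt)\ge r_t=r$, $\varphi(\mt)>\phi(\mt)$ for $\mt\ne(0,0)$) are immediate. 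Proposition~\ref{prop:MRLprocGnrle} then gives that $\left(\mu_\mt,\mt\in\R_+^2\right)$ is a MRL process, and Remark~\ref{rem:KemMRLTPP2} yields at once that $C_\mu$ is TP$_2$ in $(t,x)$ when $\tpr$ is fixed and TP$_2$ in $(\tpr,x)$ when $t$ is fixed. So the whole issue is to prove that, for every fixed $x\in\R$, the map $(t,\tpr)\longmapsto C_\mu(\mt,x)$ is TP$_2$.

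For this I would first compute $C_\mu$ explicitly. Using the linear system $\alpha_\mt+\beta_\mt=\nu([r-t,r])$ and $(r-t)\alpha_\mt+(r+\tpr)\beta_\mt=\int_{[r-t,r]}y\,\nu(dy)$ that characterises $(\alpha_\mt,\beta_\mt)$, one finds that for $x<r-t$ the two correcting atoms contribute $(r-t-x)\alpha_\mt+(r+\tpr-x)\beta_\mt=\int_{[r-t,r]}(y-x)\nu(dy)$, so that $C_\mu(\mt,x)=\int_{[x,r-t[}(y-x)\nu(dy)+\int_{[r-t,r]}(y-x)\nu(dy)=C_\nu(x)$, where the last equality uses that $\nu$ is carried by $]-\infty,r]$. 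The same support property gives $\beta_\mt=\frac1{t+\tpr}\int_{[r-t,r]}\bigl(y-(r-t)\bigr)\nu(dy)=\frac1{t+\tpr}C_\nu(r-t)$, and altogether
$$
C_\mu(\mt,x)=\begin{cases}
C_\nu(x)&\text{if }x<r-t,\\
\dfrac{(r+\tpr-x)\,C_\nu(r-t)}{t+\tpr}&\text{if }r-t\le x<r+\tpr,\\
0&\text{if }x\ge r+\tpr.
\end{cases}
$$

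The key step is then to notice that, since $C_\nu(x)=C_\nu\bigl(r-(r-x)\bigr)$, this formula can be rewritten for every $x$, with $u=t\vee(r-x)^{+}$, as
$$
C_\mu(\mt,x)=\bigl(\tpr+(r-x)\bigr)^{+}\cdot C_\nu(r-u)\cdot\frac1{u+\tpr}\,,
$$
that is, as the product of a nonnegative non-decreasing function of $\tpr$ alone, a nonnegative non-decreasing function of $t$ alone (recall that $C_\nu$ is non-increasing and $t\mapsto u$ non-decreasing), and the function $(t,\tpr)\mapsto\frac1{u+\tpr}$, which is TP$_2$ in $(t,\tpr)$ because $(u,\tpr)\mapsto\frac1{u+\tpr}$ is TP$_2$ and $t\mapsto u$ is non-decreasing. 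Consequently, for $t_1\le t_2$ and $\tpr_1\le\tpr_2$ the one-variable factors cancel between $C_\mu\bigl((t_1,\tpr_1),x\bigr)C_\mu\bigl((t_2,\tpr_2),x\bigr)$ and $C_\mu\bigl((t_1,\tpr_2),x\bigr)C_\mu\bigl((t_2,\tpr_1),x\bigr)$, and what remains is precisely the TP$_2$ inequality for $\frac1{u+\tpr}$; hence $C_\mu$ is TP$_2$ in $(t,\tpr)$ and Theorem~\ref{theo:MRLMTP2Tail} yields that $C_\mu$ is MTP$_2$.

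The hard part, and essentially the only computational part, is getting the explicit formula for $C_\mu$ right: that the ``lower'' region $x<r-t$ reconstitutes $C_\nu(x)$ exactly is precisely where the linear system defining $(\alpha_\mt,\beta_\mt)$ is used, and the finiteness of $r$ is what identifies $\beta_\mt$ with the integrated survival function of $\nu$ evaluated at $r-t$. Once the product decomposition is in hand the total positivity is immediate from that of $(t,\tpr)\mapsto 1/(t+\tpr)$, so --- unlike in Proposition~\ref{prop:TheoMRLMTP2} --- no region-by-region estimation is needed; the few degenerate points (such as $\tpr=0$, or where $u+\tpr=0$) are dealt with by checking directly that the products in question vanish.
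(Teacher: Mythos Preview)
Your proof is correct. The explicit formula for $C_\mu$ is right (it is exactly the $\ve=0$ case of (\ref{eq:DefDTailMTP2ModifNu}) with $\wtb(t)=C_\nu(r-t)$), and your key factorisation
\[
C_\mu(\mt,x)=\bigl(\tpr+(r-x)\bigr)^{+}\,C_\nu(r-u)\,\frac{1}{u+\tpr},\qquad u=t\vee(r-x)^{+},
\]
checks out in all three regions; the degenerate case $u+\tpr=0$ indeed forces $(\tpr+(r-x))^{+}=0$, so the products vanish as you say. Since a product of nonnegative one-variable factors with a TP$_2$ factor is TP$_2$, you get TP$_2$ in $(t,\tpr)$ with no case analysis, and Theorem~\ref{theo:MRLMTP2Tail} concludes.

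The paper takes a different route: it does not prove Proposition~\ref{prop:ExaMRLMTP2Mzero} directly but deduces it as the special case $\ve=0$ of Proposition~\ref{prop:ExaMTP2ModifNu}, whose proof proceeds by the same region-by-region estimation as in Proposition~\ref{prop:TheoMRLMTP2} (using supermodularity of $(t,\tpr)\mapsto\tpr\wtb(t)/(t+\tpr)$ and TP$_2$ of $1/(t+\tpr)$ on each piece). Your argument is slicker here precisely because at $\ve=0$ the lower-region value collapses to $C_\nu(x)$ and the whole thing factorises; this trick does not survive for $\ve>0$, where the lower-region value becomes $C_\nu(x)+\ve\tpr\wtb(t)/(t+\tpr)$ and the unified product decomposition is lost. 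So what you gain is an economical, case-free proof of the present proposition; what the paper's approach buys is a single argument that covers the whole family $\ve\ge0$ at once. A minor stylistic remark: the monotonicity of the one-variable factors that you note is not actually used --- only their nonnegativity matters for the TP$_2$ of the product.
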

We omit the proof of Proposition \ref{prop:ExaMRLMTP2Mzero} since this result is a particular case of Proposition \ref{prop:ExaMTP2ModifNu} stated  below.
Processes of the form (\ref{eq:ExaMRLMTP2Mzero}) have constant mean. These processes may be slightly modified so that the resulting processes still have MTP$_2$ integrated survival functions, but have means which depend on $\mt$. For instance, the process $\left(\mu_\mt^\ve,\mt\in\R_+^2\right)$ in Proposition \ref{prop:TheoMRLMTP2} is a modification of the process $\left(\mu_\mt,\mt\in\R_+^2\right)$ given by (\ref{eq:ExaMRLMTP2Mzero}) when $\nu=\delta_r$, $r\in\R$.
In the next result, we provide a modified version of the process $\left(\mu_\mt,\mt\in\R_+^2\right)$ given by (\ref{eq:ExaMRLMTP2Mzero}) which has MTP$_2$ integrated survival function, but whose mean depends on $\mt$.
\begin{prop}\label{prop:ExaMTP2ModifNu}
Let $\nu$ be an integrable probability measure whose support has a finite upper bound denoted by $r$. For every $\mt\in\R_+^2$, let $\alpha_\mt$ and $\beta_\mt$ be given by (\ref{eq:ExaMRL0DefAlpha}) and (\ref{eq:ExaMRL0DefBeta}) respectively. Then, for every $\ve\geq0$, the process $\left(\mu_\mt^\ve,\mt\in\R_+^2\right)$ defined by: for every $t\in\R_+$, $\mu^\ve_{(0,t)}=\nu$, and
\begin{equation}\label{eq:ExaMTP2ModifNu}
\forall\,\mt\in\R_+^\ast\times\R_+,\quad\mu_\mt^\ve=1_{]-\infty,r-t[}\nu+\alpha_\mt\delta_{r-t}+\beta_\mt\delta_{r+(1+\ve)\tpr}
\end{equation}
has MTP$_2$ integrated survival function.
\end{prop}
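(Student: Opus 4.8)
The plan is to apply Theorem~\ref{theo:MRLMTP2Tail}, so that it suffices to verify that $C_{\mu^\ve}$ is TP$_2$ in each pair of variables when the third is held fixed: (a) in $(t,x)$ for fixed $\tpr$, (b) in $(\tpr,x)$ for fixed $t$, and (c) in $(t,\tpr)$ for fixed $x$. I will write $\rho(t):=C_\nu(r-t)$; recalling that $C_\nu$ is convex, nonnegative and non-increasing with $C_\nu(x)=0$ for $x\geq r$, the function $\rho$ is convex, nonnegative, non-decreasing and $\rho(0)=0$, so in addition $t\mapsto\rho(t)/t$ is non-decreasing on $\R_+^\ast$ and $t\rho'(t)\geq\rho(t)$. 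Since $\alpha_\mt+\beta_\mt=\nu([r-t,r])$, each $\mu^\ve_\mt$ is a probability measure with support bounded above by $r+(1+\ve)\tpr$, and $\beta_\mt=\rho(t)/(t+\tpr)$. For (a) and (b), by Remark~\ref{rem:KemMRLTPP2} it is equivalent to prove that $\left(\mu^\ve_\mt,\mt\in\R_+^2\right)$ is a MRL process; using the identity $\int_{[x,+\infty[}y\,\mu^\ve_\mt(dy)=\int_{[x,+\infty[}y\,\nu(dy)+\ve\tpr\beta_\mt$, valid for $x\le r-t$, I would compute
$$
\Psi_{\mu^\ve_\mt}(x)=\left\{\begin{array}{cl}\Psi_\nu(x)+\dfrac{\ve\tpr\beta_\mt}{\nu([x,+\infty[)}&\text{if }x\le r-t,\\[2mm] r+(1+\ve)\tpr&\text{if }r-t<x\le r+(1+\ve)\tpr,\\[1mm] x&\text{if }x>r+(1+\ve)\tpr,\end{array}\right.\qquad(t>0),
$$
and $\Psi_{\mu^\ve_{(0,t)}}=\Psi_\nu$. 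Pointwise monotonicity of $\mt\mapsto\Psi_{\mu^\ve_\mt}(x)$ then reduces to a short case check on the position of $x$ among $r-t\leq r-s\leq r+(1+\ve)\spr\leq r+(1+\ve)\tpr$ (using $\Psi_\nu\leq r$, $\spr\leq\tpr$, and the monotonicity of $C_\nu$), the only non-elementary point being that $\mt\mapsto\ve\tpr\beta_\mt=\ve\,\frac{\rho(t)}{t}\,\frac{t\tpr}{t+\tpr}$ is non-decreasing, which holds because $t\mapsto\rho(t)/t$ and $(t,\tpr)\mapsto t\tpr/(t+\tpr)$ are.

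For (c) the same computation yields, for $t>0$,
$$
C_{\mu^\ve}(t,\tpr,x)=\left\{\begin{array}{cl}C_\nu(x)+\ve\tpr\beta_\mt&\text{if }x\le r-t,\\[1mm] (r+(1+\ve)\tpr-x)\,\beta_\mt&\text{if }r-t<x\le r+(1+\ve)\tpr,\\[1mm] 0&\text{if }x>r+(1+\ve)\tpr,\end{array}\right.
$$
together with $C_{\mu^\ve}(0,\tpr,x)=C_\nu(x)$. Fixing $x$, $t_1\leq t_2$, $\tpr_1\leq\tpr_2$, and writing $\Delta:=C_{\mu^\ve}(t_1,\tpr_1,x)C_{\mu^\ve}(t_2,\tpr_2,x)-C_{\mu^\ve}(t_1,\tpr_2,x)C_{\mu^\ve}(t_2,\tpr_1,x)$, I would split according to the position of $x$ relative to $r-t_2\leq r-t_1\leq r+(1+\ve)\tpr_1\leq r+(1+\ve)\tpr_2$. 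If $x>r+(1+\ve)\tpr_1$, then $C_{\mu^\ve}(t_1,\tpr_1,x)=0$ or $C_{\mu^\ve}(t_2,\tpr_1,x)=0$, so $\Delta\geq0$ trivially. If $r-t_1<x\le r+(1+\ve)\tpr_1$, all four entries are of the middle type and $\Delta=(r+(1+\ve)\tpr_1-x)(r+(1+\ve)\tpr_2-x)\big[\beta_{(t_1,\tpr_1)}\beta_{(t_2,\tpr_2)}-\beta_{(t_1,\tpr_2)}\beta_{(t_2,\tpr_1)}\big]\geq0$, since $\beta_\mt=\rho(t)/(t+\tpr)$ inherits the TP$_2$ property of $(t,\tpr)\mapsto1/(t+\tpr)$ already used in the proof of Proposition~\ref{prop:TheoMRLMTP2}. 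If $x\le r-t_2$, all four entries are of the first type; the $C_\nu(x)^2$ term cancels and $\Delta=\ve\,C_\nu(x)\,D_1+\ve^2\tpr_1\tpr_2\,D_2$, where $D_2=\beta_{(t_1,\tpr_1)}\beta_{(t_2,\tpr_2)}-\beta_{(t_1,\tpr_2)}\beta_{(t_2,\tpr_1)}\geq0$ as above and $D_1$ is the supermodularity defect of $(t,\tpr)\mapsto\tpr\beta_\mt$, equal to $(\tpr_2-\tpr_1)\big[\frac{t_2\rho(t_2)}{(t_2+\tpr_1)(t_2+\tpr_2)}-\frac{t_1\rho(t_1)}{(t_1+\tpr_1)(t_1+\tpr_2)}\big]\geq0$ because $t\mapsto t\rho(t)/\big((t+\tpr_1)(t+\tpr_2)\big)$ is non-decreasing (a consequence of $\rho$ being non-decreasing with $t\rho'(t)\geq\rho(t)$).

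The hard part will be the remaining interval $r-t_2<x\le r-t_1$, where the first row of the array is of the first type and the second of the middle type. I would set $\tau=r-x\in[t_1,t_2)$ (so $C_\nu(x)=\rho(\tau)$ and $r+(1+\ve)\tpr_j-x=(1+\ve)\tpr_j+\tau$), factor out $\rho(t_2)>0$ and the positive product of the four numbers $t_i+\tpr_j$, and set $\tpr_2=\tpr_1+d$ with $d\geq0$; a direct expansion then shows that the sign of $\Delta$ is that of $d\,(\mathcal A+d\,\mathcal B)$, where
$$
\mathcal B=\rho(\tau)\big((1{+}\ve)t_2-\tau\big)(t_1{+}\tpr_1)+\ve\,\rho(t_1)\big((1{+}\ve)\tpr_1(t_2{-}t_1)-\tau(t_1{+}\tpr_1)\big)
$$
and $\mathcal A=\rho(\tau)\big((1{+}\ve)t_2-\tau\big)(t_1{+}\tpr_1)^2+\ve\,\rho(t_1)\big(\tpr_1\big((1{+}\ve)\tpr_1{+}\tau\big)(t_2{-}t_1)-\tau(t_1{+}\tpr_1)(t_2{+}\tpr_1)\big)$. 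I expect $\mathcal B\geq0$ to follow from $\rho(\tau)\geq\rho(t_1)$ and $(1{+}\ve)t_2-\tau-\ve\tau=(1{+}\ve)(t_2-\tau)\geq0$, and $\mathcal A\geq0$ from the sharper bound $\rho(\tau)\geq(\tau/t_1)\rho(t_1)$ (monotonicity of $s\mapsto\rho(s)/s$) combined with the identity $\big((1{+}\ve)t_2-\tau\big)(t_1{+}\tpr_1)-\ve t_1(t_2{+}\tpr_1)=(t_1{+}\tpr_1)(t_2{-}\tau)+\ve\tpr_1(t_2{-}t_1)\geq0$; the degenerate cases $t_1=0$ or $\tpr_1=0$ are immediate. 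With Theorem~\ref{theo:MRLMTP2Tail} this will give that $C_{\mu^\ve}$ is MTP$_2$. The crux is precisely this mixed interval: one cannot afford to replace $C_\nu(x)$ by the cruder lower bound $\rho(t_1)$ (which already fails to yield $\mathcal A\geq0$ when $\ve$ is large), and must invoke the convexity of $\rho$ in the stronger form $t_1\rho(\tau)\geq\tau\rho(t_1)$; everything else reduces to the total positivity and super/sub-modularity of $1/(t+\tpr)$ and $t\tpr/(t+\tpr)$ already exploited in the paper.
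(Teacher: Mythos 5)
Your proposal is correct and follows essentially the same route as the paper: reduce to the three pairwise TP$_2$ conditions via Theorem~\ref{theo:MRLMTP2Tail}, obtain the two conditions involving $x$ from the MRL property of $\left(\mu^\ve_\mt\right)$ computed through the Hardy-Littlewood functions, and verify TP$_2$ in $(t,\tpr)$ by splitting on the position of $x$ relative to $r-t_2\leq r-t_1\leq r+(1+\ve)\tpr_1$. Your handling of the mixed interval $r-t_2<x\leq r-t_1$ by expanding in $d=\tpr_2-\tpr_1$ is more computational than the paper's two-step bounding of the determinant, but it rests on exactly the same key inequality, namely $C_\nu(x)/(r-x)\geq C_\nu(r-t_1)/t_1$ (monotonicity of $t\longmapsto C_\nu(r-t)/t$, coming from the convexity of $C_\nu$ and $C_\nu(r)=0$), so the two arguments coincide in substance.
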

\begin{proof}
We start by showing that $\left(\mu_\mt^\ve,\mt\in\R_+^2\right)$ is a MRL process. Let $\Psi_\nu$ and $C_\nu$ denote the Hardy-Littlewood and the integrated survival functions of $\nu$ respectively. Let $\wtb$ be the function defined by  
\begin{align*}
\forall\,t\in\R_+,\quad\wtb(t)=C_\nu(r-t)=\int_{[r-t,r]}(y-r+t)\nu(dy).
\end{align*}
Since $C_\nu$ is nonnegative and convex, $t\longmapsto\dfrac{\wtb(t)}{t}$ is nonnegative and non-decreasing on $\R_+^\ast$. Moreover, $(t,\tpr)\longmapsto\dfrac{t\tpr}{t+\tpr}$ is nonnegative, non-decreasing, supermodular and TP$_2$ on $\R_+^\ast\times\R_+^\ast$. Then 
$$
(t,\tpr)\longmapsto\frac{\tpr\wtb(t)}{t+\tpr}\text{ is non-decreasing, supermodular and TP}_2\text{ on }\R_+^\ast\times\R_+^\ast.
$$
For every $\mt\in\R_+^2$, we denote by $\Psi_{\mu_\mt^\ve}$ the Hardy-Littlewood function of $\mu_\mt^\ve$. Then, by observing that $\wtb(t)=(t+\tpr)\beta_\mt$, we have, for every $x\in\R$,
$$
\Psi_{\mu_\mt^\ve}(x)=\left\{
\begin{array}{ll}
\Psi_\nu(x)+\dfrac{\ve\tpr\wtb(t)}{(t+\tpr)\nu([x,r])}&\text{if }
x\leq r-t,\\
r+(1+\ve)\tpr&\text{if }r-t<x\leq r+(1+\ve)\tpr,\\
x&\text{if }x>r+(1+\ve)\tpr.
\end{array}
\right.
$$
Let $\mt_1\leq\mt_2$ be elements of $\left(\R_+^\ast\right)^2$. We have
$$
\R=]-\infty,r-t_2]\cup]r-t_2,r-t_1]\cup]r-t_1,r+(1+\ve)\tpr_1] \cup]r+(1+\ve)\tpr_1,+\infty[.
$$
Let $x$ be a real number. If $x\in]-\infty,r-t_2]$, then $\Psi_{\mu_{\mt_1}^\ve}(x)\leq \Psi_{\mu_{\mt_2}^\ve}(x)$ since $(t,\tpr)\longmapsto\dfrac{\tpr\wtb(t)}{t+\tpr}$ is non-decreasing on $\R_+^\ast\times\R_+^\ast$. If $x\in]r-t_2,r-t_1]$, then, as $\wtb(t_1)\leq t_1\nu([r-t_1,r])$ and $\Psi_\nu(x)\leq r$,
$$
\Psi_{\mu_{\mt_1}^\ve}(x)=\Psi_\nu(x)+\dfrac{\ve\tpr_1\wtb(t_1)}{(t_1+\tpr_1)\nu([x,r])}\leq r+\ve\tpr_1\leq r+(1+\ve)\tpr_2=\Psi_{\mu_{\mt_2}^\ve}(x).
$$
If $x\in]r-t_1,r+(1+\ve)\tpr_1]$, then $\Psi_{\mu_{\mt_1}^\ve}(x)=r+(1+\ve)\tpr_1\leq r+(1+\ve)\tpr_2=\Psi_{\mu_{\mt_2}^\ve}(x)$. If $x>r+(1+\ve)\tpr_1$, then, by definition of $\Psi_{\mu_{\mt_2}^\ve}$, $\Psi_{\mu_{\mt_1}^\ve}(x)=x\leq\Psi_{\mu_{\mt_2}^\ve}(x)$. Thus, $\left(\mu_\mt^\ve,\mt\in\R_+^2\right)$ is a MRL process which implies that the integrated survival function $C_{\mu^\ve}$ of $\left(\mu_\mt^\ve,\mt\in\R_+^2\right)$ defined on $\left(\R_+^\ast\right)^2$ by 
\begin{equation}\label{eq:DefDTailMTP2ModifNu}
C_{\mu^\ve}(t,\tpr,x)=\left\{
\begin{array}{ll}
C_\nu(x)+\dfrac{\ve\tpr\wtb(t)}{t+\tpr}&\text{if }x<r-t,\\
\dfrac{\wtb(t)(r+(1+\ve)\tpr-x)}{t+\tpr}&\text{if }r-t\leq x<r+(1+\ve)\tpr,\\
0&\text{if }x\geq r+(1+\ve)\tpr
\end{array}
\right.
\end{equation}
is TP$_2$ in $(t,x)$ when $\tpr$ is fixed, and TP$_2$ in $(\tpr,x)$ when $t$ is fixed. Let us prove that $C_{\mu^\ve}$ is also TP$_2$ in $(t,\tpr)$ when $x$ is fixed which, by Theorem \ref{theo:MRLMTP2Tail}, entails that $C_{\mu^\ve}$ is MTP$_2$. 

We fix $t_1\leq t_2$, $\tpr_1\leq\tpr_2$, and $x\in\R$. Suppose that $x\in]-\infty,r-t_2[$. Since the function $(t,\tpr)\longmapsto\dfrac{\tpr\wtb(t)}{t+\tpr}$ is supermodular and TP$_2$ on $\R_+^\ast\times\R_+^\ast$, then
$$
(t,\tpr)\longmapsto C_\nu(x)+\dfrac{\ve\tpr\wtb(t)}{t+\tpr}\text{ is TP}_2\text{ on }\R_+^\ast\times\R_+^\ast,
$$
and, as in Proposition \ref{prop:TheoMRLMTP2}, we show that
\begin{equation}\label{eq:MTP2TailModif}
C_{\mu^\ve}(t_1,\tpr_1,x)C_{\mu^\ve}(t_2,\tpr_2,x)-C_{\mu^\ve}(t_1,\tpr_2,x)C_{\mu^\ve}(t_2,\tpr_1,x)\geq0.
\end{equation}
If $x\in[r-t_2,r-t_1[$, then
\begin{align*}
&C_{\mu^\ve}(t_1,\tpr_1,x)C_{\mu^\ve}(t_2,\tpr_2,x)-C_{\mu^\ve}(t_1,\tpr_2,x)C_{\mu^\ve}(t_2,\tpr_1,x)\\
&=\left(C_\nu(x)+\frac{\ve\tpr_1\wtb(t_1)}{t_1+\tpr_1}\right)\left(\dfrac{\wtb(t_2)(r+(1+\ve)\tpr_2-x)}{t_2+\tpr_2}\right)- \\
&\qquad\left(C_\nu(x)+\frac{\ve\tpr_2\wtb(t_1)}{t_1+\tpr_2}\right)\left(\dfrac{\wtb(t_2)(r+(1+\ve)\tpr_1-x)}{t_2+\tpr_1}\right)\\
&=C_\nu(x)\wtb(t_2)\left(\dfrac{r+(1+\ve)\tpr_2-x}{t_2+\tpr_2}-\dfrac{r+(1+\ve)\tpr_1-x}{t_2+\tpr_1}\right)+\\
&\qquad\ve\wtb(t_1)\wtb(t_2)\left(\frac{\tpr_1(r+(1+\ve)\tpr_2-x)}{(t_2+\tpr_2)(t_1+\tpr_1)}-\frac{\tpr_2(r+(1+\ve)\tpr_1-x)}{(t_1+\tpr_2)(t_2+\tpr_1)}\right).
\end{align*}
Since $x\geq r-t_2$, and since $t\longmapsto\dfrac{t}{t+\tpr_2}$ is non-decreasing on $\R_+$, we have
\begin{align*}
&\dfrac{r+(1+\ve)\tpr_2-x}{t_2+\tpr_2}-
\dfrac{r+(1+\ve)\tpr_1-x}{t_2+\tpr_1}\\
&=\frac{((1+\ve)t_2-r+x)(\tpr_2-\tpr_1)}{(t_2+\tpr_2)(t_2+\tpr_1)}\geq\frac{\ve t_2(\tpr_2-\tpr_1)}{(t_2+\tpr_2)(t_2+\tpr_1)} \geq\frac{\ve t_1(\tpr_2-\tpr_1)}{(t_1+\tpr_2)(t_2+\tpr_1)}.
\end{align*}
Moreover, the TP$_2$ property of $(t,\tpr)\longmapsto\dfrac{1}{t+\tpr}$ on $\R_+^\ast\times\R_+^\ast$ yields
$$
\frac{\tpr_1(r+(1+\ve)\tpr_2-x)}{(t_1+\tpr_1)(t_2+\tpr_2)}\geq \frac{\tpr_1(r+(1+\ve)\tpr_2-x)}{(t_1+\tpr_2)(t_2+\tpr_1)}.
$$
Hence,
\begin{align*}
&\frac{\tpr_1(r+(1+\ve)\tpr_2-x)}{(t_2+\tpr_2)(t_1+\tpr_1)}-\frac{\tpr_2(r+(1+\ve)\tpr_1-x)}{(t_1+\tpr_2)(t_2+\tpr_1)}\\
&\geq \frac{\tpr_1(r+(1+\ve)\tpr_2-x)}{(t_1+\tpr_2)(t_2+\tpr_1)}-\frac{\tpr_2(r+(1+\ve)\tpr_1-x)}{(t_1+\tpr_2)(t_2+\tpr_1)}=-\frac{(r-x)(\tpr_2-\tpr_1)}{(t_1+\tpr_2)(t_2+\tpr_1)}.
\end{align*}
Thus, since $x\leq r-t_1$, and since $t\longmapsto\dfrac{\wtb(t)}{t}$ is non-decreasing on $\R_+^\ast$,
\begin{align*}
&C_{\mu^\ve}(t_1,\tpr_1,x)C_{\mu^\ve}(t_2,\tpr_2,x)-C_{\mu^\ve}(t_1,\tpr_2,x)C_{\mu^\ve}(t_2,\tpr_1,x)\\
&\geq\frac{\ve t_1(r-x)\wtb(t_2)(\tpr_2-\tpr_1)}{(t_1+\tpr_2)(t_2+\tpr_1)}\left(\frac{C_\nu(x)}{r-x}-\frac{\wtb(t_1)}{t_1}\right)\\
&=\frac{\ve t_1(r-x)\wtb(t_2)(\tpr_2-\tpr_1)}{(t_1+\tpr_2)(t_2+\tpr_1)}\left(\frac{\wtb(r-x)}{r-x}-\frac{\wtb(t_1)}{t_1}\right)\geq0.
\end{align*}
If $x\in[r-t_1,r+(1+\ve)\tpr_1[$, then, as $x\in[r-t_2,r+(1+\ve)\tpr_2[$ and as
$$
(t,\tpr)\longmapsto\dfrac{\wtb(t)(r+(1+\ve)\tpr-x)}{t+\tpr}\text{ is TP}_2\text{ on }\R_+^\ast\times\R_+^\ast,
$$ Inequality (\ref{eq:MTP2TailModif}) still holds. \\
Finally, if $x\geq r+(1+\ve)\tpr_1$, then (\ref{eq:MTP2TailModif}) still holds since $C_{\mu^\ve}(t_1,\tpr_1,x)=0=C_{\mu^\ve}(t_2,\tpr_1,x)$.\\
Moreover, (\ref{eq:MTP2TailModif}) remains valid if $t_1$, or $\tpr_1$, or both $t_1$ and $\tpr_1$ equal zero.
\end{proof}
\begin{rem}
Note that the integrated survival function $C_{\mu^\ve}$ given by (\ref{eq:DefDTailMTP2ModifNu}) does not have interval support, and then Proposition 2.1 of \citep{KaR80} does not apply to $C_{\mu^\ve}$. Indeed, if $(\mt_1,x_1)$ and $(\mt_2,x_2)$ satisfy $0<t_1<t_2$ and $x_1<\tpr_1+(1+\ve)r<x_2<\tpr_2+(1+\ve)r$, then $(\mt_1,x_1)\leq(\mt_1,x_2)\leq(\mt_2,x_2)$, $C_{\mu^\ve}(\mt_1,x_1)>0$, $C_{\mu^\ve}(\mt_2,x_2)>0$ and $C_{\mu^\ve}(\mt_1,x_2)=0$.
\end{rem} 
\begin{rem}
For every $\ve\in(0,1)$, let $\left(\mu_\mt^\ve,\mt\in\R_+^2\right)$ be the process given by (\ref{eq:ExaMTP2ModifNu}). 
\item[1.] Let $T_{\mu_\mt^\ve}$ denote the Cox-Hobson solution to the Skorokhod embedding problem for $\mu_\mt^\ve$. If $(B_v,v\geq0)$ is a standard Brownian motion started at $0$, then $\left(B_{T_{\mu_\mt^\ve}},\mt\in\R_+^2\right)$ is a two-parameter submartingale associated to $\left(\mu_\mt^\ve,\mt\in\R_+^2\right)$.
\item[2.] If $\nu^\ve=\left(\nu^\ve_\mt,\mt\in[0,1]\times\R_+\right)$ is the process given by
$$
\forall\,\mt\in\R^2_+,\quad\nu^\ve_\mt=(1-t)\mu^\ve_{(1,\tpr)}+t\mu^\ve_{(2,\tpr)},
$$
then $(\nu^\ve_\mt,\mt\in\R^2_+)$ has MTP$_2$ integrated survival function.
\end{rem}
The interest of the MTP$_2$ property relies on the fact that there are several transformations which preserve this property. Therefore, from a given process with MTP$_2$ integrated survival function, one may generate several other processes which satisfy the same MTP$_2$ condition. For instance, the Point 2. of the preceding remark provides a transformation which preserves the MTP$_2$ property.
\subsection{MRL processes obtained by subordination}
We exhibit several subordinated processes with MTP$_2$ integrated survival functions. Precisely, from a given MTP$_2$ integrated survival function, we generate  many other integrated survival functions using a well-known composition formula. We exploit total positivity properties of certain $\R_+$-valued Markov processes. The following results taken from \citep{Ka64} concerns $\R_+$-valued Markov processes which have TP$_2$ transition kernels.
\begin{theorem}\cite[\text{Theorem 5.2.}]{Ka64}\label{theo:TP2MarkovCS}
Let $(p_t,t\in\R_+^\ast)$ be the transition densities of a right-continuous time-homogeneous $\R_+$-valued Markov process started at zero. Suppose that
\begin{equation}\label{eq:TP2MarkovCS}
\forall\,t\in\R_+^\ast,\quad p_t\text{ is continuous and TP}_2\text{ on }\R_+\times\R_+.
\end{equation}
Then $(t,\lambda)\longmapsto p_t(0,\lambda)$ and $(t,\lambda)\longmapsto p_t(\lambda,0)$ are TP$_2$ on $\R_+^\ast\times\R_+$.
\end{theorem}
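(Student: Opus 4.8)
The plan is to derive both total-positivity assertions directly from the Chapman--Kolmogorov equation and the spatial TP$_2$ hypothesis, the essential point being that $0$ is the least element of the state space $\R_+$. Since the process is right-continuous and time-homogeneous, $(p_t)_{t>0}$ is a genuine family of transition densities, and the continuity of each $p_t$ makes the Chapman--Kolmogorov identity hold pointwise:
\begin{equation}\label{eq:CKproof}
\forall\,s,u>0,\ \forall\,x,y\in\R_+,\qquad p_{s+u}(x,y)=\int_{\R_+}p_s(x,z)\,p_u(z,y)\,dz .
\end{equation}

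For the first assertion, fix $0<t_1<t_2$, set $h=t_2-t_1$ and fix $0\le\lambda_1<\lambda_2$; the claim is that
$$
\Delta:=p_{t_1}(0,\lambda_1)\,p_{t_2}(0,\lambda_2)-p_{t_1}(0,\lambda_2)\,p_{t_2}(0,\lambda_1)\ \ge\ 0 .
$$
First I would expand $p_{t_2}(0,\lambda_i)$ using (\ref{eq:CKproof}), choosing the \emph{last} step to have length $t_1$, i.e. $p_{t_2}(0,\lambda)=\int_{\R_+}p_h(0,z)\,p_{t_1}(z,\lambda)\,dz$; inserting this into $\Delta$ and pulling $p_{t_1}(0,\lambda_i)$ inside the integral by Tonelli,
$$
\Delta=\int_{\R_+}p_h(0,z)\,\Big[\,p_{t_1}(0,\lambda_1)\,p_{t_1}(z,\lambda_2)-p_{t_1}(0,\lambda_2)\,p_{t_1}(z,\lambda_1)\,\Big]\,dz .
$$
For every $z$ in the range of integration one has $0\le z$, so the bracket is exactly the $2\times2$ minor of $p_{t_1}$ on the rows $0\le z$ and the columns $\lambda_1<\lambda_2$; by the TP$_2$ property of $p_{t_1}$ it is nonnegative. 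Since $p_h(0,\cdot)\ge0$, the integrand is nonnegative, whence $\Delta\ge0$, which is the asserted TP$_2$ inequality on $\R_+^\ast\times\R_+$.

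For the second assertion I would run the mirror-image computation: with the same $t_1<t_2$, $h=t_2-t_1$ and $0\le\lambda_1<\lambda_2$, expand $p_{t_2}(\lambda_i,0)$ via (\ref{eq:CKproof}) choosing the \emph{first} step to have length $t_1$, i.e. $p_{t_2}(\lambda,0)=\int_{\R_+}p_{t_1}(\lambda,z)\,p_h(z,0)\,dz$, and obtain
\begin{align*}
&p_{t_1}(\lambda_1,0)\,p_{t_2}(\lambda_2,0)-p_{t_1}(\lambda_2,0)\,p_{t_2}(\lambda_1,0)\\
&\qquad=\int_{\R_+}p_h(z,0)\,\Big[\,p_{t_1}(\lambda_1,0)\,p_{t_1}(\lambda_2,z)-p_{t_1}(\lambda_2,0)\,p_{t_1}(\lambda_1,z)\,\Big]\,dz .
\end{align*}
Again, because $0\le z$ throughout the integral, the bracket is the nonnegative $2\times2$ minor of $p_{t_1}$ on the rows $\lambda_1<\lambda_2$ and the columns $0\le z$, so the left-hand side is $\ge0$.

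I expect no serious obstacle; the only points needing care are routine. One has to justify that (\ref{eq:CKproof}) may be used in density form at the specific points $\lambda_1,\lambda_2$ (this is where the continuity of $p_t$ is used) and that the integrals are finite --- the latter is automatic, since $\int_{\R_+}p_h(0,z)p_{t_1}(z,\lambda)\,dz=p_{t_2}(0,\lambda)<\infty$ and similarly on the other side. One also has to keep track of which of the three time increments carries which spatial variable, so that the $2\times2$ bracket that appears is a minor of the \emph{single} kernel $p_{t_1}$ and not a mixed expression in two distinct kernels; this is the only genuinely substantive choice in the argument. Note that no strict-positivity hypothesis is needed, because the proof manipulates only the determinant, never quotients. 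Finally, it is worth stressing where the assumption that the process starts at $0$ (and that $0$ is the minimum of $\R_+$) enters: if $0$ were replaced by an interior point $x_0$, the weight $p_h(0,z)$ would become $p_h(x_0,z)$ and, for $z<x_0$, the associated minor of $p_{t_1}$ would flip sign, destroying the constant sign of the integrand --- which is exactly why total positivity in $(t,\lambda)$ can only be claimed at the boundary point of the state space.
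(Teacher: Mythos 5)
Your proof is correct. The paper itself gives no argument for this statement --- it is quoted verbatim from Karlin (1964, Theorem 5.2) --- and your derivation is essentially Karlin's original one: split $t_2=h+t_1$ by Chapman--Kolmogorov so that the kernel $p_{t_1}$ carries both spatial arguments, pull the difference under the integral, and observe that the resulting bracket is a $2\times2$ minor of the single TP$_2$ kernel $p_{t_1}$ whose sign is constant precisely because $0$ is the least element of the state space. The only point that deserves the care you already flag is the pointwise (rather than a.e.) validity of the Chapman--Kolmogorov identity for the chosen continuous versions of the densities; granting that, the argument is complete, and your closing remark about why the conclusion fails at an interior starting point is a genuine insight rather than padding.
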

There are many time-homogeneous Markov processes with TP$_2$ transition densities. For instance, the transition densities (with respect to the Lebesgue measure) of one-dimensional diffusions are TP$_2$. Here is an analog of Theorem \ref{theo:TP2MarkovCS} for continuous-time Markov chains which have TP$_2$ transition matrices. We mention that this class of Markov chains includes birth-and-death processes.
\begin{theorem}\label{theo:TP2MarkovChainCS}\citep[\text{Theorem 4.3, Points (i) and (ii).}]{Ka64} Let $(P_t,t\in\R_+)$ be the transition matrices of a right-continuous time-homogeneous $\R_+$-valued Markov chain issued from zero. If
\begin{equation}\label{eq:TP2MarkovChainCS}
\forall\,t\geq0,\quad (i,j)\longmapsto P_t(i,j)\text{ is TP}_2\text{ on }\N\times\N,
\end{equation}
then $(t,i)\longmapsto P_t(0,i)$ and $(t,i)\longmapsto P_t(i,0)$ are TP$_2$ on $\R_+\times\N$.
\end{theorem}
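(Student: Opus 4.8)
The plan is to pass to the discrete-time skeleton of the chain and then argue by induction on the number of rows. First I would fix a mesh $\delta>0$ and set $A:=P_\delta$; by hypothesis $A$ is TP$_2$ on $\N\times\N$, and by the Chapman--Kolmogorov identity $P_{n\delta}=A^{n}$ for all $n\in\N$. Since the transition function of a right-continuous Markov chain is continuous in $t$, and since for commensurable times $t_1=p\delta<q\delta=t_2$ one has $P_{t_1}(0,\cdot)=A^{p}(0,\cdot)$ and $P_{t_2}(0,\cdot)=A^{q}(0,\cdot)$, it suffices to prove that the $\N\times\N$ matrix $(n,i)\longmapsto A^{n}(0,i)$ is TP$_2$: a general pair $t_1<t_2$ is then approximated by commensurable pairs and the corresponding $2\times2$ minor inequalities are passed to the limit. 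The second conclusion, about $(t,i)\longmapsto P_t(i,0)$, is handled by the same scheme applied to $(n,i)\longmapsto A^{n}(i,0)$, using $A^{n+1}(i,0)=\sum_{j}A^{n}(j,0)\,A(i,j)$.

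The heart of the matter is the discrete claim, which I would prove by induction on $N$: the $(N+1)\times\N$ matrix $M_N$ with rows $A^{0}(0,\cdot),A^{1}(0,\cdot),\dots,A^{N}(0,\cdot)$ is TP$_2$. The case $N=0$ is trivial since $A^{0}(0,\cdot)=\delta_{0,\cdot}$. For the step, the basic composition formula for TP$_2$ kernels --- the case $n=m=l=1$ of Proposition \ref{prop:CompoMTP2}, with counting measure on $\N$ --- applied to the TP$_2$ matrix $M_N$ and the TP$_2$ kernel $A$ shows that $M_N\cdot A$, whose rows are $A^{1}(0,\cdot),\dots,A^{N+1}(0,\cdot)$, is TP$_2$. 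Hence every $2\times2$ minor of $M_{N+1}$ with both rows in $\{0,\dots,N\}$, or both in $\{1,\dots,N+1\}$, is nonnegative. The only minors left pair the Dirac row $A^{0}(0,\cdot)=\delta_{0,\cdot}$ with some $A^{k}(0,\cdot)$, $k\geq1$; for columns $i_1<i_2$ such a minor equals $\delta_{0,i_1}A^{k}(0,i_2)-\delta_{0,i_2}A^{k}(0,i_1)=\delta_{0,i_1}A^{k}(0,i_2)\geq0$, because $i_2\geq1$ forces $\delta_{0,i_2}=0$. Thus $M_{N+1}$ is TP$_2$ and the induction closes. The statement for $(n,i)\longmapsto A^{n}(i,0)$ follows in exactly the same way, the relevant boundary row being $A^{0}(\cdot,0)=\delta_{\cdot,0}$ and the composition formula being applied through the identity $A^{n+1}(i,0)=\sum_{j}A^{n}(j,0)\,A(i,j)$ together with the (equivalent) TP$_2$ property of $(j,i)\longmapsto A(i,j)$.

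I expect the induction step to be the main obstacle --- more precisely, the recognition that a frontal attack fails. One cannot compare two arbitrary rows $P_{t_1}(0,\cdot)$ and $P_{t_2}(0,\cdot)$ by a single application of the composition formula: neither is the image of the other under a TP$_2$ kernel, so writing $P_{t_2}(0,\cdot)=P_{t_1}(0,\cdot)\,P_{t_2-t_1}$ does not exhibit the $2\times\N$ matrix formed by the two rows as a product [TP$_2$]$\,\cdot\,$[TP$_2$]. Building the rows up one at a time starting from the degenerate row $\delta_0$ is what makes the argument go through, because then the composition formula is only ever asked to produce the shifted block of rows, while the finitely many extra minors created at each stage involve the Dirac row and are nonnegative for trivial reasons. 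Given this, the return to continuous time is routine, requiring nothing more than continuity of $t\mapsto P_t$ and a density argument on the set of commensurable time-pairs. (Deducing the statement from its diffusion analogue, Theorem \ref{theo:TP2MarkovCS}, does not seem to help, since $\N$ carries no suitable continuous structure to transfer.)
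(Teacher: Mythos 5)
Your derivation is correct, but two things are worth saying. First, the paper offers no proof of this statement to compare against: it is imported verbatim from Karlin (1964, Theorem 4.3), so what you have written is a self-contained reconstruction from the composition formula (Proposition \ref{prop:CompoMTP2}) rather than a variant of an argument in the text. Second, your route through the discrete skeleton is much longer than necessary, and the obstruction you describe in your final paragraph is not actually there. The Dirac-row observation with which you close your induction already proves the theorem in one step, directly in continuous time: for $0\le t_1<t_2$, Chapman--Kolmogorov gives the factorization
\[
\begin{pmatrix} P_{t_1}(0,\cdot)\\ P_{t_2}(0,\cdot)\end{pmatrix}
=\begin{pmatrix} \delta_{0,\cdot}\\ P_{t_2-t_1}(0,\cdot)\end{pmatrix}\cdot P_{t_1},
\]
in which the first factor is TP$_2$ for exactly the trivial reason you give (for $k_1<k_2$ the minor $\delta_{0,k_1}P_{t_2-t_1}(0,k_2)-\delta_{0,k_2}P_{t_2-t_1}(0,k_1)$ reduces to its first, nonnegative, term) and the second factor is TP$_2$ by hypothesis; a single application of the basic composition formula concludes. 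The factorization you dismiss, $P_{t_2}(0,\cdot)=P_{t_1}(0,\cdot)P_{t_2-t_1}$, is indeed useless, but the right move is to put $P_{t_1}$ on the \emph{right} and pair $\delta_0$ with $P_{t_2-t_1}(0,\cdot)$, not $P_{t_1}(0,\cdot)$ with $P_{t_2}(0,\cdot)$. The second assertion is symmetric: write the $\N\times2$ array with columns $P_{t_1}(\cdot,0)$ and $P_{t_2}(\cdot,0)$ as $P_{t_1}$ times the array with columns $\delta_{\cdot,0}$ and $P_{t_2-t_1}(\cdot,0)$. Besides being shorter, the direct argument removes the discretization, the induction, the approximation of real times by commensurable ones, and the continuity of $t\mapsto P_t(i,j)$ that your limit passage invokes --- true for standard transition functions, but an ingredient the theorem does not need.
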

We now provide a way to generate many integrable processes which have MTP$_2$ integrated survival functions.
\begin{theorem}\label{theo:MTP2TailSub}
Let $\left(\mu_{(\lambda,\lambda')},(\lambda,\lambda')\in\R_+^2\right)$ be an integrable process which has MTP$_2$ integrated survival function denoted by $C_{\mu}$. Suppose that there exists a positive constant $K$ satisfying:
\begin{equation}\label{eq:MTP2TailIntCond}
\forall\,(\lambda,\lambda')\in\R_+^2,\quad\int_{\R}|y|\mu_{(\lambda,\lambda')}(dy)\leq K(1+\lambda)(1+\lambda').
\end{equation}
\item[1.]Let $\left(p_t,t\in\R_+^\ast\right)$ and $\left(q_t,t\in\R_+^\ast\right)$ be the transition densities of two right-continuous, integrable and time-homogeneous $\R_+$-valued Markov processes issued from zero. Suppose that, for every $t\geq0$, $p_t$ and $q_t$ satisfy (\ref{eq:TP2MarkovCS}). Then the process $\left(\sigma_\mt,\mt\in(\R_+^\ast)^2\right)$ given by:
$$
\forall\,(\mt,x)\in(\R_+^\ast)^2\times\R,\quad\sigma_\mt([x,+\infty[)=\int\!\!\!\int_{\R_+^2}\mu_{(\lambda,\lambda')}([x,+\infty[)p_t(0,\lambda)q_{\tpr}(0,\lambda')d\lambda d\lambda'
$$
is integrable, and it has a MTP$_2$ integrated survival function.
\item[2.]Let $(P_t,t\in\R_+)$ and $(Q_t,t\in\R_+)$ be the transition matrices of two right-continuous, integrable and time-homogeneous $\R_+$-valued Markov chains started at zero. If, for every $t\geq0$, $P_t$ and $Q_t$ fulfill Condition (\ref{eq:TP2MarkovChainCS}), then the process $\left(\sigma_\mt,\mt\in\R_+^2\right)$ defined by:
$$
\forall\,(\mt,x)\in\R_+^2\times\R,\quad\sigma_\mt([x,+\infty[)=\sum\limits_{i\in\N}\sum\limits_{j\in\N}\mu_{(i,j)}([x,+\infty[)P_t(0,i)Q_{\tpr}(0,j)
$$ 
is integrable, and its integrated survival function is MTP$_2$.
\end{theorem}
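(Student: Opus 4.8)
The plan is to reduce both statements to the composition formula for MTP$_2$ functions (Proposition \ref{prop:CompoMTP2}) by rewriting $C_{\sigma_\mt}$ as an iterated convolution-type integral, and to obtain the $\R_+$-valued building blocks from Theorems \ref{theo:TP2MarkovCS} and \ref{theo:TP2MarkovChainCS}. I will treat Part 1 in detail; Part 2 is identical with integrals replaced by sums and transition densities by transition matrices.

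First I would check integrability and that $C_{\sigma_\mt}$ is well defined. Starting from the defining formula for $\sigma_\mt([x,+\infty[)$, I would integrate in $x$ (Tonelli, everything being nonnegative) to get
\begin{equation}\label{eq:CsigmaRep}
C_{\sigma}(\mt,x)=\int\!\!\!\int_{\R_+^2}C_\mu\big((\lambda,\lambda'),x\big)\,p_t(0,\lambda)\,q_{\tpr}(0,\lambda')\,d\lambda\,d\lambda',
\end{equation}
valid since $\int_\R|y|\mu_{(\lambda,\lambda')}(dy)\leq K(1+\lambda)(1+\lambda')$ by \eqref{eq:MTP2TailIntCond} and since the Markov processes $(p_t)$ and $(q_t)$ are assumed integrable, so that $\int_{\R_+^2}(1+\lambda)(1+\lambda')p_t(0,\lambda)q_{\tpr}(0,\lambda')\,d\lambda\,d\lambda'<\infty$; this finiteness also shows $\sigma_\mt$ is an integrable probability measure (total mass one follows by taking $x\to-\infty$ after subtracting the linear part, or directly from the definition of $\sigma_\mt([x,+\infty[)$ as $x\to-\infty$). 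One should also verify that $C_\sigma(\mt,\cdot)$ satisfies the three properties of Proposition 2.5, so that $\sigma_\mt$ genuinely is a probability measure's integrated survival function, but this is routine: convexity and nonnegativity in $x$ pass through the integral, the limit at $+\infty$ is zero by dominated convergence, and the affine asymptotics at $-\infty$ come from $\int_{\R_+^2}m_{\mu_{(\lambda,\lambda')}}p_t(0,\lambda)q_{\tpr}(0,\lambda')\,d\lambda\,d\lambda'$.

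Next comes the total positivity. By hypothesis $C_\mu$ is MTP$_2$ in the three variables $(\lambda,\lambda',x)$. By Theorem \ref{theo:TP2MarkovCS}, $(t,\lambda)\longmapsto p_t(0,\lambda)$ is TP$_2$ on $\R_+^\ast\times\R_+$, and likewise $(\tpr,\lambda')\longmapsto q_{\tpr}(0,\lambda')$ is TP$_2$ on $\R_+^\ast\times\R_+$. I would first absorb the $\lambda'$-integration: set
\begin{equation}\label{eq:step1}
g\big((\lambda,t,\tpr),x\big)=\int_{\R_+}C_\mu\big((\lambda,\lambda'),x\big)\,q_{\tpr}(0,\lambda')\,d\lambda',
\end{equation}
and observe that $(\lambda,\lambda',x,\tpr)\longmapsto C_\mu((\lambda,\lambda'),x)\,q_{\tpr}(0,\lambda')$ is MTP$_2$ (a product of an MTP$_2$ function in $(\lambda,\lambda',x)$ with a TP$_2$, hence MTP$_2$, function in $(\lambda',\tpr)$, the two sharing only the coordinate $\lambda'$; products of MTP$_2$ functions are MTP$_2$ by the lattice inequality applied coordinatewise). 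Applying Proposition \ref{prop:CompoMTP2} with the role of the ``middle'' coordinate played by $\lambda'$ (and $\R_+$ equipped with Lebesgue measure) shows $g$ is MTP$_2$ in $(\lambda,t,\tpr,x)$. Then multiply by $p_t(0,\lambda)$, which is TP$_2$ in $(t,\lambda)$, hence MTP$_2$, and again the product $g((\lambda,t,\tpr),x)\,p_t(0,\lambda)$ is MTP$_2$ in $(\lambda,t,\tpr,x)$; integrating out $\lambda$ via Proposition \ref{prop:CompoMTP2} once more yields that
$$
C_\sigma(\mt,x)=\int_{\R_+}g\big((\lambda,t,\tpr),x\big)\,p_t(0,\lambda)\,d\lambda
$$
is MTP$_2$ in $(t,\tpr,x)$, which is exactly the assertion. (Alternatively, one may do both integrations at once: the integrand in \eqref{eq:CsigmaRep}, as a function of $(\lambda,\lambda',t,\tpr,x)$, is a product of three MTP$_2$ factors — $C_\mu$ in $(\lambda,\lambda',x)$, $p_t(0,\lambda)$ in $(t,\lambda)$, $q_{\tpr}(0,\lambda')$ in $(\tpr,\lambda')$ — hence MTP$_2$, and a single application of the composition formula integrating out the pair $(\lambda,\lambda')$ against $\varrho=\mathrm{Leb}\times\mathrm{Leb}$ gives the result.)

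I expect the main obstacle to be purely bookkeeping: one must be careful that the factorization \eqref{eq:CsigmaRep} puts $C_\mu$ in the precise ``$f(\mathbf{x},\mathbf{y})$, $g(\mathbf{y},\mathbf{z})$'' shape required by Proposition \ref{prop:CompoMTP2}, i.e. that the integration variables $(\lambda,\lambda')$ are exactly the shared block $\mathbf{y}$ while $x$ sits on the $f$-side and $(t,\tpr)$ on the $g$-side, and that the product of an MTP$_2$ function with a TP$_2$ function depending on an overlapping coordinate is again MTP$_2$ (a direct check from the definition, comparing $f(\mathbf u\wedge\mathbf v)f(\mathbf u\vee\mathbf v)$ with $f(\mathbf u)f(\mathbf v)$ coordinate by coordinate). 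The only genuinely analytic point is the integrability bookkeeping needed to invoke Tonelli and dominated convergence, which is handled by \eqref{eq:MTP2TailIntCond} together with the assumed integrability of the subordinating Markov processes; note this is where the hypothesis $\int_\R|y|\mu_{(\lambda,\lambda')}(dy)\leq K(1+\lambda)(1+\lambda')$ is used, not in the positivity argument. For Part 2 one repeats verbatim with $\R_+$ replaced by $\N$, Lebesgue measure by counting measure, Theorem \ref{theo:TP2MarkovCS} by Theorem \ref{theo:TP2MarkovChainCS}, and no continuity hypothesis is needed.
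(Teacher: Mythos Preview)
Your proof is correct and matches the paper's approach: both represent $C_\sigma$ as an iterated integral of $C_\mu$ against the transition kernels, invoke Theorem \ref{theo:TP2MarkovCS} (resp.\ Theorem \ref{theo:TP2MarkovChainCS}) for the TP$_2$ property of $(t,\lambda)\mapsto p_t(0,\lambda)$ and $(\tpr,\lambda')\mapsto q_{\tpr}(0,\lambda')$, and then apply the MTP$_2$ composition formula twice (the paper via its Corollary \ref{corol:CompoMTP2}). The only differences are cosmetic: the paper also spells out Borel-measurability of the integrands (deduced from monotonicity in $\lambda$), and your intermediate function $g$ carries a superfluous argument $t$.
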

To prove Theorem \ref{theo:MTP2TailSub}, we need the following classical result which is an immediate consequence of Proposition \ref{prop:CompoMTP2}.
\begin{corol}\label{corol:CompoMTP2}
Let $f:\,\R_+^2\times\R\to\R_+$ be MTP$_2$ and $g:\,\R_+^2\to\R_+$ be TP$_2$. Let $\rho$ be a $\sigma$-finite positive measure on $\R_+$ such that:
$$
\forall\,(\mt,x)\in\R_+^2\times\R,\quad\int_{\R_+}f(u,t,x)g(u,\tpr)\rho(du)\text{ is finite.}
$$
Then the function $h$ defined on $\R_+^2\times\R$ by
$$
h(\mt,x)=\int_{\R_+}f(u,t,x)g(u,\tpr)\rho(du)
$$
is MTP$_2$ on $\R_+^2\times\R$.
\end{corol}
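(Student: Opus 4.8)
The plan is to read Corollary \ref{corol:CompoMTP2} off Proposition \ref{prop:CompoMTP2} by a suitable choice of index sets, the only preliminary being the elementary observation that the MTP$_2$ property is invariant under permutation of the coordinates of a product space: relabelling the factors of $\prod_i\mathcal{I}_i$ permutes the entries of $\mathbf{x}\wedge\mathbf{y}$ and $\mathbf{x}\vee\mathbf{y}$ in the same way, so inequality (\ref{eq:MTP2}) is unaffected. In particular $f$ is MTP$_2$ as a function of $(u,t,x)$ if and only if it is MTP$_2$ as a function of $(t,x,u)$, and likewise any assertion about the resulting $h$ may be transported between the coordinate orderings $(t,x,\tpr)$ and $(t,\tpr,x)=(\mt,x)$.

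With this in hand I would invoke Proposition \ref{prop:CompoMTP2} with $n=2$, $m=1$, $l=1$, taking $\mathcal{I}_1=\R_+$ for the $t$-coordinate, $\mathcal{I}_2=\R$ for the $x$-coordinate, $\mathcal{J}_1=\R_+$ for the (integrated) $u$-coordinate, $\mathcal{K}_1=\R_+$ for the $\tpr$-coordinate, and $\varrho=\varrho_1=\rho$, which is $\sigma$-finite by hypothesis. The role of ``$f$'' in the proposition is played by $(t,x,u)\mapsto f(u,t,x)$, which is MTP$_2$ on $\mathcal{I}\times\mathcal{J}=\R_+\times\R\times\R_+$ precisely because $f$ is assumed MTP$_2$ on $\R_+^2\times\R$. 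The role of ``$g$'' is played by $(u,\tpr)\mapsto g(u,\tpr)$ on $\mathcal{J}\times\mathcal{K}=\R_+\times\R_+$; since this product has only two factors, being MTP$_2$ there is exactly being TP$_2$ on $\R_+^2$, which is the standing hypothesis on $g$. The finiteness assumption of the corollary is precisely what is needed for the composition $h(t,x,\tpr)=\int_{\R_+}f(u,t,x)\,g(u,\tpr)\,\rho(du)$ to be well defined. Proposition \ref{prop:CompoMTP2} then yields that $h$ is MTP$_2$ on $\mathcal{I}\times\mathcal{K}=\R_+\times\R\times\R_+$, and permuting the coordinates back to $(\mt,x)$ gives the claim.

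There is no analytic difficulty here; the entire content is the translation between the three-block composition scheme of Proposition \ref{prop:CompoMTP2} and the ``$(t,x)$ against $\tpr$, coupled through $u$'' picture of the corollary. The two points I would be careful to write out explicitly are the identification of ``MTP$_2$ in two variables'' with ``TP$_2$'' (so that the TP$_2$ hypothesis on $g$ legitimately enters the proposition in the role of an MTP$_2$ factor), and the permutation-invariance of MTP$_2$ (so that $f(u,t,x)$ may be regarded as a function on $\mathcal{I}\times\mathcal{J}$ with the $u$-coordinate sitting in $\mathcal{J}$ and the $(t,x)$-coordinates in $\mathcal{I}$, and conversely so that the output $h$ may be viewed back on $\R_+^2\times\R$).
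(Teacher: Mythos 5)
Your proposal is correct and is exactly the deduction the paper has in mind: the paper presents Corollary \ref{corol:CompoMTP2} as an immediate consequence of Proposition \ref{prop:CompoMTP2}, and your choice of blocks ($\mathcal{I}=\R_+\times\R$ for $(t,x)$, $\mathcal{J}=\R_+$ for $u$, $\mathcal{K}=\R_+$ for $\tpr$), together with the observation that MTP$_2$ is invariant under permutation of coordinates and coincides with TP$_2$ in two variables, is precisely how that specialization goes. Nothing is missing.
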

\begin{proof}[Proof of Theorem \ref{theo:MTP2TailSub}.]We prove only Point 1 since the proof of Point 2 is quite similar.\\ 
Since $C_\mu$
is TP$_2$ in $(\lambda,x)$ when $\lambda'$ is fixed, then, for every $(\lambda',x)$, $\lambda\longmapsto C_\mu(\lambda,\lambda',x)$ is non-decreasing, and then it is Borel-measurable. We deduce that, for every fixed $(\lambda',x)\in\R_+\times\R$, the function $\lambda\longmapsto\mu_{(\lambda,\lambda')}([x,+\infty[)$ is Borel-measurable. Indeed, one has
$$
\mu_{(\lambda,\lambda')}([x,+\infty[)=\lim\limits_{n\to+\infty} n\left(C_\mu\left(\lambda,\lambda',x-\frac{1}{n}\right)-C_\mu(\lambda,\lambda',x)\right).
$$
Then one may define the  process $\left(\eta_{(t,\lambda')},(t,\lambda')\in\R_+\times\R_+^\ast\right)$ by:
$$
\forall\,(t,\lambda',x)\in\R_+^\ast\times\R_+\times\R,\quad\eta_{(t,\lambda')}([x,+\infty[)=\int_{\R_+}\mu_{(\lambda,\lambda')}([x,+\infty[)p_t(0,\lambda)d\lambda.
$$
Moreover, as the Markov processes we deal with are integrable, the Fubini's theorem and Condition (\ref{eq:MTP2TailIntCond}) ensure that the process $\left(\eta_{(t,\lambda')},(t,\lambda')\in\R_+\times\R_+^\ast\right)$ is integrable and that its integrated survival function $C_\eta$ is given by:
$$
C_\eta(t,\lambda',x)=\int_{\R_+}C_{\mu}(\lambda,\lambda',x)p_t(0,\lambda)d\lambda.
$$
Because the function $(t,\lambda)\to p_t(0,\lambda)$ is TP$_2$ on $\R_+^\ast\times\R_+$, and because $C_{\mu}$ is MTP$_2$ on $\R_+^2\times\R$, we deduce from Corollary \ref{corol:CompoMTP2} that $C_\eta$ is MTP$_2$ on $\R_+^\ast\times\R_+\times\R$. Similarly, one shows that:
$$
\forall\,(t,x)\in\R_+^\ast\times\R,\quad \lambda'\longmapsto\eta_{(t,\lambda')}([x,+\infty[)\text{ is Borel-measurable,}
$$
and  defines the process $\left(\sigma_\mt,\mt\in(\R_+^\ast)^2\right)$ by:
$$
\forall\,(\mt,x)\in(\R_+^\ast)^2\times\R,\quad\sigma_{\mt}([x,+\infty[)=\int_{\R_+}\eta_{(t,\lambda')}([x,+\infty[)q_{\tpr}(0,\lambda')d\lambda'.
$$
Then it follows from Condition (\ref{eq:MTP2TailIntCond}) and from Fubini's theorem that $\left(\sigma_\mt,\mt\in(\R_+^\ast)^2\right)$ is integrable and that its integrated survival function $C_\sigma$ is given by:
$$
\forall\,(\mt,x)\in(\R_+^\ast)^2\times\R,\quad C_\sigma(\mt,x)=\int_{\R_+}C_\eta(t,\lambda',x)q_{\tpr}(0,\lambda')d\lambda'.
$$
Hence, since $(\tpr,\lambda')\longmapsto q_{\tpr}(0,\lambda')$ is TP$_2$ in $\R_+^\ast\times\R$ and since $C_{\eta}$ is MTP$_2$ on $\R_+^\ast\times\R_+\times\R$, then, by Corollary \ref{corol:CompoMTP2}, $C_\sigma$ is MTP$_2$ on $(\R_+^\ast)^2\times\R$. 
\end{proof}
\begin{exa}
Let $\ve\in(0,1)$ and let $\nu$ be an integrable process whose support admits a finite upper bound. Then the  process $\left(\mu_\mt^\ve,\mt\in\R_+^2\right)$ given in Proposition \ref{prop:ExaMTP2ModifNu} satisfies Condition (\ref{eq:MTP2TailIntCond}) of Theorem \ref{theo:MTP2TailSub}. Indeed, one has:
$$
\forall\,\mt\in\R_+^2,\quad\int_{\R_+}|y|\mu_\mt^\ve(dy)\leq K(1+t+\tpr),
$$
where one may choose $K=\max\left\{2,|r|+\int_{]-\infty,r]}|y|\nu(dy) \right\}$. As a consequence:
\item[1.]  If $(p_t,t\in\R_+^\ast)$ and $(q_t,t\in\R_+^\ast)$ are the transition densities of two right-continuous, integrable and time-homogeneous $\R_+$-valued Markov processes issued from zero, and if, for every $t\geq0$, $p_t$ and $q_t$ satisfy Condition (\ref{eq:TP2MarkovCS}), then the process $\left(\sigma_\mt^\ve,\mt\in(\R_+^\ast)^2\right)$ given by:
$$
\forall\,(\mt,x)\in(\R_+^\ast)^2\times\R,\quad\sigma_\mt([x,+\infty[)=\int\!\!\!\int_{\R_+^2}\mu_{(\lambda,\lambda')}([x,+\infty[)p_t(0,\lambda)q_{\tpr}(0,\lambda')d\lambda d\lambda'
$$
has a MTP$_2$ integrated survival function.
\item[2.]If $p$ and $q$ denote two TP$_2$ families of density functions on $\N$ such that, for every $n\in\N$, the sums $\sum_{i\in\N}ip(n,i)$ and $\sum_{i\in\N}iq(n,i)$ are finite, then the process $\left(\sigma_{(n,m)}^\ve,(n,m)\in\N^2\right)$ defined by:
$$
\forall\,(n,m,x)\in\N\times\N\times\R,\quad\sigma^\ve_{(n,m)}([x,+\infty[)=\sum\limits_{i\in\N}\sum\limits_{j\in\N}\mu^\ve_{(i,j)}([x,+\infty[)p(n,i)q(m,j)
$$
has MTP$_2$ integrated survival function. Here are some examples of such TP$_2$ families of density functions taken from \cite[Section 8]{Ka64}.
\begin{enumerate}
\item[(i)]Let $a\in(0,1)$ and let $p^{(a)}$ denote the family of binomial densities given by:
$$
\forall\,(n,i)\in\N\times\N,\quad p^{(a)}(n,i)=\left(
\begin{array}{c}
n\\i
\end{array}
\right)a^i(1-a)^{n-i},
$$
where $\left(\begin{array}{c}n\\i\end{array}\right)$, $(n,i)\in\N\times\N$ are the usual binomial coefficients. Then $p^{(a)}$ is TP$_2$ on $\N\times\N$.
\item[(ii)]For every $a\in(0,1)$, the function $p^{(a)}$ defined by:
$$
\forall\,(n,i)\in\N\times\N,\quad p^{(a)}(n,i)=\left(
\begin{array}{c}
n+i-1\\i
\end{array}
\right)a^n(1-a)^{i},
$$
is TP$_2$ on $\N\times\N$.
\item[(iii)]Let $a\in(0,1)$ and let $q^{(a)}$ be the function given by:
$$
\forall\,(n,i)\in\N\times\N,\quad q^{(a)}(n,i)=\left[
\begin{array}{c}
n\\i
\end{array}
\right]\frac{a^{(i^2+i)/2}}{\prod\limits_{l=1}^n(1+a^l)},
$$
where
$$
\left[
\begin{array}{c}
n\\i
\end{array}
\right]=\left\{
\begin{array}{cl}
1&\text{if }n=i=0,\\
\dfrac{(1-a^n)(1-a^{n-i})\cdots(1-a^{n-i+1})}{(1-a^i)(1-a^{i-1})\cdots(1-a)}&\text{if }1\leq i\leq n,\\
0&\text{otherwise.}
\end{array}
\right.
$$
Then $q^{(a)}$ is TP$_2$ on $\N\times\N$.
\end{enumerate}
\end{exa}

\subsection{MRL processes obtained by convolution} 
One may also generate many MRL processes using convolution transformations.
\begin{prop}\label{prop:ExaMRLMTPConv}
 Let $\left(\mu_\mt,\mt\in\R_+^2\right)$ be a MRL process, and let $f$ be a log-concave and positive density function which admits a finite first order moment. Then the process $\left(\xi_\mt,\mt\in\R_+^2\right)$ defined by:
$$
\forall\,\mt\in\R_+^2,\quad\xi_\mt(dy)=\left(\int_{\R}f(y-z)\mu_\mt(dz)\right)dy
$$
is a MRL process.
\end{prop}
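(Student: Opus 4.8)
The plan is to translate the statement into the language of integrated survival functions and to invoke the characterization of the two-parameter MRL ordering furnished by Theorem~\ref{theo:MRLTPP2} and Remark~\ref{rem:KemMRLTPP2}. First I would check that $\left(\xi_\mt,\mt\in\R_+^2\right)$ is a family of integrable probability measures: since $\xi_\mt=\mu_\mt\ast(f\,dy)$, Tonelli's theorem gives $\int_\R\xi_\mt(dy)=1$, and $\int_\R|y|\,\xi_\mt(dy)\le\int_\R|w|f(w)\,dw+\int_\R|z|\,\mu_\mt(dz)<\infty$ because both $f$ and each $\mu_\mt$ have a finite first moment. Thus $C_\xi$, the integrated survival function of $\left(\xi_\mt,\mt\in\R_+^2\right)$, is well defined, and by Theorem~\ref{theo:MRLTPP2} it suffices to prove that $C_\xi$ satisfies (\ref{eq:MRLTPP2}).

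The hinge of the proof is the convolution identity for integrated survival functions. Applying Tonelli's theorem to $C_\xi(\mt,x)=\int_\R(y-x)^+\xi_\mt(dy)=\int_\R\!\int_\R(z+w-x)^+\mu_\mt(dz)f(w)\,dw$ (the integrand being dominated by $|z|+|w|+|x|$, which is integrable for $\mu_\mt\otimes(f\,dy)$) and recognising the inner integral as $C_\mu(\mt,x-w)$, one obtains
$$\forall\,(\mt,x)\in\R_+^2\times\R,\qquad C_\xi(\mt,x)=\int_\R C_\mu(\mt,x-w)f(w)\,dw=\int_\R C_\mu(\mt,z)\,f(x-z)\,dz,$$
where $C_\mu$ is the integrated survival function of $\left(\mu_\mt,\mt\in\R_+^2\right)$; these integrals are finite since $C_\mu(\mt,\cdot)$ grows at most affinely and $f$ has a finite first moment.

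It then remains to combine total positivity statements. Because $f$ is positive and log-concave, it is a P\'olya frequency function of order $2$, so $(z,x)\longmapsto f(x-z)$ is TP$_2$ on $\R\times\R$ (see \citep{Ka68}). On the other hand, since $\left(\mu_\mt,\mt\in\R_+^2\right)$ is a MRL process, Theorem~\ref{theo:MRLTPP2} together with Remark~\ref{rem:KemMRLTPP2} gives that $C_\mu$ is TP$_2$ in $(t,x)$ when $\tpr$ is fixed and TP$_2$ in $(\tpr,x)$ when $t$ is fixed. Fixing $\tpr$ and applying the basic composition formula (Proposition~\ref{prop:CompoMTP2} with $n=m=l=1$ and Lebesgue measure on the middle variable) to the TP$_2$ kernels $(t,z)\longmapsto C_\mu((t,\tpr),z)$ and $(z,x)\longmapsto f(x-z)$ shows that $(t,x)\longmapsto C_\xi((t,\tpr),x)$ is TP$_2$; arguing symmetrically with $t$ fixed shows that $(\tpr,x)\longmapsto C_\xi((t,\tpr),x)$ is TP$_2$. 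Hence $C_\xi$ satisfies (\ref{eq:MTPP2}), and since $C_\xi$ is an integrated survival function, the argument of Remark~\ref{rem:KemMRLTPP2} upgrades this to (\ref{eq:MRLTPP2}); Theorem~\ref{theo:MRLTPP2} then yields that $\left(\xi_\mt,\mt\in\R_+^2\right)$ is a MRL process.

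The only genuinely non-routine ingredient is the P\'olya-frequency property of $f$ (equivalently, the pairwise log-concavity inequality $f(x_1-z_1)f(x_2-z_2)\ge f(x_1-z_2)f(x_2-z_1)$ for $z_1\le z_2$, $x_1\le x_2$), which is classical; the rest is bookkeeping, the two points requiring some care being the Tonelli applications that justify the convolution identity and its finiteness, and the Borel-measurability of $z\longmapsto C_\mu(\mt,z)$, which is immediate since this map is convex. I expect the main obstacle, if any, to lie in presenting the convolution identity for $C_\xi$ cleanly, since everything downstream follows formally from Theorem~\ref{theo:MRLTPP2}, Remark~\ref{rem:KemMRLTPP2} and Proposition~\ref{prop:CompoMTP2}.
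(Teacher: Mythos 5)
Your proposal is correct and follows essentially the same route as the paper: both establish the convolution identity $C_\xi(\mt,x)=\int_\R C_\mu(\mt,z)f(x-z)\,dz$, use the log-concavity of $f$ to get the TP$_2$ property of $(z,x)\longmapsto f(x-z)$, and then invoke the Karlin--Rinott composition formula together with Theorem~\ref{theo:MRLTPP2} to conclude. The only cosmetic difference is that you work pairwise via (\ref{eq:MTPP2}) and Remark~\ref{rem:KemMRLTPP2}, whereas the paper applies Corollary~\ref{corol:CompoMTP2} directly to Condition~(\ref{eq:MRLTPP2}); the substance is identical.
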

\begin{proof}
Let $C_\mu$ and $C_\xi$ be the integrated survival functions of $\left(\mu_\mt,\mt\in\R_+^2\right)$ and $\left(\xi_\mt,\mt\in\R_+^2\right)$ respectively. By Fubini's theorem, we have:
\begin{align*}
C_{\xi}(\mt,x)&=\int_{\R}1_{[x,+\infty[}(v)(v-x)\xi_\mt(dv)=\int_{\R}1_{[x,+\infty[}(v)(v-x)\left(\int_{\R}f(v-y)\mu_\mt(dy)\right)dv\\
&=\int_{\R}\left(\int_{\R}1_{[x,+\infty[}(v)(v-x)f(v-y)dv\right)\mu_\mt(dy).
\end{align*}
By the change of variable $v-y=x-z$, we obtain:
\begin{align*}
C_{\xi}(\mt,x)&=\int_{\R}\left(\int_{\R}1_{]-\infty,y]}(z)(y-z)f(x-z)dz\right)\mu_\mt(dy)\\
&=\int_{\R}\left(\int_{\R}1_{[z,+\infty[}(y)(y-z)f(x-z)dz\right)\mu_\mt(dy)\\
&=\int_{\R}\left(\int_{\R}1_{[z,+\infty[}(y)(y-z)\mu_\mt(dy)\right)f(x-z)dz=
\int_{\R}C_\mu(\mt,z)f(x-z)dz.
\end{align*}
Since $f$ is log-concave, $(x,z)\longmapsto f(x-z)$ is TP$_2$ on $\R^2$. Moreover, as $\left(\mu_\mt,\mt\in\R_+^2\right)$ is a MRL process, then, by Theorem \ref{theo:MRLTPP2}, $C_\mu$ satisfies Condition (\ref{eq:MRLTPP2}). Hence, it follows from Corollary \ref{corol:CompoMTP2} that $C_\xi$ also satisfies Condition (\ref{eq:MRLTPP2}) meaning that $\left(\xi_\mt,\mt\in\R_+^2\right)$ is a MRL process.
\end{proof}
\begin{exa}
Let $\left(\mu_\mt,\mt\in\R_+^2\right)$ be the process defined by (\ref{eq:ExaMRLprocGnrle}). Then, for every log-concave density function $f$ which admits a finite first order moment, the process $\left(\xi_\mt,\mt\in\R_+^2\right)$ given by:
$$
\forall\,\mt\in\R_+^2,\quad\xi_\mt^\ve(dy)=\left(\int_{\R}f(y-z)\mu_\mt^\ve(dz)\right)dy
$$
is a MRL process.
\end{exa}
The preceding results are helpful to construct an associated two-parameter submartingale to certain non-MRL processes. We mention that there does not yet exist a counterpart of Kellerer's theorem for two-parameter processes. Recently, Juillet \citep{Ju16} proved that the Kellerer's theorem established in the one-parameter case does not extend to the two-parameter case.

\section{Construction of associated submartingales to a class of non-MRL processes}
We show that the Cox-Hobson algorithm yields an associated submartingale to certain non-MRL processes.
\subsection{Censoring transformed processes}
Let $\ve\in\R_+$ and let $\nu$ be an integrable probability measure. For every real number $r$ such that $\nu(]-\infty,r])$ and $\nu(]r,+\infty[)$ are positive, we wish to construct an associated submartingale to the process $\left(\mu^\ve_{\mt},\mt\in\R_+^2\right)$ given by: $\mu^\ve_{(0,0)}=\nu$ and, for every $\mt\in\R_+^2\setminus\{(0,0)\}$,
\begin{equation}\label{eq:ExaNonMRLMTP2}
\mu^\ve_\mt=(1_{]-\infty,r-t[}+1_{]r+\tpr,+\infty[})\nu+\alpha_\mt \delta_{r-t}+\beta_\mt^+\delta_{r+\tpr}+ \beta_\mt^- \delta_{r+(1+\ve)\tpr},
\end{equation}
where
\begin{align*}
&\beta_\mt^-=\frac{1}{t+\tpr}\int_{[r-t,r]}(y-r+t)\nu(dy),\text{ }\beta_\mt^+=\frac{1}{t+\tpr}\int_{]r,r+\tpr]}(y-r+t)\nu(dy),\\
&\alpha_\mt=\frac{1}{t+\tpr}\int_{[r-t,r+\tpr]}(r+\tpr-y)\nu(dy).
\end{align*}
\begin{rem}\text{}
\item[1.]The process $\left(\mu^\ve_\mt,\mt\in\R_+^2\right)$ given by (\ref{eq:ExaNonMRLMTP2}) is not MRL ordered in general. For instance, if $\ve=0=r$, the Hardy-Littlewood function of $\Psi_\mt^0$ of the process
\begin{equation}\label{eq:ExaNonMRLMuzero}
\mu_\mt^0=\left\{
\begin{array}{ll}
\nu&\text{if }\mt=(0,0),\\
(1_{]-\infty,-t[}+1_{]\tpr,+\infty})\nu+\alpha_\mt\delta_{-t}+\left(\beta^+_\mt+\beta^-_\mt\right)\delta_{\tpr}&\text{if }\mt\in\R_+^2\setminus\{(0,0)\}
\end{array}
\right.
\end{equation}
is given by: 
$$
\forall\,(\mt,x)\in\R_+^2\times\R,\quad\Psi_\mt^0(x)=\left\{
\begin{array}{ll}
\dfrac{\tpr C_\nu(-t)+tC_\nu(\tpr)}{C_\nu(-t)-C_\nu(\tpr)}&\text{if }(t,\tpr)\neq(0,0)\text{ and }-t<x\leq \tpr,\\
\Psi_\nu(x)&\text{otherwise,}
\end{array}
\right.
$$
where $C_\nu$ and $\Psi_\nu$ are the integrated survival and the Hardy-Littlewood functions of $\nu$ respectively. Take $0<t_1<t_2$, $\tpr\in\R_+$ and $x\in]-t_1,\tpr[$, and suppose that the survival function $\overline{\nu}$ of $\nu$ is continuous and decreasing. Then the function $t\longmapsto\Psi_{(t,\tpr)}^0(x)$ is decreasing on $[t_1,t_2]$ and, as a consequence, $\Psi^0_{(t_1,\tpr)}>\Psi_{(t_2,\tpr)}^0$. Indeed, $t\longmapsto\Psi_{(t,\tpr)}^0(x)$ is differentiable on $]t_1,t_2[$ and
\begin{align*}
\frac{\partial\Psi^0_{(t,\tpr)}(x)}{\partial t}&=\frac{C_\nu(\tpr)[C_\nu(-t)-C_\nu(\tpr)-(t+\tpr)\overline{\nu}(-t)]}{(C_\nu(-t)-C_\nu(\tpr))^2}\\
&=\frac{C_\nu(\tpr)}{(C_\nu(-t)-C_\nu(\tpr))^2}\left(\int_{-t}^{\tpr}\overline{\nu}(s)ds-(t+\tpr)\overline{\nu}(-t)\right)<0.
\end{align*}
\item[2.]Let $\left(B^{(\nu)}_u,u\geq0\right)$ be a standard Brownian motion such $B_0^{(\nu)}$ has law $\nu$, and let $\left(T_\mt,\mt\in\R_+^2\right)$ be the family of stopping times given by:
$$
\forall\,\mt\in\R_+^2,\quad T_\mt=\inf\left\{u\geq0,\,B_u^{(\nu)}\in]-\infty,-t]\cup[\tpr,+\infty[\right\}.
$$
Then $\left(B^{(\nu)}_{T_\mt},\mt\in\R_+^2\right)$ is a martingale associated to $\left(\mu_\mt^0,\mt\in\R_+^2\right)$ (see e.g.  \citep[Remark 4.2]{Ju16}). We provide another associated martingale to $\left(\mu_\mt^0,\mt\in\R_+^2\right)$ in Theorem \ref{theo:ExaNonMRL}.
\end{rem} 
One may observe that $\left(\mu_\mt^\ve,\mt\in\R_+^2\right)$ is a convex combination of processes obtained by censoring transformations. Indeed:
\begin{rem}
Consider the probability measures
$$
\nu^+=\frac{1}{\nu(]r,+\infty[)}1_{]r,+\infty[}\nu\quad\text{and}\quad \nu^-=\frac{1}{\nu(]-\infty,r])}1_{]-\infty,r]}\nu.
$$
\item[1.]We have
\begin{align*}
&\beta_\mt^-=\frac{\nu(]-\infty,r])}{t+\tpr}\int_{[r-t,r]}(y-r+t)\nu^-(dy),\text{ }\beta_\mt^+=\frac{\nu(]r,+\infty[)}{t+\tpr}\int_{]r,r+\tpr]}(y-r+t)\nu^+(dy),
\end{align*}
 $\alpha_\mt=\alpha^+_\mt+\alpha^-_\mt$, where
$$
\alpha^-_\mt=\frac{\nu(]-\infty,r])}{t+\tpr}\int_{[r-t,r]}(r+\tpr-y)\nu^-(dy) \text{ and } \alpha_\mt^+=\frac{\nu(]r,+\infty[)}{t+\tpr}\int_{]r,r+\tpr]}(r+\tpr-y)\nu^+(dy).
$$
\item[2.]Let $\left(\eta_\mt^\ve,\mt\in\R_+^2\right)$ and $\left(\sigma_\mt,\mt\in\R_+^2\right)$ be the processes given by:
\begin{equation}\label{eq:ExaNonMRLGleEta}
\left.
\begin{array}{ll}
&\forall\,t\in\R_+,\quad\eta_{(0,t)}^\ve=\nu^-\text{ and,}\\
&\forall\,\mt\in\R_+^\ast\times\R_+,\quad\eta_\mt^\ve=1_{]-\infty,r-t[}\nu^-+\alpha_\mt^{\star-}\delta_{r-t}+\beta_\mt^{\star-}\delta_{r+(1+\ve)\tpr}
\end{array}
\right\}
\end{equation}
and
\begin{equation}\label{eq:ExaNonMRLGleSigma}
\left.
\begin{array}{ll}
&\forall\,t\in\R_+,\quad\sigma_{(t,0)}=\nu^+\text{ and,}\\
&\forall\,\mt\in\R_+\times\R_+^\ast,\quad\sigma_\mt= \alpha_\mt^{\star+}\delta_{r-t}+\beta_\mt^{\star+}\delta_{r+\tpr}+1_{]r+\tpr,+\infty[}\nu^+
\end{array}
\right\}
\end{equation}
respectively, where
$$
\alpha_\mt^{\star-}=\frac{\alpha_\mt^-}{\nu(]-\infty,r])},\quad \alpha_\mt^{\star+}=\frac{\alpha_\mt^+}{\nu(]r,+\infty[)}, \quad \beta_\mt^{\star-}=\frac{\beta_\mt^-}{\nu(]-\infty,r])} \text{ and }\beta_\mt^{\star+}=\frac{\beta_\mt^+}{\nu(]r,+\infty[)}.
$$
Then, for every $\mt\in\R_+^2$, one has
$$
\mu_\mt^\ve=\nu(]-\infty,r])\eta_\mt^\ve+\nu(]r,+\infty[)\sigma_\mt.
$$
\end{rem}
\begin{prop}\label{prop:ExaNonMRLicx}
The process $\left(\mu^\ve_\mt,\mt\in\R_+^2\right)$ is ordered by the increasing convex dominance.
\end{prop}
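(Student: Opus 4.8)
The plan is to use the stop-loss characterisation of the increasing convex order: a family of integrable probability measures $(\rho_\mt,\mt\in\R_+^2)$ is ordered by the increasing convex dominance if and only if its integrated survival function is pointwise non-decreasing in $\mt$, i.e. $C_{\rho_\ms}(x)\leq C_{\rho_\mt}(x)$ for all $\ms\leq\mt$ and all $x\in\R$. So it suffices to prove this monotonicity for $C_{\mu^\ve}$. The first step is to invoke the decomposition $\mu_\mt^\ve=\nu(]-\infty,r])\,\eta_\mt^\ve+\nu(]r,+\infty[)\,\sigma_\mt$ from the preceding Remark: the integrated survival function is linear under mixtures, so $C_{\mu_\mt^\ve}=\nu(]-\infty,r])\,C_{\eta_\mt^\ve}+\nu(]r,+\infty[)\,C_{\sigma_\mt}$ with nonnegative coefficients, and it is enough to show that each of $(\eta_\mt^\ve)$ and $(\sigma_\mt)$ is ordered by the increasing convex dominance.

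For $(\eta_\mt^\ve)$, note that it is exactly the process \eqref{eq:ExaMTP2ModifNu} associated with the base measure $\nu^-$ (with $r$ serving as an upper bound --- not necessarily the least one --- of the support of $\nu^-$, which changes nothing in the proof of Proposition \ref{prop:ExaMTP2ModifNu}). Hence $C_{\eta^\ve}$ is MTP$_2$, in particular TP$_2$ in each pair of variables, so by Remark \ref{rem:KemMRLTPP2} and Theorem \ref{theo:MRLTPP2} the process $(\eta_\mt^\ve)$ is MRL, and --- as recalled in the Introduction, or directly from \eqref{eq:MRLTPP2} by dividing by $C_\mu(\mt_1,x_1)$ and letting $x_1\to-\infty$, using $C_\mu(\mt,x)=m_{\mu_\mt}-x+o(1)$ --- every MRL process is ordered by the increasing convex dominance. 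Thus $\mt\mapsto C_{\eta_\mt^\ve}(x)$ is non-decreasing for each $x$.

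For $(\sigma_\mt)$ this route fails: the process $(\sigma_\mt)$ is in general \emph{not} MRL (spreading the part of $\nu^+$ lying just above $r$ makes the Hardy--Littlewood functions $\Psi_{\sigma_\mt}$ cross), so a reflection device is needed. I would set $R(y)=2r-y$, $\hat\nu^+=R_{\#}\nu^+$ (an integrable probability measure whose support is bounded above by $r$) and $\hat\sigma_\mt=R_{\#}\sigma_\mt$. A short computation carrying the weights $\alpha_\mt^{\star+},\beta_\mt^{\star+}$ through the change of variable $y\mapsto 2r-y$ identifies $(\hat\sigma_\mt)$ with a process of the form \eqref{eq:ExaMRLprocGnrle} built from the constant one-parameter MRL process $\nu_t\equiv\hat\nu^+$, with $\phi(\mt)=r-\tpr$ (non-increasing) and $\varphi(\mt)=r+t$ (non-decreasing and $\geq r\geq r_{\hat\nu^+}$); by Proposition \ref{prop:MRLprocGnrle}, $(\hat\sigma_\mt)$ is MRL, hence ordered by the increasing convex dominance, i.e. $C_{\hat\sigma_\ms}(w)\leq C_{\hat\sigma_\mt}(w)$ for all $\ms\leq\mt$, $w\in\R$. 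To transfer this to $(\sigma_\mt)$, the substitution $z=2r-y$ gives $C_{\sigma_\mt}(x)=\int_{]-\infty,2r-x]}\bigl((2r-x)-z\bigr)\hat\sigma_\mt(dz)$; combining the elementary identity $\int_{]-\infty,w]}(w-z)\rho(dz)=C_\rho(w)+w-m_\rho$ (valid for every integrable probability measure $\rho$) with $m_{\hat\sigma_\mt}=2r-m_{\sigma_\mt}=2r-m_{\nu^+}$ --- which is independent of $\mt$, since each $\sigma_\mt$ has mean $m_{\nu^+}$ by the very choice of $\alpha_\mt^{\star+},\beta_\mt^{\star+}$ --- yields $C_{\sigma_\mt}(x)=C_{\hat\sigma_\mt}(2r-x)+m_{\nu^+}-x$. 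Hence $\mt\mapsto C_{\sigma_\mt}(x)$ is non-decreasing for each $x$, and together with the previous paragraph this proves the proposition.

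The main obstacle is the process $(\sigma_\mt)$: since it is genuinely not MRL, the ``MRL $\Rightarrow$ increasing convex dominance'' shortcut that settles $(\eta_\mt^\ve)$ is unavailable, and the reflection --- which swaps upper and lower tails and so realises $\sigma$ as (the reflection of) a true process of type \eqref{eq:ExaMRLprocGnrle} --- is the crux. The remaining points are routine: verifying that the reflected weights indeed coincide with the $\alpha_\mt,\beta_\mt$ prescribed in \eqref{eq:ExaMRLprocGnrle}, and dealing with the degenerate cases $t=0$, $\tpr=0$ and with the origin normalisation of Proposition \ref{prop:MRLprocGnrle} (which $\hat\sigma$ meets only up to its value at $(0,0)$, where one simply redoes the one-line computation of the Hardy--Littlewood functions). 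Should the reflection prove unwieldy, an alternative is to check the monotonicity of $\mt\mapsto C_{\sigma_\mt}(x)$ by hand, writing $C_{\sigma_\mt}(x)$ explicitly on the three regions $x\leq r-t$, $r-t<x\leq r+\tpr$, $x>r+\tpr$ and exploiting the convexity of $C_{\nu^+}$, but that region-by-region comparison is appreciably longer.
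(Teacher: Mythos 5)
Your proof is correct and follows essentially the same route as the paper: the same decomposition $\mu^\ve_\mt=\nu(]-\infty,r])\,\eta^\ve_\mt+\nu(]r,+\infty[)\,\sigma_\mt$, the MRL property of $(\eta^\ve_\mt)$ via Proposition \ref{prop:ExaMTP2ModifNu}, and a reflection of $(\sigma_\mt)$ into an MRL process followed by the constant-mean argument to pass from decreasing to increasing convex dominance. The only (immaterial) differences are that the paper reflects by $y\mapsto -y$ and recognises $h_{\#}\sigma_\mt$ as an instance of Proposition \ref{prop:ExaMTP2ModifNu} with $\ve=0$ and the roles of $t$ and $\tpr$ swapped, whereas you reflect by $y\mapsto 2r-y$ and invoke Proposition \ref{prop:MRLprocGnrle}.
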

\begin{proof}
It suffices to show that $\left(\eta_\mt^\ve,\mt\in\R_+^2\right)$ and $\left(\sigma_\mt,\mt\in\R_+^2\right)$ are ordered by the increasing convex dominance. It follows from Proposition \ref{prop:ExaMTP2ModifNu} that $\left(\eta_\mt^\ve,\mt\in\R_+^2\right)$ is a MRL process which, according to Theorem 4.A.26 in \citep{ShS07}, implies that it is ordered by the increasing convex dominance. Moreover, if $h_{\#}\nu^{+}$ and $h_{\#}\sigma_\mt$ denote the image of $\nu^+$ and $\sigma_\mt$ respectively under $h:\,x\longmapsto-x$, then, by Proposition \ref{prop:ExaMTP2ModifNu}, the process $\left(h_{\#}\sigma_\mt,\mt\in\R_+^2\right)$ given by
\begin{equation}\label{eq:MRLTwoDhdiez}
\left.
\begin{array}{ll}
&\forall\,t\in\R_+,\quad h_{\#}\sigma_{(t,0)}=h_{\#}\nu^{+}\text{ and, }\\
&\forall\,\mt\in\R_+\times\R_+^\ast,\quad h_{\#}\sigma_\mt=1_{]-\infty,-r-\tpr[}h_{\#}\nu^{+}+\beta^{\star+}_\mt\delta_{-r-\tpr}+\alpha^{\star+}\delta_{-r+t},
\end{array}
\right\}
\end{equation}
is MRL ordered. Then  $\left(h_{\#}\sigma_\mt,\mt\in\R_+^2\right)$ is also ordered by the increasing convex dominance (see Theorem 4.A.26 in  \citep{ShS07}). This means that $\left(\sigma_\mt,\mt\in\R_+^2\right)$ is ordered by the decreasing convex dominance. Hence, since $\left(\sigma_\mt,\mt\in\R_+^2\right)$ has constant mean, $\left(\sigma_\mt,\mt\in\R_+^2\right)$ is also ordered by the increasing convex dominance.
\end{proof}
Since Kellerer's theorem fails in the two-parameter case (see  \citep[Theorem 2.2]{Ju16}), Proposition \ref{prop:ExaNonMRLicx} is not sufficient for the existence of an associated submartingale to $\left(\mu_\mt^\ve,\mt\in\R_+^2\right)$. Moreover, $\left(\mu_\mt^\ve,\mt\in\R_+^2\right)$ is not MRL ordered and the Cox-Hobson algorithm does not apply. Nevertheless, $\left(\mu_\mt^\ve,\mt\in\R_+^2\right)$ is a convex combination of two processes to which  the Cox-Hobson algorithm provides associated  submartingales. Because a convex combination of submartingale measures is still a submartingale measure, we deduce that $\left(\mu_\mt^\ve,\mt\in\R_+^2\right)$ is associated to a submartingale.
\begin{theorem}\label{theo:ExaNonMRL}
The Cox-Hobson algorithm allows to construct an associated submartingale to the process $\left(\mu_\mt^\ve,\mt\in\R_+^2\right)$.
\end{theorem}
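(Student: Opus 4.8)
The plan is to realise the desired submartingale as a randomised mixture of two Cox--Hobson submartingales, using the decomposition $\mu_\mt^\ve=\nu(]-\infty,r])\,\eta_\mt^\ve+\nu(]r,+\infty[)\,\sigma_\mt$ recorded in the remark above, together with the fact that a convex combination of submartingale laws is again a submartingale law. Neither $\left(\mu_\mt^\ve\right)$ nor $\left(\sigma_\mt\right)$ is MRL ordered, so the Cox--Hobson algorithm does not apply to them directly; the point is that it applies to the two building blocks, after reflecting one of them.

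First I would treat $\left(\eta_\mt^\ve\right)$. By Proposition~\ref{prop:ExaMTP2ModifNu} --- as already invoked in the proof of Proposition~\ref{prop:ExaNonMRLicx} --- it is a MRL process, so by Remark~\ref{rem:MRLTwoDSubMart} the Cox--Hobson algorithm, after translating by a constant if some marginal has negative mean, produces from a standard Brownian motion $B^{(1)}$ a process $\left(Y_\mt,\mt\in\R_+^2\right)$ which is a submartingale with one-dimensional marginals $\left(\eta_\mt^\ve\right)$; projecting onto the natural filtration $\left(\mathcal{F}_\mt^Y\right)$ of $Y$ it stays a submartingale. Next I would treat $\left(\sigma_\mt\right)$, which is not MRL but whose reflection $\left(h_{\#}\sigma_\mt\right)$, $h(x)=-x$, is a MRL process (again Proposition~\ref{prop:ExaMTP2ModifNu}, as in the proof of Proposition~\ref{prop:ExaNonMRLicx}). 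Applying Cox--Hobson to $\left(h_{\#}\sigma_\mt\right)$ gives, from a second Brownian motion $B^{(2)}$, a submartingale $\left(W_\mt\right)$ with marginals $\left(h_{\#}\sigma_\mt\right)$; since $\left(\sigma_\mt\right)$, hence $\left(h_{\#}\sigma_\mt\right)$, has constant mean, $\left(W_\mt\right)$ is in fact a martingale (a constant-mean submartingale is a martingale), so $Z_\mt:=-W_\mt$ is a martingale with marginals $\left(\sigma_\mt\right)$, and a martingale for its natural filtration $\left(\mathcal{F}_\mt^Z\right)$.

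Then I would glue the pieces together on a product probability space carrying $Y$ and $Z$ as independent processes together with an independent Bernoulli variable $U$ with $\Pb(U=1)=\nu(]-\infty,r])$, and set $X_\mt:=1_{\{U=1\}}Y_\mt+1_{\{U=0\}}Z_\mt$, whose one-dimensional law is exactly $\mu_\mt^\ve$. With the filtration $\mathcal{G}_\mt:=\sigma(U)\vee\mathcal{F}_\mt^Y\vee\mathcal{F}_\mt^Z$, the process $X$ is adapted and, for $\ms\leq\mt$, conditioning on $U$ first and using the independence of the three ingredients,
$$
\E\bigl[X_\mt\bigm|\mathcal{G}_\ms\bigr]=1_{\{U=1\}}\,\E\bigl[Y_\mt\bigm|\mathcal{F}_\ms^Y\bigr]+1_{\{U=0\}}\,\E\bigl[Z_\mt\bigm|\mathcal{F}_\ms^Z\bigr]\geq 1_{\{U=1\}}Y_\ms+1_{\{U=0\}}Z_\ms=X_\ms ,
$$
so $\left(X_\mt\right)$ is a $\left(\mathcal{G}_\mt\right)$-submartingale associated to $\left(\mu_\mt^\ve\right)$.

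The heavy lifting is already done in Propositions~\ref{prop:ExaMTP2ModifNu} and \ref{prop:ExaNonMRLicx}, so the remaining difficulty is purely organisational. The two points to be careful about are: arranging the two Cox--Hobson processes and the switch $U$ on a common product space so that conditioning on $U$ legitimately splits the conditional expectation above; and noticing that the constant-mean property of $\left(h_{\#}\sigma_\mt\right)$ is indispensable, since it upgrades its Cox--Hobson submartingale to a martingale, which is exactly what keeps $Z=-W$ a (sub)martingale rather than a supermartingale.
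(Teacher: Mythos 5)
Your proposal is correct and follows essentially the same route as the paper: decompose $\mu_\mt^\ve$ as the convex combination $\nu(]-\infty,r])\,\eta_\mt^\ve+\nu(]r,+\infty[)\,\sigma_\mt$, apply Cox--Hobson to the MRL process $\left(\eta_\mt^\ve\right)$ and to the reflected MRL process $\left(h_{\#}\sigma_\mt\right)$ (using the constant mean to upgrade the latter to a martingale before reflecting back), and mix the two via an independent Bernoulli switch. The only cosmetic difference is that you verify the submartingale property by conditioning on an explicitly enlarged filtration, while the paper checks the equivalent test-function criterion.
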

\begin{proof}
It suffices to show that the Cox-Hobson algorithm yields an associated submartingale to each of $\left(\eta_\mt^\ve,\mt\in\R_+^2\right)$
 and $\left(\sigma_\mt,\mt\in\R_+^2\right)$. Indeed, if $\left(M_\mt^{\eta,\ve},\mt\in\R_+^2\right)$ and $\left(M_\mt^{\sigma},\mt\in\R_+^2\right)$ denote the submartingles associated to $\left(\eta_\mt^\ve,\mt\in\R_+^2\right)$ and $\left(\sigma_\mt,\mt\in\R_+^2\right)$ respectively, and if $Y$ denotes a Bernoulli random variable independent of $\left(M_\mt^{\eta,\ve},\mt\in\R_+^2\right)$ and $\left(M_\mt^{\sigma},\mt\in\R_+^2\right)$ such that 
$$
\Pb(Y=1)=\nu(]-\infty,r])=1-\Pb(Y=0),
$$
then the process $\left(X_\mt^\ve,\mt\in\R_+^2\right)$ given by:
$$
\forall\,\mt\in\R_+^2,\quad X_\mt^\ve=YM_\mt^{\eta,\ve}+(1-Y)M_\mt^{\sigma}
$$
is associated to $\left(\mu_\mt^\ve,\mt\in\R_+^2\right)$ and, for every positive integer $n$, every $\ms_1\leq\cdots\leq\ms_n\leq\ms\leq\mt$ elements of $\R_+^2$ and every continuous bounded function $\Phi:\,\R^{n+1}\to\R$,
\begin{align*}
&\E\left[\Phi\left(X_{\ms_1}^\ve,\cdots,X_{\ms_n}^\ve,X_{\ms}^\ve\right)\left(X_\mt^\ve-X_\ms^\ve\right)\right]\\
&=\nu(]-\infty,r])\E\left[\Phi\left(M_{\ms_1}^{\eta,\ve},\cdots,M_{\ms_n}^{\eta,\ve},M_{\ms}^{\eta,\ve}\right)\left(M_\mt^{\eta,\ve}-M_\ms^{\eta,\ve}\right)\right]\\
&\quad+\nu(]r,+\infty[)\E\left[\Phi\left(M_{\ms_1}^{\sigma},\cdots,M_{\ms_n}^{\sigma},M_{\ms}^{\sigma}\right)\left(M_\mt^{\sigma}-M_\ms^{\sigma}\right)\right]\geq0
\end{align*}
which shows that $\left(X_\mt^\ve,\mt\in\R_+^2\right)$ is a submartingale.

Since $\left(\eta_\mt^\ve,\mt\in\R_+^2\right)$ is a MRL process, the Cox-Hobson algorithm provides an associated submartingale to it (see Remark \ref{rem:MRLTwoDSubMart}). Moreover, as the process $\left(h_{\#}\sigma_\mt,\mt\in\R_+^2\right)$, defined by (\ref{eq:MRLTwoDhdiez}), is MRL ordered, the Cox-Hobson algorithm also yields an associated submartingale $\left(M_\mt^{h_{\#}\sigma},\mt\in\R_+^2\right)$ to $\left(h_{\#}\sigma_\mt,\mt\in\R_+^2\right)$. Since $\left(h_{\#}\sigma_\mt,\mt\in\R_+^2\right)$ has constant mean, $\left(M_\mt^{h_{\#}\sigma},\mt\in\R_+^2\right)$ has also constant mean, and then it is a martingale. It remains to observe that $\left(-M_\mt^{h_{\#}\sigma},\mt\in\R_+^2\right)$ is a martingale associated to 
$\left(\sigma_\mt,\mt\in\R_+^2\right)$.
\end{proof}
\begin{rem}
The process $\left(\mu^0_\mt,\mt\in\R_+^2\right)$ given by (\ref{eq:ExaNonMRLMuzero}) is ordered by the convex dominance, and, by Theorem \ref{theo:ExaNonMRL}, the Cox-Hobson algorithm provides an associated martingale.
\end{rem}

\subsection{Processes obtained by subordination}
Now, we apply Theorem \ref{theo:MTP2TailSub} to exhibit other non-MRL processes each of which is associated to a submartingale.
\begin{theorem}
Let $\left(\mu_{(\lambda,\lambda')},(\lambda,\lambda')\in\R_+^2\right)$ be the process defined by (\ref{eq:ExaNonMRLMTP2}).
\item[1.]Let $\left(p_t,t\in\R_+^\ast\right)$ and $\left(q_t,t\in\R_+^\ast\right)$ be the transition densities of two right-continuous, integrable and time-homogeneous $\R_+$-valued Markov processes issued from zero. Suppose that, for every $t\in\R_+$, $p_t$ and $q_t$ satisfy (\ref{eq:TP2MarkovCS}). Then the Cox-Hobson algorithm allows to associate a submartingale to the process $\left(\zeta_\mt^\ve,\mt\in(\R_+^\ast)^2\right)$ given by:
$$
\forall\,(\mt,x)\in(\R_+^\ast)^2\times\R,\quad\zeta_\mt^\ve([x,+\infty[)=\int\!\!\!\int_{\R_+^2}\mu^\ve_{(\lambda,\lambda')}([x,+\infty[)p_t(0,\lambda)q_{\tpr}(0,\lambda')d\lambda d\lambda'.
$$
\item[2.] Let $\left(P_t,t\in\R_+\right)$ and $\left(Q_t,t\in\R_+\right)$ be the transition matrices of two right-continuous, integrable and time-homogeneous $\R_+$-valued Markov chain started at zero. If, for every $t\in\R_+$, $P_t$ and $Q_t$ fulfill Condition (\ref{eq:TP2MarkovChainCS}), then the Cox-Hobson algorithm yields an associated submartingale to the process $\left(\xi_\mt^\ve,\mt\in\R_+^2\right)$ defined by:
$$
\forall\,(\mt,x)\in\R_+^2\times\R,\quad\xi_\mt^\ve([x,+\infty[)=\sum\limits_{i\in\N}\sum\limits_{j\in\N}\mu_{(i,j)}^\ve([x,+\infty[)P_t(0,i)Q_{\tpr}(0,j).
$$
\end{theorem}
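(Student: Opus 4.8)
The plan is to deduce the statement from the results already in hand by exploiting the linearity of the subordination operation together with the convex decomposition
$\mu_\mt^\ve=\nu(]-\infty,r])\,\eta_\mt^\ve+\nu(]r,+\infty[)\,\sigma_\mt$
recorded in the remark preceding Proposition \ref{prop:ExaNonMRLicx}. Since the kernels $(\lambda,\lambda')\mapsto p_t(0,\lambda)q_{\tpr}(0,\lambda')$ (resp. $(i,j)\mapsto P_t(0,i)Q_{\tpr}(0,j)$) are nonnegative, the operator sending an integrable process $\rho$ to its subordinated process $\rho^{\mathrm{sub}}$, with $\rho^{\mathrm{sub}}_\mt([x,+\infty[)=\int\!\!\int_{\R_+^2}\rho_{(\lambda,\lambda')}([x,+\infty[)p_t(0,\lambda)q_{\tpr}(0,\lambda')\,d\lambda\,d\lambda'$, is linear and commutes with convex combinations; hence $\zeta_\mt^\ve=\nu(]-\infty,r])\,\eta^{\ve,\mathrm{sub}}_\mt+\nu(]r,+\infty[)\,\sigma^{\mathrm{sub}}_\mt$ (and similarly for $\xi^\ve_\mt$ with sums). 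So it suffices to exhibit an associated submartingale to each of $\eta^{\ve,\mathrm{sub}}$ and $\sigma^{\mathrm{sub}}$, and then to glue them with an independent Bernoulli variable $Y$, $\Pb(Y=1)=\nu(]-\infty,r])$, exactly as in the proof of Theorem \ref{theo:ExaNonMRL}.

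For $\eta^{\ve,\mathrm{sub}}$: the process $\left(\eta_\mt^\ve,\mt\in\R_+^2\right)$ of (\ref{eq:ExaNonMRLGleEta}) is of the form (\ref{eq:ExaMTP2ModifNu}) with $\nu$ replaced by $\nu^-$, so by Proposition \ref{prop:ExaMTP2ModifNu} it has MTP$_2$ integrated survival function, and it satisfies the growth bound (\ref{eq:MTP2TailIntCond}) by the same elementary estimate as in the example following Theorem \ref{theo:MTP2TailSub} (the atoms sit at $r-t$ and $r+(1+\ve)\tpr$, and the absolutely continuous part is dominated by $\nu^-$), with $K=\max\{2,|r|+\int_{]-\infty,r]}|y|\nu^-(dy)\}$. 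Theorem \ref{theo:MTP2TailSub} then yields that $\eta^{\ve,\mathrm{sub}}$ is integrable with MTP$_2$ integrated survival function; in particular it is TP$_2$ in each pair of variables when the third is fixed, hence, by Remark \ref{rem:KemMRLTPP2} and Theorem \ref{theo:MRLTPP2}, it is a MRL process, and Remark \ref{rem:MRLTwoDSubMart} gives an associated submartingale $M^{\eta,\ve}$ through the Cox-Hobson algorithm.

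For $\sigma^{\mathrm{sub}}$: the process $\left(\sigma_\mt\right)$ is not MRL, but its image $\left(h_{\#}\sigma_\mt\right)$ under $h:x\mapsto -x$ is of the form (\ref{eq:ExaMTP2ModifNu}) — this is precisely (\ref{eq:MRLTwoDhdiez}) — so Proposition \ref{prop:ExaMTP2ModifNu} applies to it and it also fulfils (\ref{eq:MTP2TailIntCond}). One checks that subordination commutes with $h$, i.e. $h_{\#}(\sigma^{\mathrm{sub}}_\mt)$ is the subordination of $\left(h_{\#}\sigma_{(\lambda,\lambda')}\right)$, since $h_{\#}(\sigma^{\mathrm{sub}}_\mt)([x,+\infty[)=\sigma^{\mathrm{sub}}_\mt(]-\infty,-x])=\int\!\!\int_{\R_+^2}\sigma_{(\lambda,\lambda')}(]-\infty,-x])p_t(0,\lambda)q_{\tpr}(0,\lambda')\,d\lambda\,d\lambda'=\int\!\!\int_{\R_+^2}h_{\#}\sigma_{(\lambda,\lambda')}([x,+\infty[)p_t(0,\lambda)q_{\tpr}(0,\lambda')\,d\lambda\,d\lambda'$. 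Hence, by Theorem \ref{theo:MTP2TailSub}, $h_{\#}(\sigma^{\mathrm{sub}})$ has MTP$_2$ integrated survival function, so it is a MRL process, and the Cox-Hobson algorithm yields an associated submartingale $M^{h_{\#}\sigma}$ (using Remark \ref{rem:MRLTwoDSubMart} to cover the sign of the mean). Since subordination by transition kernels preserves the mean ($\int\!\!\int_{\R_+^2}p_t(0,\lambda)q_{\tpr}(0,\lambda')\,d\lambda\,d\lambda'=1$) and $h_{\#}\sigma$ has constant mean, $M^{h_{\#}\sigma}$ has constant mean and is therefore a martingale; thus $-M^{h_{\#}\sigma}$ is a martingale associated to $\sigma^{\mathrm{sub}}$. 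Gluing $M^{\eta,\ve}$ and $-M^{h_{\#}\sigma}$ via $Y$, i.e. $X_\mt^\ve=YM_\mt^{\eta,\ve}-(1-Y)M_\mt^{h_{\#}\sigma}$, produces a submartingale associated to $\zeta^\ve$ by the computation in Theorem \ref{theo:ExaNonMRL}. The argument for $\xi^\ve$ in Point 2 is identical, with integrals replaced by sums, $p_t,q_t$ by $P_t,Q_t$, and Point 1 of Theorem \ref{theo:MTP2TailSub} by Point 2.

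I expect the only mildly delicate points to be (i) the verification that subordination commutes with the reflection $h$ and preserves the mean, and (ii) the growth estimate (\ref{eq:MTP2TailIntCond}) for $\eta^\ve$ and for $h_{\#}\sigma$; everything else is a bookkeeping assembly of Proposition \ref{prop:ExaMTP2ModifNu}, Theorem \ref{theo:MTP2TailSub}, Remark \ref{rem:MRLTwoDSubMart}, and the Bernoulli-mixing trick of Theorem \ref{theo:ExaNonMRL}.
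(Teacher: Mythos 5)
Your proposal is correct and follows essentially the same route as the paper: decompose $\zeta^\ve_\mt=\nu(]-\infty,r])\widehat{\eta}^\ve_\mt+\nu(]r,+\infty[)\widehat{\sigma}_\mt$ by linearity of subordination, apply Theorem \ref{theo:MTP2TailSub} to $\widehat{\eta}^\ve$ and to $h_{\#}\widehat{\sigma}$ (after checking that subordination commutes with the reflection $h$), and glue the resulting submartingale and martingale with an independent Bernoulli variable as in Theorem \ref{theo:ExaNonMRL}. The only difference is that you spell out details the paper leaves implicit (the commutation with $h$, the mean preservation, and the growth bound (\ref{eq:MTP2TailIntCond})), which is a welcome addition rather than a deviation.
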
 
\begin{proof}
We prove only Point 1 since the proof of Point 2 relies on the same arguments. Let $\left(\eta_\mt^\ve,\mt\in\R_+^2\right)$ and $\left(\sigma_\mt,\mt\in\R_+^2\right)$ be the processes defined by (\ref{eq:ExaNonMRLGleEta}) and (\ref{eq:ExaNonMRLGleSigma}) respectively. Let $\left(\widehat{\eta}_\mt^\ve,\mt\in(\R_+^\ast)^2\right)$ and $\left(\widehat{\sigma}_\mt,\mt\in(\R_+^\ast)^2\right)$ be the processes given, for every $\mt\in(\R_+^\ast)^2$ and $x\in\R$ by:
$$
\widehat{\eta}_\mt^\ve([x,+\infty[)=\int\!\!\!\int_{\R_+^2}\eta^\ve_{(\lambda,\lambda')}([x,+\infty[)p_t(0,\lambda)q_{\tpr}(0,\lambda')d\lambda d\lambda'
$$
and
$$
\widehat{\sigma}_\mt([x,+\infty[)=\int\!\!\!\int_{\R_+^2}\sigma_{(\lambda,\lambda')}([x,+\infty[)p_t(0,\lambda)q_{\tpr}(0,\lambda')d\lambda d\lambda'
$$
respectively. Then
$$
\zeta^\ve_\mt=\nu(]-\infty,r])\widehat{\eta}_\mt^\ve+\nu(]r,+\infty[)\widehat{\sigma}_\mt.
$$
The existence of an associated submartingale $\left(M_\mt^{\widehat{\eta},\ve},\mt\in(\R_+^\ast)^2\right)$ to $\left(\widehat{\eta}_\mt^\ve,\mt\in(\R_+^\ast)^2\right)$ is deduced from Point 1 of Theorem \ref{theo:MTP2TailSub}. On the other hand, the image $h_{\#}\widehat{\sigma}_\mt$ of $\widehat{\sigma}_\mt$ under $h:\,y\longmapsto -y$ is given by:
\begin{align*}
\forall\,x\in\R,\quad h_{\#}\widehat{\sigma}_{\mt}([x,+\infty[)=\int\!\!\!\int_{\R_+^2}h_{\#}\sigma_{(\lambda,\lambda')}([x,+\infty[)p_t(0,\lambda)q_{\tpr}(0,\lambda')\,d\lambda\,d\lambda',
\end{align*}
where $h_{\#}\sigma_{(\lambda,\lambda')}$ is the image of $\sigma_{(\lambda,\lambda')}$ under $h$. Since $\left(h_{\#}\sigma_\mt,\mt\in\R_+^2\right)$ has a MTP$_2$ integrated 
survival function, it follows from Point 1 of Theorem \ref{theo:MTP2TailSub} that $\left(h_{\#}\widehat{\sigma}_\mt,\mt\in\R_+^2\right)$ has also MTP$_2$ integrated survival function. In particular, $\left(h_{\#}\widehat{\sigma}_\mt,\mt\in\R_+^2\right)$ is a MRL process. Hence, the Cox-Hobson algorithm provides an associated martingale $\left(M_\mt^{h_{\#}\widehat{\sigma}},\mt\in(\R_+^\ast)^2\right)$ to $\left(h_{\#}\widehat{\sigma}_\mt,\mt\in(\R_+^\ast)^2\right)$. Then, as in the proof of Theorem \ref{theo:ExaNonMRL}, we deduce that the Cox-Hobson algorithm yields an associated submartingale to $\left(\zeta^\ve_\mt,\mt\in(\R_+^\ast)^2\right)$.
\end{proof}

\subsection{Processes obtained by convolution }
Using the Cox-Hobson algorithm, one may also associated submartingales to certain non-MRL processes obtained by convolution. For instance, if $\left(\mu_\mt^\ve,\mt\in\R_+^2\right)$ is the process defined by (\ref{eq:ExaNonMRLMTP2}) and if $f$ is a Lebesgue-integrable log-concave and positive density function, then one may associate a submartingale to the process
$$
\chi^\ve_\mt(dy)=\left(\int_{\R}f(y-z)\mu^\ve_\mt(dz)\right)dy.
$$
Indeed, for every $\mt\in\R_+^2$, $\mu_\mt^\ve=\nu(]-\infty,r])\eta_\mt^\ve+\nu(]r,+\infty[)\sigma_\mt$,
where the processes $\left(\eta_\mt^\ve,\mt\in\R_+^2\right)$ and $\left(\sigma_\mt,\mt\in\R_+^2\right)$ are defined by (\ref{eq:ExaNonMRLGleEta}) and (\ref{eq:ExaNonMRLGleSigma}) respectively. Hence,
\[
\chi_\mt^\ve=\nu(]-\infty,r])\chi_\mt^{\eta,\ve}+\nu(]r,+\infty[)\chi_\mt^{\sigma},
\]
where
\begin{align*}
\chi^{\eta,\ve}_\mt(dy)=\left(\int_{\R}f(y-z)\eta^\ve_\mt(dz)\right)dy
\text{ and }
\chi^{\sigma}_\mt(dy)=\left(\int_{\R}f(y-z)\sigma_\mt(dz)\right)dy.
\end{align*}
Then, applying Proposition \ref{prop:ExaMRLMTPConv} and the Cox-Hobson algorithm, one may construct an associated submartingale to  $\left(\chi^{\eta,\ve}_\mt,\mt\in\R_+^2\right)$ and an associated martingale 
to $\left(\chi^{\sigma}_\mt,\mt\in\R_+^2\right)$.
\noindent
\paragraph*{Acknowledgements}We are grateful to the anonymous referee for a careful reading and valuable  comments that led to a substantial improvement of the paper.


\end{document}